\def \wideubar{\underaccent{{\cc@style\underline{\mskip15mu}}}}
\def \widebar{\accentset{{\cc@style\underline{\mskip10mu}}}}
\definecolor{blue}{rgb}{0,0,0.9}
\definecolor{red}{rgb}{0.9,0,0}
\definecolor{green}{rgb}{0,0.9,0}
\definecolor{lightgreen}{rgb}{0.1,0.5,0.1}
\newcommand{\blue}[1]{\begin{color}{blue}#1\end{color}}
\begin{document}

\newtheorem{property}{Property}[section]
\newtheorem{proposition}{Proposition}[section]
\newtheorem{append}{Appendix}[section]
\newtheorem{definition}{Definition}[section]
\newtheorem{lemma}{Lemma}[section]
\newtheorem{corollary}{Corollary}[section]
\newtheorem{theorem}{Theorem}[section]
\newtheorem{remark}{Remark}[section]
\newtheorem{problem}{Problem}[section]
\newtheorem{example}{Example}[section]
\newtheorem{assumption}{Assumption}
\renewcommand*{\theassumption}{\Alph{assumption}}

\title{ripALM: A Relative-Type Inexact Proximal Augmented Lagrangian Method for Linearly Constrained Convex Optimization}\unmarkedfntext{\hspace{-1mm}The first two authors contributed equally.}

\author{
Jiayi Zhu\thanks{School of Computer Science and Engineering, Sun Yat-sen University ({\tt zhujy86@mail2.sysu.edu.cn}).}, ~~
Ling Liang\thanks{Department of Mathematics, University of Tennessee, Knoxville ({\tt liang.ling@u.nus.edu}).},~~
Lei Yang\thanks{(Corresponding author) School of Computer Science and Engineering, and Guangdong Province Key Laboratory of Computational Science, Sun Yat-sen University ({\tt yanglei39@mail.sysu.edu.cn}).},~~
Kim-Chuan Toh\thanks{Department of Mathematics, and Institute of Operations Research and Analytics, National University of Singapore ({\tt mattohkc@nus.edu.sg}).}
}

\date{\today}

\maketitle

\begin{abstract}
Inexact proximal augmented Lagrangian methods (ipALMs) have been widely used for solving linearly constrained convex optimization problems, owing to their strong theoretical guarantees and excellent numerical performance. In practice, however, existing ipALMs typically employ Rockafellar-type absolute error criteria for solving the subproblems, which require delicate problem-dependent tuning of error-tolerance sequences. In this paper, we propose ripALM, a relative-type ipALM whose subproblem error criterion has only a \textit{single} tolerance parameter in $[0,1)$. This makes the method simpler to implement and less sensitive to parameter tuning in practice. On the other hand, the use of such a relative-type error criterion renders the convergence of our ripALM beyond the scope of the convergence theory of existing ipALMs. To address this gap, we develop a new analysis framework under which ripALM is shown to admit desirable global convergence properties and it achieves an asymptotic (super)linear convergence rate under a standard error bound condition. While there exist other relative-type inexact pALMs, to ensure convergence, they require additional correct steps that generally impede the convergence speed. To the best of our knowledge, ripALM is the first relative-type inexact version of the vanilla pALM that avoids both summable tolerance parameter sequences and correction steps, while retaining rigorous convergence guarantees. Numerical experiments on quadratically regularized optimal transport and basis pursuit denoising problems demonstrate the effectiveness and robustness of our proposed method.

\vspace{5mm}
\noindent {\bf Keywords:} proximal augmented Lagrangian method; relative-type error criterion; asymptotic (super-)linear convergence rate; quadratically regularized optimal transport; basis pursuit denoising

\end{abstract}


\section{Introduction}\label{sec:introduction}

Linearly constrained convex optimization is a central topic in optimization and arises in numerous applications in machine learning, data science, engineering, and operations research. In this paper, we consider the following classical model:
\begin{equation}\label{eq:mainprob}
\min_{\bm{x}\in\mathbb{R}^N} \quad f(\bm{x}), \quad \mathrm{s.t.} \quad A\bm{x} = \bm{b},
\end{equation}
where $f:\mathbb{R}^{N}\to\mathbb{R}\cup\{+\infty\}$ is a (possibly nonsmooth) proper closed convex function, $A\in \mathbb{R}^{M\times N}$ and $\bm{b}\in\mathbb{R}^{M}$ are given data. Notably, since the smoothness of the objective function $f$ is not required, problem \eqref{eq:mainprob} actually encompasses a broader class of application problems than it may appear at first glance. For instance, by incorporating indicator functions of suitable closed convex sets into the objective, problem \eqref{eq:mainprob} can accommodate more general convex constraints, provided that the resulting proximal mapping of the objective remains tractable; see two illustrative examples in Section \ref{sec-applications}.

Augmented Lagrangian methods (ALMs) and their variants are among the most widely used and effective methods for solving convex constrained optimization problems \cite{hestenes1969multiplier,powell1969method}. They can be applied either to the primal problem \eqref{eq:mainprob} or to its associated dual problem, which, up to a sign change, can be written as
\begin{equation}\label{eq:maindual}
\min\limits_{\bm{y}\in\mathbb{R}^M} \quad f^*({A}^{\top}\bm{y}) - \bm{b}^{\top}\bm{y},
\end{equation}
where $f^*:\mathbb{R}^N\to\mathbb{R}\cup\{+\infty\}$ denotes the convex conjugate of $f$, and $A^\top$ is the transpose of $A$. The choice between primal-based and dual-based ALMs depends on the structure of the problem. In particular, when $f$ is nonsmooth but its proximal mapping is easy to compute, a dual-based approach is often preferable, because the corresponding ALM subproblem amounts to minimizing a continuously differentiable objective function and is therefore typically more tractable numerically. For this reason, we focus on the dual-based framework in this paper. Nevertheless, the analysis developed below applies equally to the primal-based setting.

More specifically, for a given penalty parameter $\sigma>0$, the augmented Lagrangian function associated with the dual problem \eqref{eq:maindual} is given by
\begin{equation*}
\mathcal{L}_\sigma^{\rm dual}(\bm{y}, \bm{x})
=
-\bm{b}^{\top}\bm{y}
+\frac{1}{2\sigma}\big\|\bm{x}+\sigma A^{\top}\bm{y}\big\|^2
-\frac{1}{2\sigma}\|\bm{x}\|^2
-\mathtt{M}_{\sigma f}\big(\bm{x}+\sigma A^{\top}\bm{y}\big),
\quad
(\bm{x},\bm{y})\in\mathbb{R}^N\times\mathbb{R}^M,
\end{equation*}
where $\mathtt{M}_{\sigma f}(\bm{x}) := \min_{\bm{z}} \left\{ f(\bm{z})+\frac{1}{2\sigma}\|\bm{z}-\bm{x}\|^2 \right\}$ denotes the Moreau envelope of $\sigma f$ at $\bm{x}$; see Section~\ref{sec:ripALM} for the detailed derivation. Then, given a sequence of penalty parameters $\{\sigma_k\}\subseteq\mathbb{R}_{++}$ and an initial point $\bm{x}^0\in\mathbb{R}^N$, the dual-based ALM reads as follows:
\begin{equation}\label{dualALMscheme}
\begin{cases}
\bm{y}^{k+1}\in \arg\min\limits_{\bm{y}\in\mathbb{R}^M}
\left\{
\mathcal{L}_{\sigma_k}^{\rm dual}(\bm{y},\bm{x}^k)
\right\}, \\[5pt]
\bm{x}^{k+1}
=
\bm{x}^k+\sigma_k\nabla_{\bm{x}}\mathcal{L}_{\sigma_k}^{\rm dual}(\bm{y}^{k+1},\bm{x}^k)
=
\mathtt{prox}_{\sigma_k f}\big(\bm{x}^k+\sigma_k A^\top \bm{y}^{k+1}\big).
\end{cases}
\end{equation}

The applicability of the augmented Lagrangian method (ALM) has expanded substantially since Rockafellar's seminal works \cite{r1976augmented,r1976monotone}, which established its deep connection with the proximal point algorithm (PPA). This connection has led to a rich convergence theory for the ALM, including asymptotic (super)linear convergence under suitable error bound conditions; see, for example, \cite{cui2019r,luque1984asymptotic} and the references therein. Beyond its theoretical appeal, the ALM and its variants have demonstrated remarkable practical success in a wide range of applications, including conic programming \cite{li2018qsdpnal,lst2020efficient,liang2021inexact,zst2010newton-cg}, statistical optimization \cite{lst2018highly,lin2019efficient,zhang2020efficient}, machine learning \cite{sun2021convex,yan2020efficient}, and image and signal processing \cite{li2013efficient,llm2019nonergodic}. These developments also show that the effectiveness of the ALM depends critically not only on the outer augmented Lagrangian framework itself, but also on how its subproblems are solved and how the associated errors are controlled.

To solve ALM subproblems efficiently, second-order methods, such as Newton-type methods \cite{qi1993nonsmooth}, are particularly attractive because of their fast local convergence. In favorable regimes, only a few Newton-type iterations are often sufficient to obtain highly accurate subproblem solutions. However, their reliable performance typically depends on regularity conditions that ensure the nonsingularity of the (generalized) Hessian of the ALM subproblem objective at a solution. Such conditions are generally difficult to verify in advance and may even fail in practice, thereby limiting the applicability of second-order methods. A standard remedy is to add a proximal term to the ALM subproblem, which gives rise to the proximal augmented Lagrangian method (pALM)\footnote{The pALM is also known as the proximal method of multipliers; see \cite{r1976augmented}.}. Specifically, given $\{\tau_k\}\subseteq\mathbb{R}_{++}$, $\{\sigma_k\}\subseteq\mathbb{R}_{++}$, and $(\bm{x}^0,\bm{y}^0)\in\mathbb{R}^N\times\mathbb{R}^M$, the dual-based pALM iteratively performs
\begin{equation}\label{pALMscheme}
\begin{cases}
\bm{y}^{k+1} = \arg\min\limits_{\bm{y}\in\mathbb{R}^M}
\Big\{\mathcal{L}_{\sigma_k}^{\rm dual}(\bm{y}, \bm{x}^k)
+ \cfrac{\tau_k}{2\sigma_k}\|\bm{y}-\bm{y}^k\|^2\Big\}, \\[8pt]
\bm{x}^{k+1} = \bm{x}^k + \sigma_k \nabla_{\bm{x}}\mathcal{L}_{\sigma_k}^{\rm dual}(\bm{y}^{k+1}, \bm{x}^k)
= \mathtt{prox}_{\sigma_k f}\big(\bm{x}^k+\sigma_k A^\top \bm{y}^{k+1}\big).
\end{cases}
\end{equation}
By incorporating this proximal term, the pALM improves the conditioning of the subproblem and facilitates the application of second-order methods as subsolvers. Consequently, it often exhibits more robust numerical performance in practice compared to the vanilla ALM. We refer the reader to \cite{hermans2019qpALM,lst2020asymptotically,liang2022qppal,lin2019efficient,lst2022augmented,pgk2024efficient,ylct2024corrected} for successful applications of the pALM.

To make the pALM truly implementable and practical, one must allow its subproblems to be solved approximately under an error criterion that is both computationally verifiable and able to preserve desirable convergence properties of the outer loop. To this end, an \textit{absolute-type} error criterion was introduced by Rockafellar for the pALM in \cite{r1976augmented} and has since been widely adopted in the literature; see, e.g., \cite{lst2020asymptotically,liang2022qppal,lin2019efficient,lst2022augmented,pgk2024efficient}. The term ``absolute" means that the accuracy required for solving each pALM subproblem is controlled by a prescribed sequence of tolerance parameters. Such a sequence typically needs to be summable and must be chosen carefully to avoid being either overly conservative or excessively aggressive. Consequently, achieving satisfactory numerical performance often requires delicate parameter tuning, which can be laborious and complicate practical implementation.

An appealing alternative is to employ a \textit{relative-type} error criterion. The term ``relative" means that the errors in solving each subproblem are controlled by certain computable quantities related to the current progress of the algorithm, rather than by a prescribed sequence of tolerance parameters. Such a criterion often involves only a \textit{single} tolerance parameter, and can therefore simplify parameter tuning and improve practical implementability. The use of relative-type error criteria can be traced back to the seminal works of Solodov and Svaiter \cite{ss1999hybridap,ss1999hybridpr,ss2000inexact} on relative-type inexact variants of PPA. Since then, this idea has influenced the development of relative-type inexact variants of numerous algorithms, including the variable metric PPA \cite{pls2008class}, ALM \cite{es2013practical}, ADMM \cite{ey2018relative}, FISTA \cite{bgk2023fista}, and the Bregman-type methods \cite{yht2024inexact,yt2023inexact}.

In contrast, relative-type inexact variants of the pALM remain much less explored, with only a few existing works; see \cite{es2010proximal,hss2004some,ylct2024corrected}. More importantly, these existing methods are not genuine inexact versions of the vanilla pALM. They are all derived by applying a (variable metric) hybrid proximal extragradient method \cite{pls2008class,ss1999hybridap,ss1999hybridpr} to a suitable primal-dual solution mapping of the underlying convex constrained problem, and therefore require an additional \textit{correction step} to guarantee convergence. As a result, the methods developed in \cite{es2010proximal,hss2004some,ylct2024corrected} depart from the vanilla pALM \eqref{pALMscheme}. This distinction is not merely formal. Indeed, our numerical observations indicate that such a correction step tends to slow down practical convergence and increase the overall computational cost. It is therefore natural and important to ask:
\begin{tcolorbox}[colback=black!0!white]
\textit{Can one develop a relative-type inexact version of the vanilla pALM \eqref{pALMscheme} that avoids both summable tolerance parameter sequences and correction steps, while still admitting rigorous convergence guarantees?}
\end{tcolorbox}

The primary objective of this paper is to answer the above question affirmatively. To this end, we propose a \underline{r}elative-type \underline{i}nexact \underline{p}roximal \underline{a}ugmented \underline{L}agrangian \underline{m}ethod (ripALM) for solving the dual problem \eqref{eq:maindual}. The proposed method is a genuine inexact version of the vanilla pALM \eqref{pALMscheme}, whose subproblem error criterion has only a single tolerance parameter in $[0,1)$. This makes ripALM simpler to implement and more robust than existing absolute-type inexact pALMs, as observed from our numerical experiments in Section \ref{sec:numexp}. However, the use of such a relative-type error criterion renders ripALM beyond the scope of the existing convergence theory for inexact pALMs. To overcome this difficulty, we develop a new analysis framework for ripALM. The main contributions of this paper are summarized as follows:
\begin{itemize}
\item We introduce ripALM, which, to the best of our knowledge, is the first relative-type inexact version of the vanilla pALM \eqref{pALMscheme} that avoids both summable tolerance parameter sequences and correction steps.

\item We develop a new analysis framework for ripALM, establish boundedness and global subsequential convergence properties of the generated iterates $\{\bm{y}^k\}$ and $\{\bm{x}^k\}$, and prove the convergence of the sequence $\{\bm{x}^k\}$. Moreover, under a standard error bound condition, we further prove the convergence of the sequence $\{\bm{y}^k\}$ and show that the sequence $\{(\bm{y}^k,\bm{x}^k)\}$ converges asymptotically at a (super)linear rate.

\item We demonstrate the practical effectiveness of ripALM on quadratically regularized optimal transport and basis pursuit denoising problems, where it is highly competitive with existing representative methods.
\end{itemize}

The remaining parts of this paper are organized as follows. Section \ref{sec:ripALM} describes the main algorithmic framework of ripALM, whose convergence analysis is conducted in Section \ref{sec:conver}. Section \ref{sec-applications} showcases how to apply the proposed ripALM for solving QROT and BPDN problems. Numerical experiments are conducted in Section \ref{sec:numexp}. Finally, some concluding remarks are summarized in Section \ref{sec:conclusions}.

\vspace{2mm}
\textbf{Notation.}
We use $\mathbb{R}^n$, $\mathbb{R}_{+}^n$, $\mathbb{R}^{m \times n}$ and $\mathbb{R}_{+}^{m \times n}$ to denote the sets of $n$-dimensional real vectors, $n$-dimensional non-negative vectors, $m \times n$ real matrices and $m \times n$ real non-negative matrices, respectively. For a vector $\bm{x}\in\mathbb{R}^n$, $x_i$ denotes its $i$-th entry, $\|\bm{x}\|$ denotes its Euclidean norm, and $\|\bm{x}\|_H:=\sqrt{\langle\bm{x},\, H\bm{x}\rangle}$ denotes its weighted norm associated with a symmetric positive definite matrix $H\in\mathbb{R}^{n\times n}$. For a matrix $X\in\mathbb{R}^{m\times n}$, ${X_{ij}}$ denotes its $(i,j)$-th entry, $\|X\|_F$ denotes its Frobenius norm, and $\texttt{vec}(X)$ denotes the vectorization of $X$, where $[\texttt{vec}(X)]_{i+(j-1)m}=X_{ij}$ for any $1\leq i\leq m$ and $1\leq j \leq n$. For simplicity, given an integer $n>0$, we use $\bm{1}_{n}\in\mathbb{R}^n$ to denote the $n$-dimensional vector of all ones, and use $I_n$ to denote the $n \times n$ identity matrix.

For an extended-real-valued function $f: \mathbb{R}^{n} \rightarrow [-\infty,\infty]$, we say that it is \textit{proper} if $f(\bm{x}) > -\infty$ for all $\bm{x}\in\mathbb{R}^{n}$ and its effective domain ${\rm dom}\,f:=\{\bm{x} \in \mathbb{R}^{n} : f(\bm{x})<\infty\}$ is nonempty. A proper function $f$ is said to be closed if it is lower semicontinuous. Assume that $f: \mathbb{R}^{n} \rightarrow (-\infty,\infty]$ is a proper and closed convex function, the subdifferential of $f$ at $\bm{x}\in{\rm dom}\,f$ is defined by $\partial f(\bm{x}):=\big\{\bm{d}\in\mathbb{R}^n: f(\bm{y}) \geq f(\bm{x}) + \langle \bm{d}, \,\bm{y}-\bm{x}\rangle, ~\forall\,\bm{y}\in\mathbb{R}^n\big\}$ and its conjugate function $f^*: \mathbb{R}^{n} \rightarrow (-\infty,\infty]$ is defined by $f^*(\bm{y}):=\sup\big\{\langle \bm{y},\,\bm{x}\rangle-f(\bm{x}) : \bm{x}\in\mathbb{R}^n\big\}$. For any $\nu>0$, the Moreau envelope of $\nu f$ at $\bm{x}$ is defined by $\mathtt{M}_{\nu f}(\bm{x}) := \min_{\bm{y}} \big\{f(\bm{y}) + \frac{1}{2\nu}\|\bm{y} - \bm{x}\|^2\big\}$, and the proximal mapping of $\nu f$ at $\bm{x}$ is defined by $\mathtt{prox}_{\nu f}(\bm{x}) := \arg\min_{\bm{y}} \big\{f(\bm{y}) + \frac{1}{2\nu}\|\bm{y} - \bm{x}\|^2\big\}$.

Let $\mathcal{S}$ be a closed convex subset of $\mathbb{R}^n$. Its indicator function $\delta_{\mathcal{S}}$ is defined by $\delta_{\mathcal{S}}(\bm{x})=0$ if $\bm{x}\in\mathcal{S}$ and $\delta_{\mathcal{S}}(\bm{x})=+\infty$ otherwise. Moreover, we denote the weighted distance of $\bm{x}\in\mathbb{R}^n$ to $\mathcal{S}$ by $\mathrm{dist}_{H}(\bm{x},\mathcal{S}):=\inf_{\bm{y}\in\mathcal{S}}\|\bm{x}-\bm{y}\|_H$ associated with a symmetric positive definite matrix $H$. When $H$ is the identity matrix, we omit $H$ in the notation and simply use $\mathrm{dist}(\bm{x},\mathcal{S})$ to denote the Euclidean distance of $\bm{x}\in\mathbb{R}^n$ to $\mathcal{S}$. Moreover, we use $\Pi_{\mathcal{S}}(\bm{x})$ to denote the projection of $\bm{x}$ onto $\mathcal{S}$.

\section{A relative-type inexact pALM}\label{sec:ripALM}

In this section, we focus on developing a relative-type inexact proximal augmented Lagrangian method (ripALM) to solve the dual problem \eqref{eq:maindual}. The algorithmic framework is developed based on the parametric convex duality framework (see, for example, \cite{r1970convex,r1974conjugate} and \cite[Chapter 11]{rw1998variational}).

We first identify problem \eqref{eq:maindual} with the following problem
\begin{equation}\label{eq:para-proorg}
\min\limits_{\bm{y}\in\mathbb{R}^M} \quad G(\bm{y}, \bm{0}),
\end{equation}
where $G:\mathbb{R}^{M}\times\mathbb{R}^{N}\to(-\infty,+\infty]$ is defined by
\begin{equation}\label{eq:para-objpri}
G(\bm{y},\bm{\xi}) := f^*({A}^{\top}\bm{y}+\bm{\xi}) - \bm{b}^{\top}\bm{y}.
\end{equation}
Note that $G$ is proper closed convex since $f^{*}$ is proper closed convex. Then, the (ordinary) Lagrangian function of problem \eqref{eq:maindual} can be defined by taking the concave conjugate of $G$ with respect to its second argument (see \cite[Definition 11.45]{rw1998variational}), that is,
\begin{equation}\label{eq:ripALM_lag}
\ell(\bm{y},\bm{x}) := \inf\limits_{\bm{\xi}\in\mathbb{R}^N}
\left\{G(\bm{y},\bm{\xi}) - \langle\bm{x}, \,\bm{\xi}\rangle\right\}
= - \bm{b}^{\top}\bm{y} + \langle\bm{x},\,{A}^{\top}\bm{y}\rangle - f(\bm{x}).
\end{equation}
Clearly, $\ell$ is convex in its first argument and concave in the second argument.  For a given penalty parameter $\sigma>0$, the augmented Lagrangian function of problem \eqref{eq:maindual} is defined by (see \cite[Example 11.57]{rw1998variational})
\begin{equation*}
\begin{aligned}
\mathcal{L}_{\sigma}(\bm{y},\,\bm{x})
:=& \sup\limits_{\bm{s}\in\mathbb{R}^N} \left\{\ell(\bm{y},\bm{s})-\frac{1}{2\sigma}\|\bm{s}-\bm{x}\|^2\right\} \\
=& -\bm{b}^{\top}\bm{y} + \frac{1}{2\sigma}\big\|\bm{x}+\sigma{A}^{\top}\bm{y}\big\|^2 - \frac{1}{2\sigma}\|\bm{x}\|^2 - \mathtt{M}_{\sigma f}\big(\bm{x}+\sigma{A}^{\top}\bm{y}\big).
\end{aligned}
\end{equation*}
Here, we assume for the remainder of this paper that $\mathcal{L}_{\sigma}$ denotes the augmented Lagrangian function of the \textit{dual} problem \eqref{eq:maindual}, omitting the superscript ``dual" as in \eqref{dualALMscheme}.

From the property of the Moreau envelope (see \cite[Proposition 12.30]{bc2017convex}), we know that $\mathcal{L}_{\sigma}$ is continuously differentiable with respect to its first argument and
\begin{equation*}
\nabla_{y}\mathcal{L}_{\sigma}(\bm{y},\bm{x}) = A\mathtt{prox}_{\sigma f}\big(\bm{x}+\sigma A^{\top}\bm{y}\big) - \bm{b}.
\end{equation*}
With the above preparations, we are now ready to present our ripALM for solving problem \eqref{eq:maindual} in Algorithm \ref{algo:ripALM}.

\begin{algorithm}[ht]
\caption{A relative-type inexact proximal augmented Lagrangian method (ripALM) for solving problem \eqref{eq:maindual}}\label{algo:ripALM}
\textbf{Input:} $\rho\in[0,1)$, $\{\sigma_{k}\}_{k=0}^{\infty}$ such that $\inf_{k\geq0}\{\sigma_{k}\}>0$, and $\{\tau_{k}\}_{k=0}^{\infty}$ such that $0<\inf_{k\geq0}\{\tau_{k}\}\leq\sup_{k\geq0}\{\tau_{k}\}<\infty$. Choose $\bm{y}^0,\,\bm{w}^0\in\mathbb{R}^M$ and $\bm{x}^0\in\mathbb{R}^N$ arbitrarily. Set $k=0$.  \vspace{1mm} \\
\textbf{while} the termination criterion is not met, \textbf{do} \vspace{-2mm}
\begin{itemize}[leftmargin=2.2cm]
\item[\textbf{Step 1}.] Approximately solve the subproblem:
		\begin{equation}\label{ripALM-subpro}
		\min\limits_{\bm{y}\in\mathbb{R}^{M}}~~
        \mathcal{L}_{\sigma_{k}}(\bm{y},\,\bm{x}^k)
        + \frac{\tau_{k}}{2\sigma_{k}}\big\|\bm{y}-\bm{y}^k\big\|^2
		\end{equation}
		to find $\bm{y}^{k+1}\in\mathbb{R}^M$ such that
		\begin{equation}\label{ripALM-inexcond}
		\begin{aligned}
		&\; 2\big|\langle\bm{w}^k-\bm{y}^{k+1},
        \,\sigma_{k}\Delta^{k+1}\rangle\big|
        + \big\|\sigma_{k}\Delta^{k+1}\big\|^2 \\[3pt]
	    \leq&\; \rho\left(
        \big\|\mathtt{prox}_{\sigma_{k} f}\big(\bm{x}^k+\sigma_{k}{A}^{\top}\bm{y}^{k+1}\big)-\bm{x}^{k}\big\|^2
        + \tau_{k}\big\|\bm{y}^{k+1}-\bm{y}^k\big\|^2\right),
		\end{aligned}
		\end{equation}
        where $\Delta^{k+1}
        := \nabla_{y} \mathcal{L}_{\sigma_{k}}(\bm{y}^{k+1},\bm{x}^k) + \tau_{k}\sigma_{k}^{-1}(\bm{y}^{k+1}-\bm{y}^k)$.

\vspace{1mm}	
\item[\textbf{Step 2}.] Update
		\begin{equation}\label{ripALM_xwupdate}
    \bm{x}^{k+1} =
        \mathtt{prox}_{\sigma_{k} f}\big(\bm{x}^k+\sigma_{k}{A}^{\top}\bm{y}^{k+1}\big), \quad
		\bm{w}^{k+1} = \bm{w}^k - \sigma_{k}\Delta^{k+1}.
        \end{equation}
		
\item[\textbf{Step 3}.]Set $k = k+1$ and go to \textbf{Step 1}. \vspace{-2mm}
		
\end{itemize}
\textbf{end while}  \\
\textbf{Output}: $(\bm{y}^k,\bm{x}^k)$. \vspace{0.5mm}
\end{algorithm}

In line with pALM-type methods, at each iteration, our ripALM in Algorithm \ref{algo:ripALM} \textit{approximately} minimizes the sum of the augmented Lagrangian function $\mathcal{L}_{\sigma_{k}}(\cdot,\,\bm{x}^k)$ and a proximal term $\frac{\tau_{k}}{2\sigma_{k}}\big\|\cdot-\bm{y}^k\big\|^2$ under the error criterion \eqref{ripALM-inexcond}, followed by the updates of the multiplier $\bm{x}^k$ and the auxiliary error variable $\bm{w}^k$. Next, we show that the error criterion \eqref{ripALM-inexcond} is achievable at each iteration, provided that a convergent subsolver is employed. Specifically, the subproblem \eqref{ripALM-subpro} admits a unique solution $\bm{y}^{k,*}$ due to the strong convexity, which satisfies
\begin{equation*}
\nabla_{y}\mathcal{L}_{\sigma_{k}}(\bm{y}^{k,*},\bm{x}^k) + \tau_{k}\sigma_{k}^{-1}(\bm{y}^{k,*}-\bm{y}^k)=\bm{0}.
\end{equation*}
Then, a suitable subsolver (e.g., a gradient-type or Newton-type method) for solving the subproblem \eqref{ripALM-subpro} can generate a sequence $\{\bm{y}^{k,t}\}$ converging to $\bm{y}^{k,*}$ as $t\to\infty$. Since the objective function in \eqref{ripALM-subpro} is continuously differentiable (by the property of the Moreau envelope \cite{rw1998variational}), it follows that
\begin{equation*}
\Delta^{k,t}:=\nabla_{y}\mathcal{L}_{\sigma_{k}}(\bm{y}^{k,t},\bm{x}^k) + \tau_{k}\sigma_{k}^{-1}(\bm{y}^{k,t}-\bm{y}^k)\to\bm{0}
\quad\mathrm{as}\quad t\to\infty,
\end{equation*}
which further implies that
\begin{equation*}
2\big|\langle\bm{w}^k-\bm{y}^{k,t}, \,\sigma_{k}\Delta^{k,t}\rangle\big|+ \big\|\sigma_{k}\Delta^{k,t}\big\|^2 \to 0
\quad\mathrm{as}\quad t\to\infty.
\end{equation*}
Therefore, the left-hand side of \eqref{ripALM-inexcond} converges to $0$ as $t\to\infty$. On the other hand, if $\bm{y}^k$ is not the unique solution of the subproblem \eqref{ripALM-subpro} with $\tau_{k}\neq0$ (which is the case of interest, as otherwise we can directly proceed to the next iteration), then the term $\tau_k\|\bm{y}^{k,t}-\bm{y}^k\|^2$ and thus the right-hand side of \eqref{ripALM-inexcond} would not approach $0$ as $t\to\infty$. Consequently, in this case, \eqref{ripALM-inexcond} must be satisfied at some iterate $\bm{y}^{k,t}$ after finitely many $t$ inner iterations. Once \eqref{ripALM-inexcond} is satisfied at $\bm{y}^{k,t}$, we set $\bm{y}^{k+1}:=\bm{y}^{k,t}$ and proceed to the next outer iteration.

Compared to the recent semismooth Newton based inexact proximal augmented Lagrangian ({\sc Snipal}) method developed in \cite[Section 3]{lst2020asymptotically}, our ripALM in Algorithm \ref{algo:ripALM} employs a significantly different error criterion \eqref{ripALM-inexcond} for solving the subproblem \eqref{ripALM-subpro}. Specifically, in our context, {\sc Snipal} requires the error term $\Delta^{k+1}$ to satisfy
\begin{equation}\label{eq:inexact_SNIPAL}
\begin{aligned}
&(A) ~~ \big\|\Delta^{k+1}\big\|
\leq \frac{\min\left(\sqrt{\tau_{k}}, \,1\right)}{\sigma_k}\varepsilon_k,
~~ \varepsilon_k\geq0,
~~ \sum_{k=0}^{\infty} \varepsilon_k<\infty,  \\
&(B) ~~ \big\|\Delta^{k+1}\big\|
\leq \frac{\delta_k\min\left(\sqrt{\tau_{k}}, \,1\right)}{\sigma_k}\sqrt{
\big\|\Delta_x^{k+1}\big\|^2 + \tau_{k}\big\|\Delta_y^{k+1}\big\|^2},
~~0 \leq \delta_k<1,
~~\sum_{k=0}^{\infty} \delta_k<\infty,
\end{aligned}
\end{equation}
with $\Delta_x^{k+1}:=\bm{x}^{k+1}-\bm{x}^{k}$ and $\Delta_y^{k+1}:=\bm{y}^{k+1}-\bm{y}^k$, to guarantee an asymptotic (super)linear convergence rate.\footnote{Note that the error criterion (A) in \eqref{eq:inexact_SNIPAL} alone is sufficient for establishing the global convergence of the {\sc Snipal}; see \cite[Section 3]{lst2020asymptotically}.} Both error criteria (A) and (B) in \eqref{eq:inexact_SNIPAL} are of the absolute type, meaning that they require the pre-specification of two summable tolerance parameter sequences, $\{\varepsilon_k\}\subseteq[0,\infty)$ and $\{\delta_k\}\subseteq[0,1)$, to control the error incurred in the inexact minimization of the subproblem. Since there is generally no direct guidance on optimally selecting these tolerance parameters to achieve good convergence efficiency, this absolute-type criterion typically requires careful tuning for the tolerance parameters, which may make  {\sc Snipal} less user-friendly in practical implementations. In contrast, the error criterion \eqref{ripALM-inexcond} used in ripALM is of the relative type, meaning that the error $2\big|\langle\bm{w}^k-\bm{y}^{k+1},\,\sigma_{k}\Delta^{k+1}\rangle\big| + \big\|\sigma_{k}\Delta^{k+1}\big\|^2$ is regulated by a tentative successive difference related to the progress of the algorithm. Moreover, this relative-type criterion only involves a \textit{single} tolerance parameter $\rho\in[0,1)$, thus simplifying the process of parameter tuning in both computation and implementation, as we shall see in Section \ref{sec:numexp}.

Thanks to the advantage of eliminating the need to select an infinite sequence of tolerance parameters, relative-type error criteria have been widely adopted in numerous well-known algorithms (e.g., PPA, ALM, ADMM, FISTA, etc.) to approximately solve subproblems over the past two decades. This trend began with the seminal works of Solodov and Svaiter \cite{ss1999hybridap,ss1999hybridpr,ss2000inexact}, which has since inspired the development of numerous relative-type inexact algorithms; see, for example, \cite{bgk2023fista,es2013practical,ey2018relative,pls2008class,yht2024inexact,ylct2024corrected,yt2023inexact}. Following this line of research, Yang et al. \cite{ylct2024corrected} recently developed a corrected inexact proximal augmented Lagrangian method (ciPALM) for solving a class of group-quadratic regularized optimal transport problems. The error criterion used there can be described as follows: choose $\rho\in[0,1)$, at each iteration, approximately solve the subproblem \eqref{ripALM-subpro} to find a point $\widetilde{\bm{y}}^{k+1}$ such that
\begin{equation}\label{eq:inexact_ciPALM}
\begin{aligned}
\widetilde{\Delta}^{k+1}
:=& ~\nabla_{y} \mathcal{L}_{\sigma_{k}}(\widetilde{\bm{y}}^{k+1},\bm{x}^k)+\tau_{k}\sigma_{k}^{-1}(\widetilde{\bm{y}}^{k+1}-\bm{y}^k),
\\[3pt]
\hspace*{-0.25cm}\big\|{\sigma_k}\widetilde{\Delta}^{k+1}\big\|^2
\leq &~\rho^{2}\min\left({\tau_{k}}, \,1\right)
\left(\left\|\mathtt{prox}_{\sigma_{k} f}\big(\bm{x}^k+\sigma_{k}{A}^{\top}\widetilde{\bm{y}}^{k+1}\big)-\bm{x}^{k}\right\|^2 + \tau_{k}\left\|\widetilde{\bm{y}}^{k+1} - \bm{y}^{k}\right\|^2\right).
\end{aligned}
\end{equation}
When \eqref{eq:inexact_ciPALM} is satisfied, the multiplier is updated as usual: $\bm{x}^{k+1}=\mathtt{prox}_{\sigma_{k} f}\big(\bm{x}^k+\sigma_{k}{A}^{\top}\widetilde{\bm{y}}^{k+1}\big)$. This is then followed by an extra \textit{correction} step: $\bm{y}^{k+1}=\bm{y}^{k}- \tau_{k}^{-1}\sigma_{k}\left(A\bm{x}^{k+1}-\bm{b}\right)$. After these updates, the algorithm proceeds to the next iteration. One can see that the error criterion \eqref{eq:inexact_ciPALM} used in ciPALM is also of the relative type and looks even simpler than \eqref{ripALM-inexcond} used in our ripALM. However, we should point out that ciPALM requires an additional correction step to ensure its convergence, and hence it may only be viewed as a pALM-like algorithm. Moreover, our numerical results indicate that this correction step tends to degrade the practical performance of the pALM framework. This raises the natural question of whether one can design a relative-type error criterion directly for the vanilla pALM framework. Addressing this question serves as the main motivation of this work. To this end, we propose the relative error criterion \eqref{ripALM-inexcond}, where the error variable $\bm{w}^k$ is used solely in the construction of the error criterion \eqref{ripALM-inexcond} and does not directly influence either the objective function in \eqref{ripALM-subpro} or the updating rules of the primal and dual variables. Our numerical results show that the proposed ripALM may offer greater robustness and efficiency; see Section \ref{sec:numexp} for numerical comparisons.

Our relative error criterion \eqref{ripALM-inexcond} is inspired by Eckstein and Silva's practical relative error criterion \cite{es2013practical}, which was developed for the approximate minimization of the subproblems in the vanilla ALM (i.e., without the proximal term $\frac{\tau_{k}}{2\sigma_{k}}\big\|\bm{y}-\bm{y}^k\big\|^2$ in the subproblem \eqref{ripALM-subpro} in our context). Unlike their work, we suggest incorporating $\frac{\tau_{k}}{2\sigma_{k}}\big\|\bm{y}-\bm{y}^k\big\|^2$ in the subproblem \eqref{ripALM-subpro}. This proximal term guarantees the existence and uniqueness of the optimal solution of the strongly convex subproblem \eqref{ripALM-subpro}. Introducing the proximal term also ensures the positive definiteness of the
generalized Hessian/Jacobian of the objective function in \eqref{ripALM-subpro}, thereby facilitating the effective application of the semi-smooth Newton method to the subproblem \eqref{ripALM-subpro}, as shown in Section \ref{sec:app2qrot}. In addition, as we will see later in the next section, our ripALM not only shares the same convergence properties for the sequence $\{\bm{x}^k\}$ as Eckstein and Silva's relative-type inexact ALM (denoted by ES-iALM) developed in \cite{as2016note,es2013practical,zc2020linear}, but also offers additional theoretical advantages: the sequence $\{\bm{y}^k\}$ is guaranteed to be bounded; moreover, under a relatively weaker error bound assumption (than the local upper Lipschitz continuity of $(\partial\ell)^{-1}$ at the origin), the sequence $\{\bm{y}^{k}\}$ is further proved to be convergent, and both primal and dual sequences achieve asymptotic (super)linear convergence rates.

For ease of comparison, Table \ref{tab:comparison_palms} summarizes the main algorithmic and theoretical differences between ripALM and the related methods discussed above. To the best of our knowledge, ripALM is the first relative-type inexact version of the vanilla pALM \eqref{pALMscheme} that avoids both summable tolerance sequences and correction steps.

\begin{table}[ht]
\centering
\caption{A comparison of ES-iALM, {\sc Snipal}, ciPALM, and ripALM for solving problem \eqref{eq:maindual}. In the table, ``Seq. Conv." means sequential convergence; ``\texttt{Abs}." means absolute-type; ``\texttt{Rel}." means relative-type; ``\ding{52}$^E$" indicates convergence under an error bound condition.}
\label{tab:comparison_palms}
\renewcommand{\arraystretch}{1.25}
\centering
\tabcolsep 5.5pt
\scalebox{0.85}{
\begin{tabular}{lcccccccc}
\toprule
\multirow{2}{*}{Methods} &
\multirow{2}{*}{Type} &
\multirow{2}{*}{\makecell{Vanilla \\ (p)ALM}} &
\multirow{2}{*}{\makecell{Proximal \\ Term}} &
\multicolumn{2}{c}{Boundedness} &
\multicolumn{2}{c}{Seq. Conv.}  &
\multirow{2}{*}{\makecell{Asymptotic \\ (super)linear Rate \\ (under Error Bound)}} \\
\cmidrule(l){5-6} \cmidrule(l){7-8}
& & & & $\{\bm{y}^{k}\}$ & $\{\bm{x}^{k}\}$ &
$\{\bm{y}^{k}\}$ & $\{\bm{x}^{k}\}$ &  \\
\midrule
ES-iALM \cite{as2016note,es2013practical,zc2020linear} & \texttt{Rel}. & \ding{52} & \ding{56} & \ding{56} & \ding{52} & \ding{56} & \ding{52} & $\{\bm{x}^{k}\}$ \\
{\sc Snipal} \cite{lst2020asymptotically} & \texttt{Abs}. & \ding{52} & \ding{52} & \ding{52} & \ding{52} & \ding{52} & \ding{52} & $\{(\bm{y}^{k},\bm{x}^{k})\}$ \\
ciPALM \cite{ylct2024corrected} & \texttt{Rel}. & \makecell{\ding{56}} & \ding{52} & \ding{52} & \ding{52} & \ding{52} & \ding{52} & $\{(\bm{y}^{k},\bm{x}^{k})\}$ \\
\textbf{ripALM [this work]} & \texttt{Rel}. & \ding{52} & \ding{52} & \ding{52} & \ding{52} & \ding{52}$^E$ & \ding{52} & $\{(\bm{y}^{k},\bm{x}^{k})\}$ \\
\bottomrule
\end{tabular}}
\end{table}

\section{Convergence analysis}\label{sec:conver}

In this section, we study the convergence properties of the ripALM in Algorithm \ref{algo:ripALM}. Before proceeding, we recall the definition of $G$ from \eqref{eq:para-objpri} and further define $F:\mathbb{R}^{M}\times\mathbb{R}^{N}\rightrightarrows\mathbb{R}^{M}\times\mathbb{R}^{N}$ to be the concave conjugate of $G$:
\begin{equation}\label{eq:para-objdual}
F(\bm{\theta}, \bm{x}) := \inf\limits_{\bm{y}\in\mathbb{R}^{M},\,\bm{\xi}\in\mathbb{R}^{N}} \big\{G(\bm{y},\bm{\xi}) - \langle\bm{\theta},\bm{y}\rangle - \langle\bm{x},\bm{\xi}\rangle\big\},
\end{equation}
which is a closed (upper semicontinuous) concave function. Then, the dual problem of \eqref{eq:para-proorg} is given by
\begin{equation}\label{eq:para-proorgdual}
\max\limits_{\bm{x}\in\mathbb{R}^N} \quad F(\bm{0},\bm{x}),
\end{equation}
which can be rewritten as problem \eqref{eq:mainprob}. Next, let $\partial G:\mathbb{R}^{M}\times\mathbb{R}^{N}\rightrightarrows\mathbb{R}^{M}\times\mathbb{R}^{N}$ and $\partial F:\mathbb{R}^{M}\times\mathbb{R}^{N}\rightrightarrows\mathbb{R}^{M}\times\mathbb{R}^{N}$
denote the subgradient maps of $G$ and $F$, respectively, that is,
\begin{equation*}
\begin{aligned}
&(\bm{\theta},\bm{x})\in\partial G(\bm{y},\bm{\xi}) ~\Leftrightarrow~
G(\bm{y}',\bm{\xi}') \geq G(\bm{y},\bm{\xi}) + \langle\bm{\theta}, \bm{y}'-\bm{y}\rangle + \langle\bm{x},\bm{\xi}'-\bm{\xi}\rangle, ~~\forall (\bm{y}',\bm{\xi}'), \\ 
&(\bm{y},\bm{\xi})\in\partial F(\bm{\theta},\bm{x}) ~\Leftrightarrow~
F(\bm{\theta}',\bm{x}') \leq F(\bm{\theta},\bm{x}) - \langle\bm{y}, \bm{\theta}'-\bm{\theta}\rangle - \langle\bm{\xi},\bm{x}'-\bm{x}\rangle, ~~\forall (\bm{\theta}',\bm{x}'). 
\end{aligned}
\end{equation*}
We also recall the definition of $\ell$ from \eqref{eq:ripALM_lag} and let $\partial\ell:\mathbb{R}^{M}\times\mathbb{R}^{N}\rightrightarrows\mathbb{R}^{M}\times\mathbb{R}^{N}$ denote the subgradient map of $\ell$, that is,
\begin{equation*}
(\bm{\theta},\bm{\xi}) \in \partial \ell(\bm{y},\bm{x}) ~\Leftrightarrow~
\left\{\begin{aligned}
&\ell(\bm{y}',\bm{x}) \geq \ell(\bm{y},\bm{x}) + \langle\bm{\theta}, \bm{y}'-\bm{y}\rangle, ~~\forall\bm{y}'\in\mathbb{R}^M, \\
&\ell(\bm{y},\bm{x}') \leq \ell(\bm{y},\bm{x}) - \langle\bm{\xi}, \bm{x}'-\bm{x}\rangle, ~~\forall\bm{x}'\in\mathbb{R}^N.
\end{aligned}\right.
\end{equation*}
By this definition, one can verify that
\begin{equation*}
\partial\ell(\bm{y},\bm{x}) = \left\{A\bm{x}-\bm{b}\right\}
\times\left\{\bm{v}-A^{\top}\bm{y} \mid \bm{v}\in \partial f(\bm{x})\right\}.
\end{equation*}
Moreover, all the subgradient maps $\partial G$, $\partial F$, and $\partial \ell$ are maximal monotone operators, and satisfy that (see also \cite[equation (23)]{es2013practical}):
\begin{equation}\label{subdiffrelation}
(\bm{\theta},\bm{x}) \in \partial G(\bm{y},\bm{\xi}) ~\Leftrightarrow~
(\bm{\theta},\bm{\xi}) \in \partial \ell(\bm{y},\bm{x}) ~\Leftrightarrow~
(\bm{y},\bm{\xi})\in\partial F(\bm{\theta},\bm{x}).
\end{equation}
If $(\bm{y}^*,\bm{x}^*)\in\mathbb{R}^M\times\mathbb{R}^N$ satisfies $(\bm{0},\bm{0})\in\partial \ell(\bm{y}^*,\bm{x}^*)$, i.e, $(\bm{y}^*,\bm{x}^*)$ solves the following KKT system:
\begin{equation*}
\bm{0} \in \partial f(\bm{x}) - A^{\top}\bm{y},
\quad
A\bm{x} - \bm{b} = \bm{0},
\end{equation*}
then $\bm{y}^*$ solves problem \eqref{eq:para-proorg} (i.e., problem \eqref{eq:maindual}) and $\bm{x}^*$ solves problem \eqref{eq:para-proorgdual} (i.e., problem \eqref{eq:mainprob}). In this case, we call $(\bm{y}^*,\bm{x}^*)$ a \textit{saddle point} of the Lagrangian function $\ell(\bm{y},\bm{x})$. If such a saddle point exists, then the strong duality holds, that is, the optimal values of problems \eqref{eq:para-proorg} and \eqref{eq:para-proorgdual} are finite and equal, i.e., $G(\bm{y}^*,\bm{0}) = F(\bm{0},\bm{x}^*)$. Moreover, the set of saddle points can be written as $\mathcal{Y}^* \times \mathcal{X}^*\subseteq\mathbb{R}^M\times\mathbb{R}^N$, where $\mathcal{Y}^*$ is the solution set of problem \eqref{eq:para-proorg} (i.e., problem \eqref{eq:maindual}) and $\mathcal{X}^*$ is the solution set of problem \eqref{eq:para-proorgdual} (i.e., problem \eqref{eq:mainprob}).

With the above preparations, we are now ready to establish the convergence results of the proposed ripALM in Algorithm \ref{algo:ripALM}. Note that, while our analysis is inspired by that of ES-iALM in \cite{as2016note,es2013practical,zc2020linear}, the inclusion of the proximal term $\frac{\tau_{k}}{2\sigma_{k}}\|\bm{y}-\bm{y}^k\|^2$ indeed introduces new technical challenges. We need to carefully derive new recursive inequalities that simultaneously involve both primal and dual sequences, $\{\bm{x}^k\}$ and $\{\bm{y}^k\}$, which cannot be obtained by applying existing results directly. In particular, to establish the asymptotic (super)linear convergence rate in Theorem \ref{thm:Q-linear-rate}, we derive a novel recursion \eqref{eq:dist-xybark}, which critically relies on the presence of a strictly positive proximal parameter $\tau_{k}$, as $\tau_{k}$ appears in the denominator of the coefficient on the right-hand side. Consequently, this novel recursion is tailored to our ripALM framework.

\begin{theorem}\label{thm:convergence}
Let the functions $G$, $F$ and $\ell$ be defined as in \eqref{eq:para-objpri}, \eqref{eq:para-objdual} and \eqref{eq:ripALM_lag}, respectively. Let $\rho\in[0,1)$, $\{\sigma_{k}\}$ be a positive sequence satisfying that $\sigma_k\geq\sigma_{\min}>0$ for all $k\geq0$, and $\{\tau_{k}\}$ be a positive sequence satisfying that
\begin{equation*}
\tau_{k}\geq\tau_{\min}>0, \quad \tau_{k+1}\leq(1+\nu_{k})\tau_{k} \quad
\mbox{with} \quad \nu_{k}\geq0 ~~\mbox{and}~~ {\textstyle\sum_{k=0}^{\infty}}\nu_{k} < +\infty.
\end{equation*}
Let $\{\bm{y}^{k}\}$, $\{\Delta^k\}$, $\{\bm{w}^k\}\subset\mathbb{R}^{M}$ and $\{\bm{x}^{k}\}\subset\mathbb{R}^{N}$ be sequences generated by Algorithm \ref{algo:ripALM}. If $\ell$ admits a saddle point (i.e., $(\partial\ell)^{-1}(\bm{0},\bm{0})\neq\emptyset$), then the following statements hold.
\begin{enumerate}[label=(\roman*)]
\item The sequences $\{\bm{y}^k\}$, $\{\bm{w}^k\}$ and $\{\bm{x}^k\}$ are bounded.
		
\item $\lim\limits_{{k}\to\infty}\Delta^{k+1}=\bm{0}$, $\lim\limits_{{k}\to\infty}\bm{\theta}^{k+1}=\bm{0}$ and $\lim\limits_{{k}\to\infty}\bm{\xi}^{k+1}=\bm{0}$, where $\bm{\theta}^{k+1}$ and $\bm{\xi}^{k+1}$ are defined by
	\begin{equation*}
	\bm{\theta}^{k+1}:=\Delta^{k+1} - \tau_{k}\sigma_{k}^{-1}(\bm{y}^{k+1}
    - \bm{y}^{k})\quad\text{and}\quad \bm{\xi}^{k+1}:=\sigma_{k}^{-1}(\bm{x}^{k}-\bm{x}^{k+1}), \quad \forall\,k\geq0.
	\end{equation*}
		
\item Both sequences $\left\{G(\bm{y}^{k+1},\bm{\xi}^{k+1})\right\}$ and $\left\{F(\bm{\theta}^{k+1},\,\bm{x}^{k+1})\right\}$ converge to the common optimal value of problems \eqref{eq:para-proorg} and \eqref{eq:para-proorgdual}.
	
\item Any accumulation point of $\{\bm{y}^k\}$ is an optimal solution of problem \eqref{eq:para-proorg} (i.e., problem \eqref{eq:maindual}), and any accumulation point of $\{\bm{x}^k\}$ is an optimal solution of problem \eqref{eq:para-proorgdual} (i.e., problem \eqref{eq:mainprob}).
    	
\item The sequence $\{\bm{x}^k\}$ converges to an optimal solution of problem \eqref{eq:para-proorgdual}.
\end{enumerate}
\end{theorem}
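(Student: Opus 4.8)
The plan is to adapt the convergence analysis of ES-iALM in \cite{es2013practical,as2016note,zc2020linear}, threading in the proximal weight $\tau_{k}$ and the auxiliary variable $\bm{w}^{k}$. The starting point is to recast Steps 1--2 of Algorithm \ref{algo:ripALM} as an approximate subgradient inclusion. Using $\nabla_{y}\mathcal{L}_{\sigma_{k}}(\bm{y}^{k+1},\bm{x}^{k})=A\bm{x}^{k+1}-\bm{b}$ together with the optimality characterization of $\bm{x}^{k+1}=\mathtt{prox}_{\sigma_{k}f}(\bm{x}^{k}+\sigma_{k}A^{\top}\bm{y}^{k+1})$, namely $\sigma_{k}^{-1}(\bm{x}^{k}-\bm{x}^{k+1})+A^{\top}\bm{y}^{k+1}\in\partial f(\bm{x}^{k+1})$, and the explicit formula for $\partial\ell$ recalled above, I would show that $(\bm{\theta}^{k+1},\bm{\xi}^{k+1})\in\partial\ell(\bm{y}^{k+1},\bm{x}^{k+1})$ with $\bm{\theta}^{k+1}=\Delta^{k+1}-\tau_{k}\sigma_{k}^{-1}(\bm{y}^{k+1}-\bm{y}^{k})=A\bm{x}^{k+1}-\bm{b}$ and $\bm{\xi}^{k+1}=\sigma_{k}^{-1}(\bm{x}^{k}-\bm{x}^{k+1})$, exactly as in statement (ii).

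Next I would derive the central recursion. Fix a saddle point $(\bm{y}^{*},\bm{x}^{*})\in\mathcal{Y}^{*}\times\mathcal{X}^{*}$, so $(\bm{0},\bm{0})\in\partial\ell(\bm{y}^{*},\bm{x}^{*})$. Monotonicity of $\partial\ell$ gives $\langle\bm{\theta}^{k+1},\bm{y}^{k+1}-\bm{y}^{*}\rangle+\langle\bm{\xi}^{k+1},\bm{x}^{k+1}-\bm{x}^{*}\rangle\geq0$; expanding both inner products via the three-point identity $2\langle\bm{a}-\bm{b},\bm{a}-\bm{c}\rangle=\|\bm{a}-\bm{b}\|^{2}+\|\bm{a}-\bm{c}\|^{2}-\|\bm{b}-\bm{c}\|^{2}$ in the $\bm{y}$- and $\bm{x}$-blocks separately leaves a cross term $2\sigma_{k}\langle\Delta^{k+1},\bm{y}^{k+1}-\bm{y}^{*}\rangle$. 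This is where $\bm{w}^{k}$ does its work: writing $\bm{y}^{k+1}-\bm{y}^{*}=(\bm{y}^{k+1}-\bm{w}^{k})+(\bm{w}^{k}-\bm{y}^{*})$ and using $\bm{w}^{k+1}=\bm{w}^{k}-\sigma_{k}\Delta^{k+1}$, the part involving $\bm{w}^{k}-\bm{y}^{*}$ telescopes into $\|\bm{w}^{k}-\bm{y}^{*}\|^{2}-\|\bm{w}^{k+1}-\bm{y}^{*}\|^{2}$ (plus $\|\sigma_{k}\Delta^{k+1}\|^{2}$), while the part involving $\bm{y}^{k+1}-\bm{w}^{k}$, together with $\|\sigma_{k}\Delta^{k+1}\|^{2}$, is precisely what the relative criterion \eqref{ripALM-inexcond} bounds above by $\rho\big(\|\bm{x}^{k+1}-\bm{x}^{k}\|^{2}+\tau_{k}\|\bm{y}^{k+1}-\bm{y}^{k}\|^{2}\big)$. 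Collecting terms and using $\tau_{k+1}\leq(1+\nu_{k})\tau_{k}$, I obtain, with $W_{k}:=\tau_{k}\|\bm{y}^{k}-\bm{y}^{*}\|^{2}+\|\bm{x}^{k}-\bm{x}^{*}\|^{2}+\|\bm{w}^{k}-\bm{y}^{*}\|^{2}$ (depending on the chosen $(\bm{y}^{*},\bm{x}^{*})$),
\begin{equation*}
W_{k+1}\leq(1+\nu_{k})W_{k}-(1-\rho)\big(\|\bm{x}^{k+1}-\bm{x}^{k}\|^{2}+\tau_{k}\|\bm{y}^{k+1}-\bm{y}^{k}\|^{2}\big).
\end{equation*}
Since $\rho<1$ and $\sum_{k}\nu_{k}<\infty$, the Robbins--Siegmund lemma yields that $W_{k}$ converges for every saddle point and that $\sum_{k}\big(\|\bm{x}^{k+1}-\bm{x}^{k}\|^{2}+\tau_{k}\|\bm{y}^{k+1}-\bm{y}^{k}\|^{2}\big)<\infty$. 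Because $\tau_{k}\geq\tau_{\min}>0$, this delivers statement (i) and $\|\bm{x}^{k+1}-\bm{x}^{k}\|\to0$, $\|\bm{y}^{k+1}-\bm{y}^{k}\|\to0$; combined with $\sigma_{k}\geq\sigma_{\min}>0$, $\sup_{k}\tau_{k}<\infty$ and \eqref{ripALM-inexcond}, these force $\Delta^{k+1}\to\bm{0}$, hence $\bm{\xi}^{k+1}\to\bm{0}$ and $\bm{\theta}^{k+1}\to\bm{0}$, i.e., statement (ii).

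Statements (iii)--(v) then follow from this machinery. For (iii), the equivalence \eqref{subdiffrelation} turns $(\bm{\theta}^{k+1},\bm{\xi}^{k+1})\in\partial\ell(\bm{y}^{k+1},\bm{x}^{k+1})$ into $(\bm{\theta}^{k+1},\bm{x}^{k+1})\in\partial G(\bm{y}^{k+1},\bm{\xi}^{k+1})$ and $(\bm{y}^{k+1},\bm{\xi}^{k+1})\in\partial F(\bm{\theta}^{k+1},\bm{x}^{k+1})$; the former gives the Fenchel equality $G(\bm{y}^{k+1},\bm{\xi}^{k+1})=F(\bm{\theta}^{k+1},\bm{x}^{k+1})+\langle\bm{\theta}^{k+1},\bm{y}^{k+1}\rangle+\langle\bm{x}^{k+1},\bm{\xi}^{k+1}\rangle$, so by (i)--(ii) the gap tends to $0$, while the subgradient inequalities of $G$ at $(\bm{y}^{*},\bm{0})$ and of $F$ at $(\bm{0},\bm{x}^{*})$ trap both sequences around the common value $G(\bm{y}^{*},\bm{0})=F(\bm{0},\bm{x}^{*})$. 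For (iv), along any subsequence converging to an accumulation point of $\{\bm{y}^{k}\}$ (equivalently of $\{\bm{y}^{k+1}\}$, since $\|\bm{y}^{k+1}-\bm{y}^{k}\|\to0$), I extract a companion accumulation point of the bounded $\{\bm{x}^{k+1}\}$; as $\bm{\theta}^{k+1},\bm{\xi}^{k+1}\to\bm{0}$ and the graph of the maximal monotone $\partial\ell$ is closed, the limit pair is a zero of $\partial\ell$, hence lies in $\mathcal{Y}^{*}\times\mathcal{X}^{*}$, and symmetrically for accumulation points of $\{\bm{x}^{k}\}$. For (v), fix $\bm{y}^{*}\in\mathcal{Y}^{*}$; for any $\bm{x}^{*}_{1},\bm{x}^{*}_{2}\in\mathcal{X}^{*}$ the two sequences $W_{k}$ formed from $(\bm{y}^{*},\bm{x}^{*}_{1})$ and $(\bm{y}^{*},\bm{x}^{*}_{2})$ both converge and differ only through their $\bm{x}$-terms, so $\|\bm{x}^{k}-\bm{x}^{*}_{1}\|^{2}-\|\bm{x}^{k}-\bm{x}^{*}_{2}\|^{2}$ converges, forcing $\langle\bm{x}^{k},\bm{x}^{*}_{1}-\bm{x}^{*}_{2}\rangle$ to converge; together with (iv) and boundedness of $\{\bm{x}^{k}\}$, any two accumulation points $\bm{x}^{*}_{a},\bm{x}^{*}_{b}\in\mathcal{X}^{*}$ of $\{\bm{x}^{k}\}$ satisfy $\|\bm{x}^{*}_{a}-\bm{x}^{*}_{b}\|^{2}=0$, so $\{\bm{x}^{k}\}$ has a unique accumulation point and converges to a point of $\mathcal{X}^{*}$.

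The step I expect to be the main obstacle is the construction of the recursion in the second paragraph. One must assemble the monotonicity inequality, the three-point identities, the relative criterion \eqref{ripALM-inexcond}, and the updates of $\bm{w}^{k}$ into a single dissipative inequality in which $\bm{w}^{k}$ simultaneously absorbs the inexactness and supplies a telescoping term, all while the time-varying weight $\tau_{k}$ stays compatible with the Robbins--Siegmund summability. Unlike in ES-iALM, the proximal term genuinely couples $\{\bm{x}^{k}\}$ and $\{\bm{y}^{k}\}$ in this recursion, and preventing the $(1+\nu_{k})$ slack from $\tau_{k+1}\leq(1+\nu_{k})\tau_{k}$ from spoiling either the convergence of $W_{k}$ or the summability of the successive differences is the delicate bookkeeping; it is this same recursion (sharpened to a $\tau_{k}^{-1}$-dependent contraction coefficient) that will later underpin the asymptotic (super)linear rate in Theorem \ref{thm:Q-linear-rate}.
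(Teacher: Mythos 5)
Your proposal is correct, and its backbone — the inclusion $(\bm{\theta}^{k+1},\bm{\xi}^{k+1})\in\partial\ell(\bm{y}^{k+1},\bm{x}^{k+1})$, the three-point expansions in the $\bm{x}$-, $\bm{y}$- and $\bm{w}$-blocks, the splitting $\bm{y}^{k+1}-\bm{y}^{*}=(\bm{y}^{k+1}-\bm{w}^{k})+(\bm{w}^{k}-\bm{y}^{*})$ so that \eqref{ripALM-inexcond} absorbs the inexactness while $\bm{w}^{k}$ telescopes, and the resulting quasi-Fej\'er recursion for $W_{k}$ — is exactly the paper's \eqref{eq:recursion_of_xyw}--\eqref{eq:relaxed_monotone}, so statements (i) and (ii) are handled identically (the paper cites a lemma of Polyak rather than Robbins--Siegmund, an immaterial difference). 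The remaining parts diverge in technique while reaching the same conclusions. For (iii) you trap both objective sequences between the subgradient inequalities of $G$ at $(\bm{y}^{*},\bm{0})$ and of $F$ at $(\bm{0},\bm{x}^{*})$ combined with the Fenchel equality, whereas the paper proves $\liminf F\geq F(\bm{0},\bm{x}^{*})$ by concavity and then closes the gap with an upper-semicontinuity argument along a subsequence; your route is slightly more elementary since it never invokes semicontinuity of $F$, but it leans on strong duality $G(\bm{y}^{*},\bm{0})=F(\bm{0},\bm{x}^{*})$, which is available here. For (iv) you use graph-closedness of the maximal monotone $\partial\ell$ on a companion subsequence, while the paper uses lower/upper semicontinuity of $G$ and $F$ together with (iii); both are standard and valid. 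The most substantive difference is (v): you run the classical Opial argument, differencing the two convergent Lyapunov sequences built from $(\bm{y}^{*},\bm{x}^{*}_{1})$ and $(\bm{y}^{*},\bm{x}^{*}_{2})$ (legitimate because the saddle-point set is the product $\mathcal{Y}^{*}\times\mathcal{X}^{*}$, so the $\bm{w}$- and $\bm{y}$-terms cancel exactly) to force any two accumulation points of $\{\bm{x}^{k}\}$ to coincide. The paper instead introduces $\phi=\liminf_{k}D_{\tau_{k}}\bigl((\bm{w}^{k},\bm{y}^{k}),\mathcal{Y}^{*}\bigr)$, selects a subsequence attaining it together with an accumulation point $\bm{x}^{\infty}\in\mathcal{X}^{*}$, and squeezes $\limsup_{k}\|\bm{x}^{k}-\bm{x}^{\infty}\|^{2}$ to zero using the tail products $\prod_{i\geq k_{j}}(1+\nu_{i})\to1$. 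Your version is shorter and more transparent; the paper's version is the one that has to be used when the Lyapunov function is not an exact quadratic in $\bm{x}^{*}$ (so that differencing would not cancel the cross terms), but here both work.
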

\begin{proof}
See Appendix \ref{sec:proofconver}.
\end{proof}

Next, we study the asymptotic (super)linear convergence rate of our ripALM under an error bound condition presented in Assumption \ref{asp:error-bound-Li}. As noted in \cite[Lemma 2.4]{lst2020asymptotically}, this error bound condition is weaker than the local upper Lipschitz continuity of $(\partial\ell)^{-1}$ at the origin. The latter condition was used in \cite{zc2020linear} to establish the asymptotic (super)linear convergence rate for Eckstein and Silva's relative-type inexact ALM, while the former, weaker condition has also been employed in \cite{lst2020asymptotically} and \cite{ylct2024corrected} to establish the asymptotic (super)linear convergence rate for {\sc Snipal} and ciPALM, respectively.

\begin{assumption}\label{asp:error-bound-Li}
For any $r>0$, there exists a constant $\kappa>0$ such that, for any $(\bm{y},\bm{x}) \in \left\{(\bm{y},\bm{x})\in\mathbb{R}^M\times\mathbb{R}^N \mid \mathrm{dist}\left((\bm{y},\bm{x}), \,(\partial \ell)^{-1}(\bm{0}, \bm{0})\right) \leq r\right\}$,
\begin{equation}\label{eq:error-bound-Li}
\mathrm{dist}\left((\bm{y},\bm{x}), \,(\partial\ell)^{-1}(\bm{0}, \bm{0})\right)
\leq \kappa\,\mathrm{dist}\left((\bm{0}, \bm{0}), \,\partial\ell(\bm{y},\bm{x})\right).
\end{equation}
\end{assumption}

\begin{theorem}\label{thm:Q-linear-rate}
Let $\ell$ be defined as in \eqref{eq:ripALM_lag}, and let $\rho\in[0,1)$, $\{\sigma_{k}\}$ be a positive sequence satisfying that $\sigma_k\geq\sigma_{\min}>0$ for all $k\geq0$, and $\{\tau_{k}\}$ be a positive sequence satisfying that
\begin{equation*}
\tau_{k}\geq\tau_{\min}>0, \quad \tau_{k+1}\leq(1+\nu_{k})\tau_{k} \quad
\mbox{with} \quad \nu_{k}\geq0 ~~\mbox{and}~~ {\textstyle\sum_{k=0}^{\infty}}\nu_{k} < +\infty.
\end{equation*}
Suppose additionally that $\ell$ admits a saddle point (i.e., $(\partial\ell)^{-1}(\bm{0},\bm{0})\neq\emptyset$), Assumption \ref{asp:error-bound-Li} holds, and the sequences of parameters $\rho$, $\{\sigma_k\}$ and $\{\tau_k\}$ satisfy that
\begin{equation}\label{para-conds}
\sqrt{\tau_{\min}} - 2\sqrt{\rho} > 0
\quad \text{and} \quad
\liminf\limits_{k\to\infty} ~ \sigma_{k} > c\cdot\frac{2\kappa\sqrt{\tau_{\max}}\left(\rho+\sqrt{\rho\,\overline{\tau}_{\max}}\right)}{\sqrt{\tau_{\min}} - 2\sqrt{\rho}},
\end{equation}
where $c>1$ is an arbitrarily given positive constant, $\tau_{\max}:=\tau_{0}\prod_{k=0}^{\infty}(1+\nu_{k})$, and $\overline{\tau}_{\max}:=\max\left\{1,\tau_{\max}\right\}$. Let $\Lambda^{k} := \operatorname{Diag}(\tau_{k}I_{M},I_{N})$, $\overline{\tau}_{k} := \max\left\{1, \tau_{k}\right\}$ and
\begin{equation*}
\gamma_{k} := \left(1 - \frac{2\kappa\sqrt{\tau_{k}}\left(\rho+\sqrt{\rho\overline{\tau}_{k}}\right)
+2\sigma_{k}\sqrt{\rho}}{\sigma_{k}\sqrt{\tau_{k}}}\right)
\frac{\sigma_{k}^{2}}{\kappa^2\left(\sqrt{\rho}+\sqrt{\overline{\tau}_{k}}\right)^2\overline{\tau}_{k}}.
\end{equation*}
Then, the following statements hold.
\begin{enumerate}[label=(\roman*)]
\item For all sufficiently large $k$, we have that
    \begin{equation*}
    \gamma_{k}\geq
    \left(\frac{c-1}{c}\right)
    \cdot\frac{\sqrt{\tau_{\min}} - 2\sqrt{\rho}}{\sqrt{\tau_{\min}}}
    \cdot\frac{\sigma_k^2}{\kappa^2\left(\sqrt{\rho}+\sqrt{\overline{\tau}_{\max}}\right)^2\overline{\tau}_{\max}} > 0,
    \end{equation*}
    and
    \begin{equation*}
    \mathrm{dist}_{\Lambda^{k+1}}\left((\bm{y}^{k+1},\bm{x}^{k+1}), \,(\partial\ell)^{-1}(\bm{0}, \bm{0})\right) \leq \mu_{k}\, \mathrm{dist}_{\Lambda^{k}}\left((\bm{y}^{k},\bm{x}^{k}), \,(\partial\ell)^{-1}(\bm{0}, \bm{0})\right),
    \end{equation*}
    where
    \begin{equation*}
    \mu_{k}:=\sqrt{\frac{1+\nu_{k}}{1+\gamma_{k}}}
    \quad \mbox{satisfies} \quad
    \limsup\limits_{k\to\infty}\,\left\{\mu_{k}\right\}<1
    ~~\mbox{as}~~\nu_{k}\to0.
    \end{equation*}

\item The sequence $\{\bm{y}^k\}$ is convergent.
\end{enumerate}

\end{theorem}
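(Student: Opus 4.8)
Throughout write $\bm{z}^k:=(\bm{y}^k,\bm{x}^k)$ and $\mathcal{Z}^*:=(\partial\ell)^{-1}(\bm{0},\bm{0})=\mathcal{Y}^*\times\mathcal{X}^*$. The plan is to read one ripALM step as a perturbed proximal-point step for the maximal monotone operator $\partial\ell$ in the metric induced by $\Lambda^k$, and then to combine the relative error criterion \eqref{ripALM-inexcond}, the error bound in Assumption \ref{asp:error-bound-Li}, and the conclusions of Theorem \ref{thm:convergence}. First, using the optimality characterization of $\mathtt{prox}_{\sigma_k f}$ and the explicit formula for $\partial\ell$, I would check that the quantities $\bm{\theta}^{k+1},\bm{\xi}^{k+1}$ from Theorem \ref{thm:convergence}(ii) satisfy
\[
\tfrac{1}{\sigma_k}\Lambda^{k}\big(\bm{z}^k-\bm{z}^{k+1}\big)+\big(\Delta^{k+1},\bm{0}\big)\ =\ \big(\bm{\theta}^{k+1},\bm{\xi}^{k+1}\big)\ \in\ \partial\ell\big(\bm{z}^{k+1}\big),
\]
so $\mathrm{dist}\big((\bm{0},\bm{0}),\partial\ell(\bm{z}^{k+1})\big)\leq\|(\bm{\theta}^{k+1},\bm{\xi}^{k+1})\|$. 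Since \eqref{ripALM-inexcond} in particular gives $\|\sigma_k\Delta^{k+1}\|^2\leq\rho\,\|\bm{z}^{k+1}-\bm{z}^k\|_{\Lambda^k}^2$, and since $\tau_k^2\|\bm{y}^{k+1}-\bm{y}^k\|^2+\|\bm{x}^{k+1}-\bm{x}^k\|^2\leq\overline{\tau}_k\|\bm{z}^{k+1}-\bm{z}^k\|_{\Lambda^k}^2$, a triangle-inequality estimate yields $\mathrm{dist}\big((\bm{0},\bm{0}),\partial\ell(\bm{z}^{k+1})\big)\leq\tfrac{\sqrt{\rho}+\sqrt{\overline{\tau}_k}}{\sigma_k}\|\bm{z}^{k+1}-\bm{z}^k\|_{\Lambda^k}$. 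By Theorem \ref{thm:convergence}, $\{\bm{z}^k\}$ is bounded and all of its accumulation points lie in the closed set $\mathcal{Z}^*$, hence $\mathrm{dist}(\bm{z}^k,\mathcal{Z}^*)\to0$; so for all large $k$ Assumption \ref{asp:error-bound-Li} applies at $\bm{z}^{k+1}$, and combined with the previous estimate it gives
\[
\mathrm{dist}\big(\bm{z}^{k+1},\mathcal{Z}^*\big)\ \leq\ \frac{\kappa\big(\sqrt{\rho}+\sqrt{\overline{\tau}_k}\big)}{\sigma_k}\,\big\|\bm{z}^{k+1}-\bm{z}^k\big\|_{\Lambda^k},
\]
and hence $\|\bm{z}^{k+1}-\bm{z}^k\|_{\Lambda^k}^2\geq\tfrac{\sigma_k^2}{\kappa^2(\sqrt{\rho}+\sqrt{\overline{\tau}_k})^2\,\overline{\tau}_{k+1}}\,\mathrm{dist}_{\Lambda^{k+1}}(\bm{z}^{k+1},\mathcal{Z}^*)^2$ after passing to the $\Lambda^{k+1}$-distance.

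Next I would derive the recursion \eqref{eq:dist-xybark}. Taking a reference point $\bm{\pi}^k:=\Pi^{\Lambda^k}_{\mathcal{Z}^*}(\bm{z}^k)$, with $\bm{y}$-component $\bm{y}^{k,*}$, and applying monotonicity of $\partial\ell$ to the pair $(\bm{z}^{k+1},\bm{\pi}^k)$ (using $(\bm{0},\bm{0})\in\partial\ell(\bm{\pi}^k)$ and the inclusion above) together with the three-point identity for $\|\cdot\|_{\Lambda^k}$ produces an estimate of the type
\[
\mathrm{dist}_{\Lambda^k}(\bm{z}^{k+1},\mathcal{Z}^*)^2\ \leq\ \mathrm{dist}_{\Lambda^k}(\bm{z}^{k},\mathcal{Z}^*)^2-\big\|\bm{z}^{k+1}-\bm{z}^k\big\|_{\Lambda^k}^2+2\sigma_k\big\langle\Delta^{k+1},\,\bm{y}^{k+1}-\bm{y}^{k,*}\big\rangle .
\]
Here the strictly positive proximal parameter is decisive: bounding the error term with $\|\sigma_k\Delta^{k+1}\|\leq\sqrt{\rho}\,\|\bm{z}^{k+1}-\bm{z}^k\|_{\Lambda^k}$ and estimating $\bm{y}$-displacements through $\|\bm{y}^{k+1}-\bm{y}^k\|\leq\tau_k^{-1/2}\|\bm{z}^{k+1}-\bm{z}^k\|_{\Lambda^k}$ and through $\mathrm{dist}(\bm{z}^{k+1},\mathcal{Z}^*)$ (via the second display above) introduces exactly the two quantities $\tfrac{2\sqrt{\rho}}{\sqrt{\tau_k}}$ and $\tfrac{2\kappa(\rho+\sqrt{\rho\overline{\tau}_k})}{\sigma_k}=\tfrac{2\sqrt{\rho}\,\kappa(\sqrt{\rho}+\sqrt{\overline{\tau}_k})}{\sigma_k}$ subtracted in the first factor of $\gamma_k$; this is precisely why $\tau_k$ enters the denominator of that coefficient and why $\sqrt{\tau_{\min}}-2\sqrt{\rho}>0$ is imposed. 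After reconciling the weighting matrices through $\tau_{k+1}\leq(1+\nu_k)\tau_k$ (so that $\mathrm{dist}_{\Lambda^{k+1}}(\cdot)^2\leq(1+\nu_k)\mathrm{dist}_{\Lambda^k}(\cdot)^2$), feeding in the lower bound on $\|\bm{z}^{k+1}-\bm{z}^k\|_{\Lambda^k}$ from the first paragraph, and solving the resulting quadratic inequality for $\mathrm{dist}_{\Lambda^{k+1}}(\bm{z}^{k+1},\mathcal{Z}^*)^2$, one should arrive at $\mathrm{dist}_{\Lambda^{k+1}}(\bm{z}^{k+1},\mathcal{Z}^*)\leq\mu_k\,\mathrm{dist}_{\Lambda^k}(\bm{z}^k,\mathcal{Z}^*)$ with $\mu_k=\sqrt{(1+\nu_k)/(1+\gamma_k)}$ and $\gamma_k$ as stated.

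The lower bound on $\gamma_k$ in (i), and hence $\limsup_k\mu_k<1$ as $\nu_k\to0$, then follows by inserting \eqref{para-conds} into the definition of $\gamma_k$: its first factor equals $1-\tfrac{2\kappa(\rho+\sqrt{\rho\overline{\tau}_k})}{\sigma_k}-\tfrac{2\sqrt{\rho}}{\sqrt{\tau_k}}$, in which $1-\tfrac{2\sqrt{\rho}}{\sqrt{\tau_k}}\geq\tfrac{\sqrt{\tau_{\min}}-2\sqrt{\rho}}{\sqrt{\tau_{\min}}}>0$ and, for $k$ large, the remaining $O(1/\sigma_k)$ term is dominated at level $(c-1)/c$ because $\liminf_k\sigma_k$ exceeds the threshold in \eqref{para-conds}; bounding the second factor by its value at $\overline{\tau}_{\max}$ completes the estimate. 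For part (ii): statement (i) gives R-linear decay of $\mathrm{dist}_{\Lambda^k}(\bm{z}^k,\mathcal{Z}^*)$, hence $\sum_k\mathrm{dist}_{\Lambda^k}(\bm{z}^k,\mathcal{Z}^*)<\infty$; moreover the recursion above also yields $\|\bm{z}^{k+1}-\bm{z}^k\|_{\Lambda^k}\leq C\,\mathrm{dist}_{\Lambda^k}(\bm{z}^k,\mathcal{Z}^*)$ with $C$ uniform in $k$ (solve the same quadratic for $\|\bm{z}^{k+1}-\bm{z}^k\|_{\Lambda^k}$, using $\sqrt{\rho/\tau_k}\leq\sqrt{\rho/\tau_{\min}}<1/2$). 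Since $\tau_k$ is bounded above and below, $\sum_k\|\bm{z}^{k+1}-\bm{z}^k\|<\infty$, so $\{\bm{z}^k\}$ is Cauchy, hence convergent, and in particular $\{\bm{y}^k\}$ converges.

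The step I expect to be the main obstacle is the derivation of \eqref{eq:dist-xybark} in a form clean enough to collapse into the contraction $\mathrm{dist}_{\Lambda^{k+1}}(\bm{z}^{k+1},\mathcal{Z}^*)\leq\mu_k\,\mathrm{dist}_{\Lambda^k}(\bm{z}^k,\mathcal{Z}^*)$. Three difficulties compound: the reference point natural for the monotonicity inequality (a projection of $\bm{z}^k$ onto $\mathcal{Z}^*$) is not the one that makes the left-hand side a genuine distance to $\mathcal{Z}^*$ at $\bm{z}^{k+1}$, so the cross term $2\sigma_k\langle\Delta^{k+1},\bm{y}^{k+1}-\bm{y}^{k,*}\rangle$ has to be split and re-estimated with care; the weighting matrix $\Lambda^k$ varies with $k$ and must be reconciled through the $\nu_k$-factors (whose product is finite); and the error-bound estimate must be calibrated so that the final quadratic inequality closes with exactly the stated $\gamma_k$, while keeping $\gamma_k$ strictly positive — which is precisely what forces the delicate parameter conditions in \eqref{para-conds}. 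By contrast, the auxiliary variable $\bm{w}^k$ and its update, indispensable for the global convergence in Theorem \ref{thm:convergence}, play no role in the rate analysis: here it suffices to use the weaker consequence $\|\sigma_k\Delta^{k+1}\|^2\leq\rho\,\|\bm{z}^{k+1}-\bm{z}^k\|_{\Lambda^k}^2$ of \eqref{ripALM-inexcond}.
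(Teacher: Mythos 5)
Your proposal is correct and follows essentially the same route as the paper's proof: the inclusion $(\bm{\theta}^{k+1},\bm{\xi}^{k+1})\in\partial\ell(\bm{z}^{k+1})$ plus the consequence $\|\sigma_k\Delta^{k+1}\|^2\le\rho\|\bm{z}^{k+1}-\bm{z}^k\|_{\Lambda^k}^2$ of \eqref{ripALM-inexcond}, the error bound to dominate $\mathrm{dist}(\bm{z}^{k+1},\mathcal{Z}^*)$ by the weighted successive difference, the Fej\'er-type recursion with the cross term $2\sigma_k\langle\Delta^{k+1},\bm{y}^{k+1}-\overline{\bm{y}}^{k}\rangle$ split exactly into the $2\sqrt{\rho}/\sqrt{\tau_k}$ and $2\kappa(\rho+\sqrt{\rho\overline{\tau}_k})/\sigma_k$ contributions, the $(1+\nu_k)$ reconciliation of the varying metrics, and the Cauchy argument for (ii); you also correctly note that $\bm{w}^k$ plays no role here and that $\Lambda^k$-projection onto $\mathcal{Y}^*\times\mathcal{X}^*$ reduces to the paper's componentwise Euclidean projections. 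The only cosmetic deviation is that you pass to the $\Lambda^{k+1}$-distance (picking up $\overline{\tau}_{k+1}$) before rather than after forming $\gamma_k$, which the $(1+\nu_k)$ factor absorbs without affecting the stated conclusion.
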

\begin{proof}
See Appendix \ref{sec:proof_linear_rate}.
\end{proof}

\begin{remark}[Comments on $\gamma_k$ and $\mu_{k}$]\label{rmkmu}
One can see from the expression of $\gamma_k$ in Theorem \ref{thm:Q-linear-rate} that, after a finite number of iterations, $\gamma_k$ becomes proportional to the squared penalty parameter $\sigma_k^2$, provided that $\rho$, $\{\sigma_k\}$ and $\{\tau_k\}$ satisfy the conditions in \eqref{para-conds}. Therefore, if $\sqrt{\tau_{\min}}-2\sqrt{\rho} > 0$, the convergence factor $\mu_{k}$ can be less than 1 when $\sigma_k$ is sufficiently large, and it approaches 0 as $\sigma_k$ tends to $+\infty$. This readily demonstrates that the sequence $\left\{(\bm{y}^{k},\bm{x}^{k})\right\}$ converges to the set of saddle points at a Q-(super)linear rate if $\sigma_k$ is sufficiently large and $\sqrt{\tau_{\min}} - 2\sqrt{\rho} > 0$. In practical implementations, one could simply choose an increasing sequence of $\{\sigma_{k}\}$ with $\sigma_{k} \uparrow +\infty$, and set $\tau_{\min}\geq4$ to ensure that $\sqrt{\tau_{\min}} - 2\sqrt{\rho} > 0$ for any $\rho \in [0,1)$. In contrast, as discussed in
\cite[Remark 2.1]{ylct2024corrected}, ciPALM (which is also of relative-type but uses the error criterion \eqref{eq:inexact_ciPALM}) should require $\rho<\frac{1}{3}$ to guarantee an asymptotic (super)linear convergence rate under the same error bound condition in Assumption \ref{asp:error-bound-Li}. This highlights another advantage of our ripALM, as it offers greater flexibility in choosing the tolerance parameter $\rho$.
\end{remark}

In summary, based on the discussions in Section \ref{sec:ripALM} and the convergence results established in this section, we see that ripALM enjoys stronger theoretical guarantees than ES-iALM and, unlike ciPALM, does not require correction steps.

\section{Applications of ripALM}\label{sec-applications}

In this section, we present applications of the ripALM in Algorithm \ref{algo:ripALM} to two important problems: the quadratically regularized optimal transport problem and the basis pursuit denoising problem.

\subsection{Quadratically regularized optimal transport}\label{sec:app2qrot}

As an important variant of the classical optimal transport problem, the quadratically regularized optimal transport (QROT) problem introduces a quadratic regularization (i.e., ridge regularization) term into the objective function. This renders the problem strongly convex while preserving the sparsity of the optimal transport plan. These favorable properties make QROT a compelling alternative to the popular entropically regularized optimal transport problem, especially in applications where the sparse structure is desirable; see, e.g., \cite{blondel2018smooth,essid2018quadratically,lorenz2021quadratically} for more details. Mathematically, the QROT problem is given as follows:
\begin{equation}\label{QROTprob}
\min\limits_{X}~~\frac{\lambda}{2}\|X\|_F^2 + \langle C, \,X\rangle
\quad\mathrm{s.t.}\quad
X\bm{1}_n = \bm{\alpha}, ~~X^{\top}\bm{1}_m = \bm{\beta}, ~~X\geq0,
\end{equation}
where $\lambda>0$ is a given regularization parameter, $C\in\mathbb{R}^{m \times n}_+$ is a given cost matrix, $\bm{\alpha}:=(\alpha_1,\cdots,\alpha_m)^{\top}\in\Sigma_m$ and  $\bm{\beta}:=(\beta_1,\cdots,\beta_n)^{\top}\in\Sigma_n$ are given probability vectors with $\Sigma_{m}$ (resp., $\Sigma_{n}$) denoting the $m$ (resp., $n$)-dimensional unit simplex, and $\bm{1}_{m}$ (resp., $\bm{1}_{n}$) denotes the $m$ (resp., $n$)-dimensional vector of all ones.

To apply the proposed ripALM, we first reformulate \eqref{QROTprob} as follows:
\begin{equation}\label{eq:QROTre}
\min_{X\in\mathbb{R}^{m \times n}}~~
f_{\texttt{q}}(X) := \frac{\lambda}{2}\left\|{X}\right\|_{F}^{2}
+ \left\langle {C}, {X}\right\rangle
+ \delta_{\mathbb{R}^{m\times n}_{+}}(X)
\quad\mathrm{s.t.}\quad
X\bm{1}_{n} = \bm{\alpha}, ~
X^{\top}\bm{1}_{m} = \bm{\beta}.
\end{equation}
Clearly, it falls into the form of problem \eqref{eq:mainprob} (upon the vectorization of $X$). Further, one can show that the dual problem of \eqref{eq:QROTre} is given by (modulo a minus sign)
\begin{equation}\label{eq:dualQROT}
\min\limits_{\bm{u},\,\bm{v}}~~
f_{\texttt{q}}^{*}\left(\bm{u}\bm{1}_{n}^{\top} + \bm{1}_{m}\bm{v}^{\top}\right)
- \bm{\alpha}^{\top}\bm{u}
- \bm{\beta}^{\top}\bm{v},
\end{equation}
and its associated augmented Lagrangian function is given by (using a similar deduction as Section \ref{sec:ripALM})
\begin{equation*}
\begin{aligned}
\mathcal{L}_{\sigma}(\bm{u},\bm{v},X)
= & -\bm{\alpha}^{\top}\bm{u} -\bm{\beta}^{\top}\bm{v} + \frac{1}{2\sigma}\left\|X + \sigma\left(\bm{u}\bm{1}_{n}^{\top} + \bm{1}_{m}\bm{v}^{\top}\right)\right\|_{F}^{2} \\
& - \frac{1}{2\sigma}\|X\|_{F}^{2} - \mathtt{M}_{\sigma f_{\texttt{q}}}\left(X + \sigma\left(\bm{u}\bm{1}_{n}^{\top} + \bm{1}_{m}\bm{v}^{\top}\right)\right),
\end{aligned}
\end{equation*}
where $\bm{u} \in \mathbb{R}^{m}$, $\bm{v} \in \mathbb{R}^{n}$ are the Lagrange multipliers corresponding to $X\bm{1}_{n} = \bm{\alpha}$ and $X^{\top}\bm{1}_{m} = \bm{\beta}$, respectively, and the conjugate function $f_{\texttt{q}}^{*}$ admits the following expression:
\begin{equation*}
f_{\texttt{q}}^{*}(Z)=\left\{
\begin{aligned}
&\delta_{\mathbb{R}^{m\times n}_{+}}\left(C-Z\right), && \mathrm{if}~~ \lambda = 0, \\[3pt]
&\frac{1}{2\lambda}\left\|\Pi_{\mathbb{R}^{m\times n}_{+}}\left(Z - C\right)\right\|_{F}^{2}, && \mathrm{if}~~ \lambda > 0.
\end{aligned}\right.
\end{equation*}
Thus, given an arbitrary initial {guess} $\left(\bm{u}^{0},\bm{v}^{0}, X^{0}\right) \in \mathbb{R}^{m} \times \mathbb{R}^{n} \times \mathbb{R}^{m\times n}$, the basic iterative scheme of our ripALM for solving \eqref{eq:dualQROT} reads as follows:
\begin{numcases}{}
\left(\bm{u}^{k+1}, \bm{v}^{k+1}\right) \approx \mathop{\arg\min}\limits_{\bm{u}\in\mathbb{R}^{m},\bm{v}\in\mathbb{R}^{n}}
\left\{\Psi_k(\bm{u},\bm{v})\right\}, \label{eq:ripALM_QROT_uvnext} \\[3pt]
X^{k+1} = \mathtt{prox}_{\sigma_{k} f_{\texttt{q}}}\left(X^k+\sigma_{k}\left(\bm{u}^{k+1}\bm{1}_{n}^{\top} + \bm{1}_{m}(\bm{v}^{k+1})^{\top}\right)\right),  \label{eq:ripALM_QROT_Xnext}
\end{numcases}
where
\begin{equation*}
\Psi_k(\bm{u},\bm{v}):= \mathcal{L}_{\sigma_{k}}(\bm{u},\bm{v},X^k) + \frac{\tau_{k}}{2\sigma_{k}}\left\|\bm{u}-\bm{u}^k\right\|^2 + \frac{\tau_{k}}{2\sigma_{k}}\left\|\bm{v}-\bm{v}^k\right\|^2.
\end{equation*}

To truly implement ripALM for solving problem \eqref{eq:QROTre}, it is essential to efficiently solve the subproblem \eqref{eq:ripALM_QROT_uvnext} to find $(\bm{u}^{k+1},\bm{v}^{k+1})$ satisfying the error criterion \eqref{ripALM-inexcond}. In the following, we shall describe how to apply a semismooth Newton ({\sc Ssn}) method to achieve this goal. For simplicity, we drop the index $k$ and explicitly rewrite the subproblem \eqref{eq:ripALM_QROT_uvnext} as follows:
\begin{equation}\label{eq:iPALM_QROT_Tuv}
\hspace{-2mm}
\min\limits_{\bm{u}\in\mathbb{R}^{m},\,\bm{v}\in\mathbb{R}^{n}}\left\{
\begin{aligned}
&\Psi(\bm{u},\bm{v}) :=   -\bm{\alpha}^{\top}\bm{u} -\bm{\beta}^{\top}\bm{v} + \frac{1}{2\sigma}\left\|\bar{X} + \sigma\left(\bm{u}\bm{1}_{n}^{\top} + \bm{1}_{m}\bm{v}^{\top}\right)\right\|_{F}^{2} - \frac{1}{2\sigma}\|\bar{X}\|_{F}^{2} \\
& \quad - \mathtt{M}_{\sigma f_{\texttt{q}}}\left(\bar{X} + \sigma\left(\bm{u}\bm{1}_{n}^{\top} + \bm{1}_{m}\bm{v}^{\top}\right)\right) + \frac{\tau}{2\sigma}\left\|\bm{u}-\bar{\bm{u}}\right\|^2 + \frac{\tau}{2\sigma}\left\|\bm{v}-\bar{\bm{v}}\right\|^2
\end{aligned}\right\},
\end{equation}
where $\bar{\bm{u}}$, $\bar{\bm{v}}$ and $\bar{X}$ are given. From the property of the Moreau envelope (see, for example, \cite[Proposition 12.30]{bc2017convex}), we see that $\Psi$ is strongly convex and continuously differentiable with the gradient
\begin{equation*}
\nabla \Psi(\bm{u},\bm{v}) =
\left[\begin{matrix}
\left(\mathtt{prox}_{\sigma f_{\texttt{q}}}\left(\bar{X}+\sigma\left(\bm{u}\bm{1}_{n}^{\top} + \bm{1}_{m}\bm{v}^{\top}\right)\right)\right)\bm{1}_{n} - \bm{\alpha} + \tau\sigma^{-1}\left(\bm{u} - \bar{\bm{u}}\right) \vspace{1mm}\\
\left(\mathtt{prox}_{\sigma f_{\texttt{q}}}\left(\bar{X}+\sigma\left(\bm{u}\bm{1}_{n}^{\top} + \bm{1}_{m}\bm{v}^{\top}\right)\right)\right)^{\top}\bm{1}_{m} - \bm{\beta} + \tau\sigma^{-1}\left(\bm{v} - \bar{\bm{v}}\right)
\end{matrix}\right].
\end{equation*}
Note that the proximal mapping $\mathtt{prox}_{\sigma f_{\texttt{q}}}(\cdot)$ can be easily computed as follows:
\begin{equation*}
\mathtt{prox}_{\sigma f_{\texttt{q}}}\left(Z\right)
= \frac{1}{1+\lambda\sigma}\Pi_{\mathbb{R}_{+}^{m\times n}}\left(Z - \sigma{C}\right), \quad \forall\,Z\in\mathbb{R}^{m\times n}.
\end{equation*}
Then, from the first-order optimality condition, solving problem \eqref{eq:iPALM_QROT_Tuv} is equivalent to solving the following non-smooth equation:
\begin{equation}\label{eq:SSN_equation}
\nabla \Psi(\bm{u},\bm{v}) = \bm{0}.
\end{equation}
In view of the nice property of $\nabla\Psi$, we are able to follow \cite{lst2018highly,lst2020asymptotically,lst2020efficient,ylct2024corrected} to apply a globally convergent and locally superlinearly convergent {\sc Ssn} method to solve \eqref{eq:SSN_equation}. Specifically, let $\mathcal{W}(\bm{u},\bm{v}) := \bar{X} + \sigma\left({\bm{u}\bm{1}_{n}^{\top} + \bm{1}_{m}\bm{v}^{\top} - C}\right)$ and define the multifunction $\widehat{\partial}(\nabla\Psi): \mathbb{R}^{m}\times\mathbb{R}^{n} \rightrightarrows \mathbb{R}^{(m+n)\times(m+n)}$ as follows:
\begin{equation*}
\widehat{\partial}(\nabla\Psi)(\bm{u},\bm{v})
:= \left\{H\in\mathbb{R}^{(m+n)\times(m+n)} ~\left\lvert~
\begin{aligned}
H:= &\;\frac{\sigma}{1+\lambda\sigma}\,B\mathrm{Diag}(\texttt{vec}(\Omega))B^{\top}
+ \frac{\tau}{\sigma}I_{m+n}, \\[3pt]
\forall&\,\Omega\in\partial{\Pi_{\mathbb{R}_{+}^{m\times n}}}(\mathcal{W}(\bm{u},\bm{v}))
\end{aligned}
\right.\right\},
\end{equation*}
where $B:=\begin{bmatrix} \bm{1}_n^{\top} \otimes I_m \\ I_n \otimes \bm{1}_m^{\top} \end{bmatrix}\in\mathbb{R}^{(m+n) \times mn}$ with ``$\otimes$" denoting the Kronecker product, $\texttt{vec}(\Omega)$ denotes the vectorization of $\Omega$ with $[\texttt{vec}(\Omega)]_{i+(j-1)m}=\Omega_{ij}$ for any $1\leq i\leq m$ and $1\leq j \leq n$, $\mathrm{Diag}(\bm{z})$ denotes the diagonal matrix whose $i$th diagonal element is given by $z_i$, and $\partial{\Pi_{\mathbb{R}_{+}^{m\times n}}}(Z)$ denotes the generalized Jacobian of the Lipschitz continuous mapping $\Pi_{\mathbb{R}_{+}^{m\times n}}$ at $Z$, which is defined by
\begin{equation*}
\partial\Pi_{\mathbb{R}_{+}^{m\times n}}(Z)
:= \left\{\Omega\in\mathbb{R}^{m\times n} ~\bigg|~
\Omega_{ij} \in \left\{
\begin{aligned}
&\{1\}, & \text{if} ~~Z_{ij} > 0, \\
&[0,1], & \text{if} ~~Z_{ij} = 0,  \\
&\{0\}, & \text{if} ~~Z_{ij} < 0,
\end{aligned}
\right.\quad \right\}.
\end{equation*}
Then, using similar arguments as in the proof of \cite[Proposition 4.1]{ylct2024corrected}, one can show that $\nabla\Psi(\cdot)$ is strongly semi-smooth with respect to $\widehat{\partial}(\nabla\Psi)(\cdot)$, and thus the {\sc Ssn} method is applicable. More importantly, for any $(\bm{u},\bm{v})\in\mathbb{R}^{m}\times\mathbb{R}^{n}$, all elements of $\widehat{\partial}(\nabla\Psi)(\bm{u},\bm{v})$ are positive definite. This ensures the direct applicability of the {\sc Ssn} method without the need for a specific regularity condition, such as the primal constraint nondegeneracy condition \cite{zst2010newton-cg}, highlighting the advantage of incorporating a proximal term. The detailed description of the {\sc Ssn} method for solving equation \eqref{eq:SSN_equation} is presented in Algorithm \ref{alg:SSN}. We refer readers to \cite[Theorem 3.6]{lst2018highly} for its detailed convergence results.

\begin{algorithm}[!]
	\caption{A semismooth Newton ({\sc Ssn}) method for solving equation \eqref{eq:SSN_equation}}\label{alg:SSN}
	\begin{algorithmic}
		\STATE \textbf{Input:} Choose  $\bar{\mu}\in(0,1)$, $\mu\in(0,1]$,
		$\eta\in(0,1/2)$, ${\delta} \in (0, 1)$, and an initial point $(\bm{u}^0,\bm{v}^0)\in \mathbb{R}^{m}\times\mathbb{R}^{n}$. Set $t = 0$.
		\WHILE{\textit{the termination criterion is not met}}
		
		\STATE \textbf{Step 1.} Compute $\nabla\Psi(\bm{u}^t,\bm{v}^t)$ and choose an element $H^t\in\widehat{\partial}(\nabla\Psi)(\bm{u}^{t},\bm{v}^{t})$. Solve the linear system
		\begin{equation}\label{eq:gen_dir_Newton}
			H^t\bm{d}^{t} = -\nabla\Psi(\bm{u}^t,\bm{v}^t),
		\end{equation}
		nearly exactly by the (sparse) Cholesky decomposition with forward and backward substitutions, or approximately by the preconditioned conjugate gradient (CG) method to find $\bm{d}^{t}:=(\bm{d}_{u}^{t},\bm{d}_{v}^{t})$ such that
		\begin{equation*}\label{eq:CG_criterion}
			\left\|H^t\bm{d}^{t}+\nabla\Psi(\bm{u}^t,\bm{v}^t)\right\|
			\leq \min \left\{\bar{\mu}, \,\left\|\nabla\Psi(\bm{u}^t,\bm{v}^t)\right\|^{1+\mu}\right\}.
		\end{equation*}
		
		\STATE \textbf{Step 2.} (\textbf{Line search}) Find a step size $\alpha_t:={\delta}^{i_{t}}$, where $i_{t}$ is the smallest non-negative integer such that
		\begin{equation*}
			\Psi\left(\bm{u}^{t} + {\delta}^{i_{t}}\bm{d}_{u}^{t}, \,\bm{v}^{t} + {\delta}^{i_{t}}\bm{d}_{v}^{t}\right)
			- \Psi(\bm{u}^{t},\bm{v}^{t})
			\leq \eta {\delta}^{i_{t}} \left\langle\nabla\Psi(\bm{u}^t,\bm{v}^t), \,\bm{d}^{t}\right\rangle.
		\end{equation*}
		
		\STATE \textbf{Step 3.} Set $\bm{u}^{t+1} = \bm{u}^{t} + \alpha_t\bm{d}_{u}^{t}$, $\bm{v}^{t+1} = \bm{v}^{t} + \alpha_t\bm{d}_{v}^{t}$, $t = t + 1$, and go to \textbf{Step 1.}
		
		\ENDWHILE
	\end{algorithmic}
\end{algorithm}

\subsection{Basis pursuit denoising}\label{sec:BPDN_detail}

The basis pursuit denoising (BPDN) problem is a fundamental optimization problem in signal processing and compressed sensing, which aims to recover a sparse signal from noisy linear measurements \cite{van2009probing}. Specifically, given a measurement $\bm{b}\in \mathbb{R}^m$, a dictionary $D\in \mathbb{R}^{m\times n}$, and an estimate of the noise level $\widehat{\kappa}>0$ in the data, the BPDN problem seeks a sparse solution $\bm{s}\in\mathbb{R}^n$ by solving the following constrained optimization problem:
\begin{equation}\label{eq-bpdn}
\min_{\bm{s}\in\mathbb{R}^n}~ \|\bm{s}\|_1 \quad \text{s.t.}\quad \|D\bm{s} - \bm{b}\| \leq \widehat{\kappa}. 
\end{equation}
Note that in the sparse recovery regime, one focuses on the case that $m \ll n$. By introducing an auxiliary variable $\bm{t}\in \mathbb{R}^m$, the above problem can be reformulated as
\begin{equation}\label{eq-bpdn-new}
\min_{\bm{s}\in\mathbb{R}^n, \,\bm{t}\in \mathbb{R}^m}\; \|\bm{s}\|_1 + \delta_{\mathcal{B}^2_{\widehat{\kappa}}}(\bm{t})
\quad \text{s.t.} \quad D\bm{s} - \bm{t} = \bm{b},
\end{equation}
where $\mathcal{B}^2_{\widehat{\kappa}}\subset\mathbb{R}^m$ denotes the $\ell_2$-norm ball centered at $\bm{0}$ with radius $\widehat{\kappa}$. Then, we see that the reformulated problem \eqref{eq-bpdn-new} is a special case of the general problem \eqref{eq:mainprob} by letting $\bm{x}:=(\bm{s};\bm{t})\in\mathbb{R}^{n+m}$, $f(\bm{x}):= \|\bm{s}\|_1 + \delta_{\mathcal{B}^2_{\widehat{\kappa}}}(\bm{t})$ and $A:=[D,-I]\in \mathbb{R}^{m\times (n+m)}$. Consequently, the proposed ripALM is directly applicable to solve the BPDN problem.

To apply ripALM, we derive the dual problem of \eqref{eq-bpdn-new} as follow (modulo a minus sign)
\begin{equation}\label{eq:BPDN-dual}
\min_{\bm{y}\in\mathbb{R}^{m}} ~~
\delta_{\mathcal{B}_1^\infty}({D}^{\top}\bm{y}) + \widehat{\kappa}\|\bm{y}\|
- \bm{b}^{\top}\bm{y},
\end{equation}
where $\mathcal{B}_1^\infty\subset\mathbb{R}^{n}$ denotes the unit $\ell_{\infty}$-norm ball centered at $\bm{0}$. The associated augmented Lagrangian function is given by
\begin{equation*}
\begin{aligned}
\mathcal{L}_{\sigma}(\bm{y},\bm{s},\bm{t}) =
& -\bm{b}^{\top}\bm{y} + \frac{1}{2\sigma}\big\|\bm{s}+\sigma{D}^{\top}\bm{y}\big\|^2 - \frac{1}{2\sigma}\|\bm{s}\|^2 - \mathtt{M}_{\sigma \|\cdot\|_1}\big(\bm{s}+\sigma{D}^{\top}\bm{y}\big) \\
& + \frac{1}{2\sigma}\big\|\bm{t}-\sigma\bm{y}\big\|^2
- \frac{1}{2\sigma}\|\bm{t}\|^2
- \mathtt{M}_{\sigma\delta_{\mathcal{B}_{\widehat{\kappa}}^2}}\big(\bm{t}-\sigma\bm{y}\big).
\end{aligned}
\end{equation*}
Thus, given an arbitrary initial guess $(\bm{y}^{0},\bm{s}^{0},\bm{t}^{0})\in\mathbb{R}^{m}\times\mathbb{R}^{n}\times\mathbb{R}^{m}$, 
the basic iterative scheme of ripALM for solving problem \eqref{eq:BPDN-dual} reads as follows:
\begin{numcases}{}
\bm{y}^{k+1} \approx \arg\min_{\bm{y}\in\mathbb{R}^{m}}
\left\{\mathcal{L}_{\sigma_k}(\bm{y},\bm{s}^{k},\bm{t}^{k})
+ \frac{\tau_{k}}{2\sigma_{k}}\|\bm{y}-\bm{y}^{k}\|^{2}\right\}, \label{eq:BPDN-y} \\
\bm{s}^{k+1} = \mathtt{prox}_{\sigma_{k}\|\cdot\|_1}\big(\bm{s}^{k}+\sigma_{k} D^{\top}\bm{y}^{k+1}\big), \label{eq:BPDN-x1} \\
\bm{t}^{k+1} = \Pi_{\mathcal{B}_{\widehat{\kappa}}^2}\big(\bm{t}^{k}-\sigma_{k}\bm{y}^{k+1}\big), \label{eq:BPDN-x2}
\end{numcases}
where $\Pi_{\mathcal{B}_{\widehat{\kappa}}^2}$ denotes the projection operator onto $\mathcal{B}_{\widehat{\kappa}}^2$.

To truly implement ripALM for solving problem \eqref{eq:BPDN-dual}, it is essential to efficiently solve the subproblem \eqref{eq:BPDN-y} to find $\bm{y}^{k+1}$ satisfying the relative error criterion \eqref{ripALM-inexcond}. In the following, we describe how to apply the {\sc Ssn} method to achieve this goal. For simplicity, we drop the index $k$ and explicitly rewrite the subproblem \eqref{eq:BPDN-y} as follows:
\begin{equation*}
\min_{\bm{y}\in\mathbb{R}^{m}} ~~
\left\{\begin{aligned}
\Phi(\bm{y}) :=
& -\bm{b}^{\top}\bm{y} + \frac{1}{2\sigma}\big\|\bar{\bm{s}}+\sigma{D}^{\top}\bm{y}\big\|^2 - \frac{1}{2\sigma}\|\bar{\bm{s}}\|^2 - \mathtt{M}_{\sigma \|\cdot\|_1}\big(\bar{\bm{s}}+\sigma{D}^{\top}\bm{y}\big) \\
& + \frac{1}{2\sigma}\big\|\bar{\bm{t}}-\sigma\bm{y}\big\|^2- \frac{1}{2\sigma}\|\bar{\bm{t}}\|^2 - \mathtt{M}_{\sigma \delta_{\mathcal{B}^2_{\widehat{\kappa}}}}\big(\bar{\bm{t}}-\sigma\bm{y}\big) + \frac{\tau}{2\sigma}\|\bm{y}-\bar{\bm{y}}\|^{2}
\end{aligned}\right\}.
\end{equation*}
where $\bar{\bm{s}}$, $\bar{\bm{t}}$ and $\bar{\bm{y}}$ are given vectors. From the property of the Moreau envelope, we see that $\Psi$ is strongly convex and continuously differentiable with the gradient
\begin{equation}\label{eq:iPALM_BPDN_ynext}
\begin{aligned}
\nabla\Phi(\bm{y})
= {D}\mathtt{prox}_{\sigma\|\cdot\|_{1}}\big(\bar{\bm{s}} + \sigma D^{\top}\bm{y}\big)
- \Pi_{{\mathcal{B}_{\widehat{\kappa}}^2}}\big(\bar{\bm{t}} - \sigma\bm{y}\big)
- \bm{b} + \frac{\tau}{\sigma}\big(\bm{y} - \bar{\bm{y}}\big).
\end{aligned}
\end{equation}
Then, from the first-order optimality condition, solving problem \eqref{eq:BPDN-y} is equivalent to solving the following non-smooth equation:
\begin{equation}\label{eq:optcond-y}
\nabla\Phi(\bm{y}) = 0.
\end{equation}
Next, we define the multifunction $\widehat{\partial}(\nabla\Phi):\mathbb{R}^{m}\rightrightarrows\mathbb{R}^{m}$ as follows:
\begin{equation}\label{defpatPhiBPDN}
\widehat{\partial}(\nabla\Phi)(\bm{y}) := \left\{H:= \sigma DUD^{\top} + \sigma V + \frac{\tau}{\sigma}I_{m} ~\left|~\,
\begin{aligned}
&\forall\,U\in\partial\mathtt{prox}_{\sigma\|\cdot\|_1}\big(\bar{\bm{s}}+\sigma D^{\top}\bm{y}\big),  \\
&\forall\,V\in\partial\Pi_{\mathcal{B}_{\widehat{\kappa}}^2}\big(\bar{\bm{t}}-\sigma\bm{y}\big).
\end{aligned} \right.\right\},
\end{equation}
where $\partial\mathtt{prox}_{\sigma\|\cdot\|_1}
:\mathbb{R}^{n}\rightrightarrows\mathbb{R}^{n \times n}$ is defined by
\begin{equation*}
\partial \mathtt{prox}_{\sigma\|\cdot\|_1 }(\bm{u}) := \left\{ \operatorname{Diag}(\bm{w}) ~ \left| ~
\begin{aligned}
& w_{i} = \{1\}, &\text{if}~~\left|u_{i}\right|>\sigma, \vspace{1mm} \\
& w_{i} \in \left[0, 1\right], &\text{if}~~\left|u_{i}\right|=\sigma, \\
& w_{i} = \left\{0\right\}, &\text{if}~~\left|u_{i}\right|<\sigma.
\end{aligned} \right.
\right\},
\end{equation*}
and $\partial\Pi_{\mathcal{B}_{\widehat{\kappa}}^2}
:\mathbb{R}^{m}\rightrightarrows\mathbb{R}^{m \times m}$ is defined by
\begin{equation*}
\partial \Pi_{\mathcal{B}_{\widehat{\kappa}}^2}(\bm{v}) =
\begin{cases}
\left\{\cfrac{\widehat{\kappa}}{\|\bm{v}\|}\left(I-\cfrac{\bm{v} \bm{v}^{\top}}{\|\bm{v}\|^2}\right)\right\},
&\text{if}~~\left\|\bm{v}\right\|>\widehat{\kappa}, \vspace{1mm} \\
\left\{I_m - \cfrac{t}{\widehat{\kappa}^2}\bm{v}\bm{v}^{\top} \,\mid\, 0 \leq t \leq 1\right\},
&\text{if}~~\left\|\bm{v}\right\|=\widehat{\kappa}, \vspace{1mm} \\
\big\{I_m\big\},
&\text{if}~~\left\|\bm{v}\right\|<\widehat{\kappa}.
\end{cases}
\end{equation*}
Using the relation $\Pi_{\mathcal{B}_{\widehat{\kappa}}^2}(\bm{x}) = \bm{x} - \mathtt{prox}_{\widehat{\kappa}\|\cdot\|_2}(\bm{x})$, Lemma 2.1 in \cite{zhang2020efficient}, and similar arguments to those in \cite[Proposition 4.1]{ylct2024corrected}, one can show that $\nabla\Phi$ is strongly semismooth with respect to the multifunction $\widehat{\partial}(\nabla\Phi)$. This property enables the application of the {\sc Ssn} method to solve \eqref{eq:optcond-y}. Since the algorithmic framework is identical to Algorithm \ref{alg:SSN}, we omit the detailed description for simplicity. Moreover, note from \eqref{defpatPhiBPDN} that, for any $\bm{y}\in\mathbb{R}^m$, all matrices in $\widehat{\partial}(\nabla \Phi)(\bm{y})$ are symmetric positive definite. This ensures the local superlinear (even quadratic) convergence rate of the {\sc Ssn} method, without requiring additional regularity conditions. These favorable theoretical properties form the basis for the encouraging numerical performance, as observed from our numerical results in the next section.

Finally, we would like to point out that the main computational cost of the {\sc Ssn} method for solving \eqref{eq:optcond-y} lies in solving a sequence of linear systems of the form
\begin{equation}\label{eq:gen_dir_Newton-BPDN}
H\bm{d} = -\nabla\Phi(\bm{y}),
\end{equation}
to obtain the Newton direction $\bm{d}$, where $H \in \widehat{\partial}(\nabla\Phi)(\bm{y})$ with $\widehat{\partial}(\nabla\Phi)$ defined in \eqref{defpatPhiBPDN}. Inspired by the techniques developed in \cite[Section 3.3]{lst2018highly}, we can exploit the special structure of $H$ to enhance the computational efficiency of solving \eqref{eq:gen_dir_Newton-BPDN}. However, the presence of the additional block $V$  in $H$ introduces nontrivial implementation difficulties. Detailed discussions on efficiently solving \eqref{eq:gen_dir_Newton-BPDN} are provided in Appendix \ref{sec:SMW_BPDN}.

\section{Numerical experiments}\label{sec:numexp}

In this section, we conduct numerical experiments to evaluate the performance of our ripALM in Algorithm \ref{algo:ripALM} for solving the QROT problem \eqref{QROTprob} and the BPDN problem \eqref{eq-bpdn}. Our experiments are organized as follows: 
\begin{itemize}
\item Section \ref{sec:qrot-exp} reports experiments on QROT. We first compare ripALM with {\sc Snipal} \cite{lst2020asymptotically} and ciPALM \cite{ylct2024corrected}, which represent the latest absolute-type and relative-type inexact pALMs, respectively, for solving QROT problems on image data. These comparisons are intended to illustrate how different error criteria and tolerance parameters affect the numerical performance of the inexact pALM. We then compare ripALM with Gurobi, Eckstein and Silva's method \cite{es2013practical} (denoted by ES-iALM), and a dual alternating direction method of multipliers (dADMM) presented in Appendix \ref{sec:ADMM}, for solving large-scale QROT problems. Here, Gurobi is one of the state-of-the-art commercial solvers, ES-iALM is a relative-type inexact version of the vanilla ALM, and dADMM is a widely used first-order method for large-scale constrained convex optimization; see, e.g, \cite{boyd2011distributed,gabay1976dual}.
	
\item Section \ref{sec:bpdn-exp} reports experiments on BPDN. Similar to the QROT experiments, we first compare ripALM with {\sc Snipal} and ciPALM to examine how different error criteria and tolerance parameters influence the numerical performance. We then compare ripALM with SPGL1 \cite{van2009probing}, Mosek, and the dADMM presented in Appendix \ref{sec:BPDN-dadmm}, for solving the BPDN problem on the UCI dataset. Here, SPGL1 is a state-of-the-art solver specifically designed for BPDN, while Mosek is a highly efficient commercial solver that can be used to solve the second-order cone programming reformulation of the BPDN problem; see Appendix \ref{sec:BPDN_SOCP} for more details.
\end{itemize}
All experiments are run in {\sc Matlab} R2023a on a PC with Intel processor i7-12700K@3.60GHz (with 12 cores and 20 threads) and 64GB of RAM, equipped with a Windows OS. The implementation details are given as follows.

\vspace{1mm}
\textbf{Termination conditions.} We first describe the termination conditions for QROT. Let $\mathcal{Z}(\bm{u},\bm{v},X) := C + \lambda X - \bm{u}\bm{1}_{n}^{\top} - \bm{1}_{m}\bm{v}^{\top}$. The Karush-Kuhn-Tucker (KKT) system for problem \eqref{QROTprob} and its dual \eqref{eq:dualQROT} is given by
\begin{equation}\label{kkt-QROT}
	\begin{array}{l}
		X\bm{1}_{n} = \bm{\alpha}, ~~
		X^{\top}\bm{1}_{m} = \bm{\beta}, ~~
		\langle X, \,\mathcal{Z}(\bm{u},\bm{v},X)\rangle = 0, ~~X\geq0,
		~~\mathcal{Z}(\bm{u},\bm{v},X) \geq 0.
	\end{array}
\end{equation}
Note that $(\bm{u},\bm{v},X)$ satisfies the KKT system \eqref{kkt-QROT} if and only if $X$ solves \eqref{QROTprob} and $(\bm{u}, \bm{v})$ solves \eqref{eq:dualQROT}, respectively. Based on \eqref{kkt-QROT}, we define the relative KKT residual for any $(X,\,\bm{u},\,\bm{v})$ as follows:
\begin{equation*}
	\Delta_{\rm kkt}^{\rm qrot}(\bm{u},\bm{v},X)
	:= \max\left\{\Delta_p^{\rm qrot}(X), \,\Delta_d^{\rm qrot}(\bm{u},\bm{v},X), \,\Delta_c^{\rm qrot}(\bm{u},\bm{v},X)\right\},
\end{equation*}
where
\begin{align*}
	&\;\Delta_p^{\rm qrot}(X):= \max\left\{\frac{\|X\bm{1}_{n}-\bm{\alpha}\|}{1+\|\bm{\alpha}\|}, \,\frac{\|X^{\top}\bm{1}_{m}-\bm{\beta}\|}{1+\|\bm{\beta}\|}, \,\frac{\|\min\{X, \,0\}\|_F}{1+\|X\|_F}\right\}, \\[5pt]
	&\; \Delta_d^{\rm qrot}(\bm{u},\bm{v},X):= \frac{\|\min\{\mathcal{Z}(\bm{u},\,\bm{v},\,X), \,0\}\|_F}{1+\|C\|_F},
	\quad
	\;\Delta_c^{\rm qrot}(\bm{u},\bm{v},X) := \frac{\left|\langle X, \,\mathcal{Z}(\bm{u},\,\bm{v},\,X)\rangle\right|}{1+\|C\|_F}.
\end{align*}
Moreover, we define the relative duality gap as follows:
\begin{equation*}
	\Delta_{\rm gap}^{\rm qrot}(\bm{u},\,\bm{v},\,X) := \cfrac{\left|\mathtt{pobj}(X) - \mathtt{dobj}(\bm{u},\bm{v})\right|}{1 + \left|\mathtt{pobj}(X)\right| + \left|\mathtt{dobj}(\bm{u},\bm{v})\right|},
\end{equation*}
where $\mathtt{pobj}(X) := f_{\texttt{q}}(X)$ and $\mathtt{dobj}(\bm{u},\bm{v}) := - f_{\texttt{q}}^{*}\left(\bm{u}\bm{1}_{n}^{\top} + \bm{1}_{m}\bm{v}^{\top}\right) + \bm{\alpha}^{\top}\bm{u} + \bm{\beta}^{\top}\bm{v}$. Using these relative residuals, we will terminate ripALM when it returns a point $(\bm{u}^k,\,\bm{v}^k,\,X^k)$ satisfying
\begin{equation*}
	\Delta_{\mathrm{res}}^{\rm qrot}(\bm{u}^k,\,\bm{v}^k,\,X^k):=\max\left\{\Delta_{\rm kkt}^{\rm qrot}(\bm{u}^k,\,\bm{v}^k,\,X^k), \,\Delta_{\rm gap}^{\rm qrot}(\bm{u}^k,\,\bm{v}^k,\,X^k) \right\} < 10^{-6}.
\end{equation*}

Similarly, we set up the termination conditions for BPDN based on the associated KKT system. Specifically, given an output solution $(\bm{s}, \bm{t}, \bm{y})$, we define 
\begin{align*}
\Delta_{p}^{\rm bpdn}(\bm{s},\bm{t})
&:= \cfrac{\left\|D\bm{s}-\bm{b}-\bm{t}\right\|}{1+\|\bm{b}\|}, \\
\Delta_{d}^{\rm bpdn}(\bm{s},\bm{t},\bm{y})
&:= \cfrac{\sqrt{\big\|\bm{s}-\mathtt{prox}_{\|\cdot\|_{1}}\left(\bm{s} + {D}^{\top}\bm{y}\right)\big\|^{2}
+ \big\|\bm{t} - \Pi_{\mathcal{B}_{\widehat{\kappa}}^2}\left(\bm{t} - \bm{y}\right)\big\|^{2}}}{1 + \|D\|_F}, \\
{\Delta}_{\rm gap}^{\rm bpdn}(\bm{s}, \bm{t}, \bm{y})
&:= \cfrac{\left|\mathtt{pobj}(\bm{s}, \bm{t}) - \mathtt{dobj}(\bm{y})\right|}{1 + \left|\mathtt{pobj}(\bm{s}, \bm{t})\right| + \left|\mathtt{dobj}(\bm{y})\right|},
\end{align*}
where $\mathtt{pobj}(\bm{s}, \bm{t}) := \|\bm{s}\|_1$ and $\mathtt{dobj}(\bm{y}) := -\widehat{\kappa}\|\bm{y}\| + \bm{b}^{\top}\bm{y}$. Then, we terminate ripALM when it returns $(\bm{s}^k, \bm{t}^k, \bm{y}^k)$ such that
\begin{equation*}
\Delta_{\rm res}^{\rm bpdn}(\bm{s}^k, \bm{t}^k, \bm{y}^k) := \max\left\{\Delta_{p}^{\rm bpdn}(\bm{s}^k, \bm{t}^k),
\,\Delta_{d}^{\rm bpdn}(\bm{s}^k, \bm{t}^k, \bm{y}^k),
\,\Delta_{\rm gap}^{\rm bpdn}(\bm{s}^k, \bm{t}^k, \bm{y}^k)\right\} < 10^{-6}.
\end{equation*}

\textbf{Initial points.} 
For comparisons among inexact pALMs, we initialize all methods at the origin in order to isolate the impact of inexactness. For comparisons with other representative methods in Section \ref{sec:qrot-exp} and Section \ref{sec:bpdn-exp}, we will employ a \textit{warm-start} strategy to improve the efficiency of ripALM.
Specifically, for the experiments in Section \ref{sec:qrot-exp}, we first employ an inexact Bregman proximal gradient method (iBPGM), with Sinkhorn's algorithm as a subsolver, for approximately solving the QROT problem \eqref{QROTprob} to generate an initial point for ripALM; see Appendix \ref{sec:iBPGM} for more details. We terminate iBPGM once it produces a point $(\bm{u}^k,\,\bm{v}^k,\,X^k)$ satisfying $\Delta_{\mathrm{res}}^{\rm qrot}(\bm{u}^k,\,\bm{v}^k,\,X^k)<10^{-3}$, or once the number of iterations reaches 500. Moreover, we also apply this warm-start strategy to ES-iALM in Section \ref{sec:qrot-exp} to ensure a fair comparison. For the experiments in Section \ref{sec:bpdn-exp}, we use dADMM in Appendix \ref{sec:BPDN-dadmm} to generate an initial point. We terminate dADMM once it produces a point $(\bm{s}^k,\bm{t}^k,\bm{y}^k)$ satisfying $\Delta_{\mathrm{res}}^{\rm bpdn}(\bm{s}^k,\bm{t}^k,\bm{y}^k)<10^{-3}$, or once the iteration number reaches 100.
Note that the time consumed by the above warm-starting phase is included in the total computational time for both ripALM and ES-iALM. In addition, the initial error variable $\bm{w}^0$ in ripALM and ES-iALM is always set to $\bm{0}$. 

For the {\sc Ssn} method in Algorithm \ref{alg:SSN}, we will initialize it with the origin at the first pALM iteration and then employ a \textit{warm-start} strategy thereafter. Specifically, at each pALM iteration, we initialize the {\sc Ssn} method with the approximate solution obtained by the {\sc Ssn} method in the previous pALM iteration.

\vspace{1mm}
\textbf{Hyperparameters.} Our ripALM as well as {\sc Snipal} and ciPALM require appropriate choices of $\{\sigma_k\}$ and $\{\tau_k\}$ to achieve superior performance. In our experiments, for all these algorithms, we simply set $\sigma_k=\min\big\{10^4, \,\max\big\{10^{-4}, \,1.5^{k}\big\}\big\}$ and $\tau_k\equiv5$ for all $k\geq0$, \textit{without} employing any sophisticated parameter tuning. These choices of $\{\tau_k\}$ and $\{\sigma_k\}$ can satisfy the conditions required in Theorems \ref{thm:convergence} and \ref{thm:Q-linear-rate}; see also Remark \ref{rmkmu}. Moreover, we would like to emphasize that adopting more refined updating rules for $\sigma_k$ and $\tau_k$ is certainly possible and may further improve the numerical performance. In addition, for the {\sc Ssn} method in Algorithm \ref{alg:SSN}, we set $\eta=10^{-4}$, $\delta=0.5$, $\bar{\mu}=10^{-3}$ and $\mu=0.2$.

\subsection{Experiments on QROT}\label{sec:qrot-exp}

In this subsection, we first compare ripALM with {\sc Snipal} \cite{lst2020asymptotically} and ciPALM \cite{ylct2024corrected} for solving the QROT problem \eqref{QROTprob} to illustrate how different error criteria and tolerance parameters affect the numerical performance of the inexact pALM. Notably, both ripALM and ciPALM are of relative-type and only require a single tolerance parameter $\rho$ in $[0,1)$, as shown in \eqref{ripALM-inexcond} and \eqref{eq:inexact_ciPALM}, respectively, while {\sc Snipal} is of absolute-type and requires two summable tolerance parameter sequences $\{\varepsilon_k\}\subseteq\mathbb{R}_+$ and $\{\delta_k\}\subseteq\mathbb{R}_+$, as shown in \eqref{eq:inexact_SNIPAL}. For simplicity, in our comparisons, we set $\varepsilon_k = \varepsilon_0/(k+1)^p,\; \delta_k = \delta_0/(k+1)^q$ with $\varepsilon_0=\delta_0\in\big\{1, 0.1, 0.01\big\}$ and $p,q\in\big\{1.1, \,2.1, \,3.1\big\}$ (hence, there are 27 combinations in total) for {\sc Snipal}. For ripALM and ciPALM, we choose $\rho\in\left\{0.01, 0.05, 0.1, 0.3, 0.5, 0.7, 0.9, 0.99, 0.999\right\}$ (9 choices).

We use images of resolution $64\times64$ from the \texttt{ClassicImages} class in the DOTmark collection \cite{ssg2016DOTmark}, which serves as a benchmark dataset for the OT problem and its variants, to generate QROT instances. Specifically, the \texttt{ClassicImages} class contains 10 different images, and each QROT instance is generated by randomly selecting two different images from this class. The cost matrix $C$ is constructed by calculating the squared Euclidean distances between pixels.

Table \ref{Table:Comp-iPALMs-QROT} presents the average results over 10 instances for the regularization parameter $\lambda\in\{1,\,0.1\}$. Since all three algorithms attain a similar solution accuracy and their computational times are proportional to the total number of {\sc Ssn} iterations, we report only the numbers of outer (pALM) and inner ({\sc Ssn}) iterations, denoted by ``\# (\#)", for a clearer comparison. From the results, it is not surprising to see that the performance of all algorithms depends on the choice of tolerance parameters. In particular, ripALM and ciPALM involve only a \textit{single} tolerance parameter $\rho$ in the smaller interval $[0,1)$, and are therefore easier to tune through a simple one-dimensional grid search. In contrast, {\sc Snipal} requires a three-dimensional grid search over parameters chosen from a much larger range. This supports the practical motivation for developing a relative-type error criterion that avoids prescribing summable tolerance sequences and is more convenient to implement. Moreover, ripALM consistently outperforms ciPALM in terms of the total number of {\sc Ssn} iterations and exhibits more robust performance, especially when a larger $\rho\geq0.5$ is used. This difference may be attributed to the additional correction step required by ciPALM to guarantee convergence, which tends to considerably increase the number of {\sc Ssn} iterations\footnote{One possible reason is that the variable obtained after the correction step may no longer serve as a suitable initial point for the {\sc Ssn} method in the next iteration.} needed to satisfy its error criterion \eqref{eq:inexact_ciPALM}, even though these extra iterations do not contribute to the progress of the pALM iterations. These observations further support the advantage of the vanilla inexact pALM framework advocated in this paper.

\begin{table}[ht]
\caption{Comparisons among {\sc Snipal}, ciPALM, and ripALM under different choices of tolerance parameters for solving the QROT problem \eqref{QROTprob} with $\lambda\in\{1, 0.1\}$, where the test instances are generated using images of resolution $64\times64$ from the \texttt{ClassicImages} class in the DOTmark collection.  }\label{Table:Comp-iPALMs-QROT}
\centering \tabcolsep 6.5pt
\scalebox{0.78}{
	\begin{tabular}{clclcl|cl|cl}
		\toprule
		\multicolumn{10}{c}{$\lambda = 1$} \\
		\cmidrule(lr){1-10}
		\multicolumn{6}{c}{{\sc Snipal}} & \multicolumn{2}{|c}{{ciPALM}} & \multicolumn{2}{|c}{{ripALM}}\\
		\cmidrule(lr){1-6} \cmidrule(lr){7-8} \cmidrule(lr){9-10}
		$(\varepsilon_{0}\!=\!\delta_{0},p,q)$ & \# (\#) & $(\varepsilon_{0}\!=\!\delta_{0},p,q)$ & \# (\#) & $(\varepsilon_{0}\!=\!\delta_{0},p,q)$ & \# (\#) & $\rho$ & \# (\#) & $\rho$ & \# (\#) \\
		\cmidrule(lr){1-6} \cmidrule(lr){7-8} \cmidrule(lr){9-10}
(1,\,1.1,\,1.1) & 17 (45) & (0.1,\,1.1,\,1.1) & 17 (53) & (0.01,\,1.1,\,1.1) & 17 (58) & 0.999 & 17 (165) & 0.999 & 17 (37)  \\ 
(1,\,1.1,\,2.1) & 17 (51) & (0.1,\,1.1,\,2.1) & 17 (57) & (0.01,\,1.1,\,2.1) & 17 (61) & 0.99 & 17 (165) & 0.99 & 17 (37)  \\ 
(1,\,1.1,\,3.1) & 17 (55) & (0.1,\,1.1,\,3.1) & 17 (59) & (0.01,\,1.1,\,3.1) & 17 (63) & 0.9 & 17 (162) & 0.9 & 17 (38)  \\ 
(1,\,2.1,\,1.1) & 17 (45) & (0.1,\,2.1,\,1.1) & 17 (53) & (0.01,\,2.1,\,1.1) & 17 (58) & 0.7 & 17 (139) & 0.7 & 17 (38)  \\ 
(1,\,2.1,\,2.1) & 17 (51) & (0.1,\,2.1,\,2.1) & 17 (57) & (0.01,\,2.1,\,2.1) & 17 (61) & 0.5 & 17 (126) & 0.5 & 17 (39)  \\ 
(1,\,2.1,\,3.1) & 17 (55) & (0.1,\,2.1,\,3.1) & 17 (59) & (0.01,\,2.1,\,3.1) & 17 (63) & 0.3 & 17 (100) & 0.3 & 17 (40)  \\ 
(1,\,3.1,\,1.1) & 17 (45) & (0.1,\,3.1,\,1.1) & 17 (53) & (0.01,\,3.1,\,1.1) & 17 (58) & 0.1 & 17 (75) & 0.1 & 17 (42)  \\ 
(1,\,3.1,\,2.1) & 17 (51) & (0.1,\,3.1,\,2.1) & 17 (57) & (0.01,\,3.1,\,2.1) & 17 (61) & 0.05 & 17 (65) & 0.05 & 17 (44)  \\ 
(1,\,3.1,\,3.1) & 17 (55) & (0.1,\,3.1,\,3.1) & 17 (59) & (0.01,\,3.1,\,3.1) & 17 (63) & 0.01 & 17 (56) & 0.01 & 17 (48)  \\ 
		\bottomrule  
		\vspace{-2mm} \\
		\toprule
		\multicolumn{10}{c}{$\lambda = 0.1$} \\
		\cmidrule(lr){1-10}
		\multicolumn{6}{c}{{\sc Snipal}} & \multicolumn{2}{|c}{{ciPALM}} & \multicolumn{2}{|c}{{ripALM}}\\
		\cmidrule(lr){1-6} \cmidrule(lr){7-8} \cmidrule(lr){9-10}
		$(\varepsilon_{0}\!=\!\delta_{0},p,q)$ & \# (\#) & $(\varepsilon_{0}\!=\!\delta_{0},p,q)$ & \# (\#) & $(\varepsilon_{0}\!=\!\delta_{0},p,q)$ & \# (\#) & $\rho$ & \# (\#) & $\rho$ & \# (\#) \\
		\cmidrule(lr){1-6} \cmidrule(lr){7-8} \cmidrule(lr){9-10}
(1,\,1.1,\,1.1) & 19 (86) & (0.1,\,1.1,\,1.1) & 19 (96) & (0.01,\,1.1,\,1.1) & 19 (100) & 0.999 & 19 (236) & 0.999 & 19 (77)  \\ 
(1,\,1.1,\,2.1) & 19 (93) & (0.1,\,1.1,\,2.1) & 19 (99) & (0.01,\,1.1,\,2.1) & 19 (103) & 0.99 & 19 (247) & 0.99 & 19 (77)  \\ 
(1,\,1.1,\,3.1) & 19 (96) & (0.1,\,1.1,\,3.1) & 19 (101) & (0.01,\,1.1,\,3.1) & 19 (106) & 0.9 & 19 (235) & 0.9 & 19 (77)  \\ 
(1,\,2.1,\,1.1) & 19 (86) & (0.1,\,2.1,\,1.1) & 19 (96) & (0.01,\,2.1,\,1.1) & 19 (100) & 0.7 & 19 (213) & 0.7 & 19 (78)  \\ 
(1,\,2.1,\,2.1) & 19 (93) & (0.1,\,2.1,\,2.1) & 19 (99) & (0.01,\,2.1,\,2.1) & 19 (103) & 0.5 & 19 (178) & 0.5 & 19 (78)  \\ 
(1,\,2.1,\,3.1) & 19 (96) & (0.1,\,2.1,\,3.1) & 19 (101) & (0.01,\,2.1,\,3.1) & 19 (106) & 0.3 & 19 (150) & 0.3 & 19 (80)  \\ 
(1,\,3.1,\,1.1) & 19 (86) & (0.1,\,3.1,\,1.1) & 19 (96) & (0.01,\,3.1,\,1.1) & 19 (100) & 0.1 & 19 (112) & 0.1 & 19 (82)  \\ 
(1,\,3.1,\,2.1) & 19 (93) & (0.1,\,3.1,\,2.1) & 19 (99) & (0.01,\,3.1,\,2.1) & 19 (103) & 0.05 & 19 (109) & 0.05 & 19 (84)  \\ 
(1,\,3.1,\,3.1) & 19 (96) & (0.1,\,3.1,\,3.1) & 19 (101) & (0.01,\,3.1,\,3.1) & 19 (106) & 0.01 & 19 (100) & 0.01 & 19 (89)  \\ 
		\bottomrule
\end{tabular}}
\end{table}

To further evaluate the numerical performance of ripALM, we compare ripALM with Gurobi (version 12.0.3), dADMM, and ES-iALM for solving large-scale QROT problems. For ripALM, we set $\rho=0.99$ based on the numerical observations from Table \ref{Table:Comp-iPALMs-QROT}. For ES-iALM, we adopt the same settings for $\rho$, ${\sigma_{k}}$, and the termination conditions as those used in ripALM. Moreover, we employ FISTA \cite{beck2009fast} to solve the ALM-subproblems, with the maximum number of \textit{total} inner iterations set to 15000.\footnote{Since ES-iALM does not involve a proximal term in its subproblem, the {\sc Ssn} method may not be applicable, as the nonsingularity of the generalized Jacobian cannot be guaranteed. Therefore, we employ FISTA as the subsolver in ES-iALM.} For Gurobi, we use its default termination conditions and set the termination tolerances as $10^{-6}$, aligning it with our tolerance. For dADMM, we initialize the penalty parameter as $\sigma_0=0.01\|C\|_{F}^{-1}$, and dynamically adjust it based on the primal-dual residuals, following the approach described in \cite[Section 5.1]{ylst2021fast}, to achieve superior performance. Moreover, we will terminate dADMM when it returns a point $(\bm{u}^k,\bm{v}^k,X^k)$ satisfying $\Delta_{\mathrm{res}}^{\rm qrot}(\bm{u}^k,\bm{v}^k,X^k)<10^{-6}$, or its number of iterations reaches 10000.

We follow \cite[Section 5.1]{yt2023inexact} to generate a random instance. First, we generate two discrete probability distributions $\big\{(\alpha_i, \bm{p}_i)\in \mathbb{R}_+\times\mathbb{R}^3 : i = 1,\cdots,m \big\}$ and $\big\{ (\beta_j, \bm{q}_j)\in \mathbb{R}_+\times\mathbb{R}^3 : j = 1,\cdots,n\big\}$. Here, $\bm{\alpha}:=(\alpha_1, \cdots, \alpha_{m})^{\top}$ and $\bm{\beta}:=(\beta_1, \cdots, \beta_{n})^{\top}$ are probabilities/weights, which are generated from the uniform distribution on the open interval $(0,1)$ and further normalized such that $\sum^{m}_i\alpha_i=\sum^{n}_j\beta_j=1$. Moreover, $\{\bm{p}_i\}$ and $\{\bm{q}_j\}$ are support points whose entries are drawn from a five-component multivariate Gaussian mixture distribution, with a mean vector $(-20,10,0,10,20)^{\top}$ and a variance vector $(5,5,5,5,5)^{\top}$, using randomly generated weights. Then, the cost matrix $C$ is generated by $c_{ij}=\|\bm{p}_i-\bm{q}_j\|^2$ for $1\leq i\leq m$ and $1\leq j\leq n$ and normalized by dividing (element-wise) by its maximal entry.

We then generate a set of random instances with $m = n\in \{1000,\,2000,\dots,10000\}$. For each $m$, we generate 10 instances with different random seeds, and present the average results of Gurobi, dADMM, ES-iALM, and ripALM in Table \ref{Table:vsGuADMM}, with $\lambda\in\{1,\,0.1\}$. We observe that both ripALM and Gurobi are able to solve the tested problems accurately, in the sense that the residual $\Delta_{\mathrm{res}}^{\rm qrot}$ falls below $10^{-6}$. In contrast, ES-iALM often fails to attain solutions of comparable accuracy, even after reaching its maximum iteration limit and incurring substantially higher computational costs. This is mainly due to the limited efficiency of the first-order subsolver used in ES-iALM. Moreover, dADMM performs even worse: despite consuming a similar amount of computational time as ES-iALM, it yields solutions of considerably lower quality. In addition, Gurobi can also be time-consuming and memory-consuming when problem size is large. For example, when $m=n=10000$, the resulting QP contains $10^8$ nonnegative variables and 20000 equality constraints. In this case, Gurobi is about $1.5\!\sim\!3$ times slower than our ripALM. 

\begin{table}[ht]
\caption{Comparisons among Gurobi, dADMM, ES-iALM, and ripALM for solving the QROT problem \eqref{QROTprob} with $\lambda\in\{1, 0.1\}$. In the table, ``$\Delta_{\mathrm{res}}^{\rm qrot}$" denotes the terminating $\Delta_{\mathrm{res}}^{\rm qrot}(\bm{u}^k,\bm{v}^k,X^k)$; ``\#" denotes the number of iterations (the total number of {\sc Ssn}/FISTA iterations is given in the bracket); ``\texttt{time}" denotes the computational time.  }\label{Table:vsGuADMM}
\renewcommand\arraystretch{1}
\centering \tabcolsep 4.5pt
\scalebox{0.76}{
\begin{tabular}{ccllcllcllcll}
\toprule
		& \multicolumn{3}{c}{Gurobi} & \multicolumn{3}{c}{dADMM} & \multicolumn{3}{c}{ES-iALM} & \multicolumn{3}{c}{ripALM} \\
\cmidrule(l){2-4} \cmidrule(l){5-7} \cmidrule(l){8-10} \cmidrule(l){11-13}
		$m=n$ & $\Delta_{\mathrm{res}}^{\rm qrot}$ & \# & \texttt{time} & $\Delta_{\mathrm{res}}^{\rm qrot}$ & \# & \texttt{time} & $\Delta_{\mathrm{res}}^{\rm qrot}$ & \# & \texttt{time} & $\Delta_{\mathrm{res}}^{\rm qrot}$ & \# & \texttt{time} \\
\midrule
\multicolumn{13}{c}{$\lambda=1$}   \\
\midrule
1000 & 2.32e-07 & 33 & 4.94  & 2.71e-06 & 10000 & 17.91  & 9.90e-07 & 16 (12600) & 15.68  & 4.98e-07 & 16 (37) & 1.81 \\ 
2000 & 3.16e-07 & 37 & 21.86  & 1.20e-05 & 10000 & 106.38  & 2.18e-06 & 16 (14965) & 107.76  & 6.70e-07 & 17 (52) & 8.80 \\ 
3000 & 3.55e-07 & 39 & 55.52  & 2.11e-05 & 10000 & 252.32  & 5.62e-06 & 16 (15000) & 253.64  & 5.40e-07 & 17 (56) & 20.68 \\ 
4000 & 6.86e-07 & 42 & 110.57  & 4.68e-05 & 10000 & 451.16  & 1.32e-05 & 15 (15000) & 454.48  & 6.12e-07 & 17 (66) & 39.90 \\ 
5000 & 6.09e-07 & 43 & 178.82  & 7.32e-05 & 10000 & 707.77  & 6.69e-06 & 15 (15000) & 709.99  & 6.61e-07 & 17 (67) & 64.28 \\ 
6000 & 4.79e-07 & 45 & 275.56  & 1.04e-04 & 10000 & 1016.18  & 2.01e-05 & 15 (15000) & 1019.01  & 6.08e-07 & 17 (71) & 96.38 \\ 
7000 & 4.69e-07 & 45 & 389.91  & 1.32e-04 & 10000 & 1382.35  & 2.54e-05 & 15 (15000) & 1385.37  & 5.23e-07 & 18 (74) & 136.98 \\ 
8000 & 5.47e-07 & 48 & 628.24  & 1.56e-04 & 10000 & 1804.32  & 1.05e-04 & 15 (15000) & 1811.85  & 4.73e-07 & 18 (81) & 190.85 \\ 
9000 & 6.09e-07 & 49 & 734.84  & 2.16e-04 & 10000 & 2289.86  & 2.52e-05 & 14 (15000) & 2293.79  & 6.00e-07 & 18 (82) & 244.17 \\ 
10000 & 4.36e-07 & 50 & 949.28  & 2.53e-04 & 10000 & 2816.87  & 3.02e-05 & 14 (15000) & 2815.60  & 6.08e-07 & 18 (85) & 308.99 \\ 
\midrule
\multicolumn{13}{c}{$\lambda=0.1$}   \\
\midrule
1000 & 4.74e-07 & 32 & 4.98  & 3.17e-05 & 10000 & 17.74  & 4.74e-05 & 16 (15000) & 17.77  & 6.37e-07 & 18 (66) & 2.12 \\ 
2000 & 3.61e-07 & 38 & 22.00  & 1.39e-04 & 10000 & 106.15  & 1.55e-05 & 15 (15000) & 105.57  & 5.96e-07 & 18 (78) & 11.08 \\ 
3000 & 4.69e-07 & 40 & 56.67  & 2.78e-04 & 10000 & 251.36  & 1.72e-05 & 15 (15000) & 254.22  & 6.41e-07 & 19 (84) & 29.38 \\ 
4000 & 4.20e-07 & 43 & 114.19  & 5.01e-04 & 10000 & 450.94  & 4.11e-05 & 15 (15000) & 454.35  & 6.37e-07 & 19 (93) & 61.01 \\ 
5000 & 5.39e-07 & 46 & 194.45  & 7.26e-04 & 10000 & 706.19  & 4.63e-05 & 15 (15000) & 716.31  & 6.58e-07 & 19 (101) & 109.55 \\ 
6000 & 1.33e-06 & 45 & 278.79  & 1.08e-03 & 10000 & 1017.11  & 6.80e-05 & 15 (15000) & 1030.13  & 6.47e-07 & 20 (109) & 171.26 \\ 
7000 & 8.21e-07 & 47 & 395.60  & 1.30e-03 & 10000 & 1382.91  & 1.08e-04 & 14 (15000) & 1399.50  & 5.65e-07 & 20 (117) & 255.10 \\ 
8000 & 7.15e-07 & 48 & 604.52  & 1.62e-03 & 10000 & 1803.87  & 1.53e-04 & 14 (15000) & 1824.82  & 6.29e-07 & 20 (119) & 333.22 \\ 
9000 & 6.89e-07 & 50 & 747.40  & 2.15e-03 & 10000 & 2283.73  & 2.01e-04 & 14 (15000) & 2309.05  & 5.80e-07 & 21 (129) & 472.94 \\ 
10000 & 1.10e-06 & 52 & 1026.56  & 2.43e-03 & 10000 & 2814.83  & 2.23e-04 & 14 (15000) & 2849.56  & 5.01e-07 & 21 (137) & 626.74 \\  
\bottomrule
\end{tabular}}
\end{table}

\subsection{Experiments on BPDN}\label{sec:bpdn-exp}

In this section, we evaluate the performance of different algorithms for solving the BPDN problem \eqref{eq-bpdn}. We use the datasets \texttt{housing7} and \texttt{mpg7} from the UCI data repository\footnote{Available at: \url{https://www.csie.ntu.edu.tw/~cjlin/libsvmtools/datasets/regression.html}} to generate the test instances $(D, \bm{b})$, where $D$ has dimensions $506\times77520$ for \texttt{housing7} and $392\times3432$ for \texttt{mpg7}. Here, \texttt{housing7} and \texttt{mpg7} are expanded versions of the original datasets, with the suffix `7' indicating that a polynomial of degree 7 is used to generate the basis functions; see \cite[Section 4.1]{lst2018highly} for more details. The parameter $\widehat{\kappa}$ in \eqref{eq-bpdn} is set to $\widehat{\kappa}=\delta\|\bm{b}\|$ with $\delta \in \{0.01, \,0.1\}$.

We first examine how different error criteria and tolerance parameters influence the numerical performance of different inexact pALMs. The tolerance parameters are chosen in the same way as in Table \ref{Table:Comp-iPALMs-QROT}. Table \ref{Table:Comp-iPALMs-BPDN} presents the results for \texttt{housing7}. As in the QROT experiments, the performance of all three algorithms is affected by the choice of tolerance parameters, while ripALM and ciPALM require less tuning effort as they involve only a single tolerance parameter $\rho\in[0,1)$. Moreover, ripALM consistently outperforms ciPALM in terms of the total number of {\sc Ssn} iterations and shows better efficiency and robustness.

\begin{table}[ht]
\caption{Comparisons among {\sc Snipal}, ciPALM and ripALM under different choices of tolerance parameters for solving BPDN problem \eqref{eq-bpdn} with $\widehat{\kappa} = \delta\|\bm{b}\|$ and $\delta\in\{0.1, 0.01\}$, using \texttt{housing7} from the UCI data repository. }\label{Table:Comp-iPALMs-BPDN}
	\centering \tabcolsep 6.3pt
	\scalebox{0.78}{
		\begin{tabular}{clclcl|cl|cl}
			\toprule
			\multicolumn{10}{c}{$\delta = 0.1$} \\
			\cmidrule(lr){1-10}
			\multicolumn{6}{c}{{\sc Snipal}} & \multicolumn{2}{|c}{{ciPALM}} & \multicolumn{2}{|c}{{ripALM}}\\
			\cmidrule(lr){1-6} \cmidrule(lr){7-8} \cmidrule(lr){9-10}
			$(\varepsilon_{0}\!=\!\delta_{0},p,q)$ & \# (\#) & $(\varepsilon_{0}\!=\!\delta_{0},p,q)$ & \# (\#) & $(\varepsilon_{0}\!=\!\delta_{0},p,q)$ & \# (\#) & $\rho$ & \# (\#) & $\rho$ & \# (\#) \\
			\cmidrule(lr){1-6} \cmidrule(lr){7-8} \cmidrule(lr){9-10}
(1,\,1.1,\,1.1) & 19 (71) & (0.1,\,1.1,\,1.1) & 19 (78) & (0.01,\,1.1,\,1.1) & 19 (84) & 0.999 & 19 (149) & 0.999 & 19 (67)  \\ 
(1,\,1.1,\,2.1) & 19 (75) & (0.1,\,1.1,\,2.1) & 19 (84) & (0.01,\,1.1,\,2.1) & 19 (86) & 0.99 & 19 (149) & 0.99 & 19 (67)  \\ 
(1,\,1.1,\,3.1) & 19 (78) & (0.1,\,1.1,\,3.1) & 19 (85) & (0.01,\,1.1,\,3.1) & 19 (94) & 0.9 & 19 (149) & 0.9 & 19 (64)  \\ 
(1,\,2.1,\,1.1) & 19 (75) & (0.1,\,2.1,\,1.1) & 19 (87) & (0.01,\,2.1,\,1.1) & 19 (85) & 0.7 & 19 (132) & 0.7 & 19 (64)  \\ 
(1,\,2.1,\,2.1) & 19 (77) & (0.1,\,2.1,\,2.1) & 19 (84) & (0.01,\,2.1,\,2.1) & 19 (86) & 0.5 & 19 (132) & 0.5 & 19 (64)  \\ 
(1,\,2.1,\,3.1) & 19 (80) & (0.1,\,2.1,\,3.1) & 19 (90) & (0.01,\,2.1,\,3.1) & 19 (94) & 0.3 & 19 (94) & 0.3 & 19 (67)  \\ 
(1,\,3.1,\,1.1) & 19 (83) & (0.1,\,3.1,\,1.1) & 19 (84) & (0.01,\,3.1,\,1.1) & 19 (91) & 0.1 & 19 (87) & 0.1 & 19 (76)  \\ 
(1,\,3.1,\,2.1) & 19 (83) & (0.1,\,3.1,\,2.1) & 19 (84) & (0.01,\,3.1,\,2.1) & 19 (92) & 0.05 & 18 (76) & 0.05 & 19 (74)  \\ 
(1,\,3.1,\,3.1) & 19 (84) & (0.1,\,3.1,\,3.1) & 19 (90) & (0.01,\,3.1,\,3.1) & 19 (93) & 0.01 & 19 (76) & 0.01 & 19 (73)  \\ 
			\bottomrule  
			\vspace{-2mm} \\
			\toprule
			\multicolumn{10}{c}{$\delta = 0.01$} \\
			\cmidrule(lr){1-10}
			\multicolumn{6}{c}{{\sc Snipal}} & \multicolumn{2}{|c}{{ciPALM}} & \multicolumn{2}{|c}{{ripALM}}\\
			\cmidrule(lr){1-6} \cmidrule(lr){7-8} \cmidrule(lr){9-10}
			$(\varepsilon_{0}\!=\!\delta_{0},p,q)$ & \# (\#) & $(\varepsilon_{0}\!=\!\delta_{0},p,q)$ & \# (\#) & $(\varepsilon_{0}\!=\!\delta_{0},p,q)$ & \# (\#) & $\rho$ & \# (\#) & $\rho$ & \# (\#) \\
			\cmidrule(lr){1-6} \cmidrule(lr){7-8} \cmidrule(lr){9-10}
(1,\,1.1,\,1.1) & 24 (156) & (0.1,\,1.1,\,1.1) & 24 (159) & (0.01,\,1.1,\,1.1) & 24 (163) & 0.999 & 24 (537) & 0.999 & 24 (137)  \\ 
(1,\,1.1,\,2.1) & 24 (157) & (0.1,\,1.1,\,2.1) & 24 (161) & (0.01,\,1.1,\,2.1) & 24 (170) & 0.99 & 24 (537) & 0.99 & 24 (137)  \\ 
(1,\,1.1,\,3.1) & 24 (158) & (0.1,\,1.1,\,3.1) & 24 (165) & (0.01,\,1.1,\,3.1) & 24 (170) & 0.9 & 24 (537) & 0.9 & 24 (137)  \\ 
(1,\,2.1,\,1.1) & 24 (158) & (0.1,\,2.1,\,1.1) & 24 (170) & (0.01,\,2.1,\,1.1) & 24 (176) & 0.7 & 24 (310) & 0.7 & 24 (138)  \\ 
(1,\,2.1,\,2.1) & 24 (158) & (0.1,\,2.1,\,2.1) & 24 (170) & (0.01,\,2.1,\,2.1) & 24 (176) & 0.5 & 24 (256) & 0.5 & 24 (137)  \\ 
(1,\,2.1,\,3.1) & 24 (161) & (0.1,\,2.1,\,3.1) & 24 (172) & (0.01,\,2.1,\,3.1) & 24 (176) & 0.3 & 24 (265) & 0.3 & 24 (137)  \\ 
(1,\,3.1,\,1.1) & 24 (164) & (0.1,\,3.1,\,1.1) & 24 (176) & (0.01,\,3.1,\,1.1) & 24 (185) & 0.1 & 24 (214) & 0.1 & 24 (143)  \\ 
(1,\,3.1,\,2.1) & 24 (164) & (0.1,\,3.1,\,2.1) & 24 (176) & (0.01,\,3.1,\,2.1) & 24 (185) & 0.05 & 24 (222) & 0.05 & 24 (144)  \\ 
(1,\,3.1,\,3.1) & 24 (164) & (0.1,\,3.1,\,3.1) & 24 (176) & (0.01,\,3.1,\,3.1) & 24 (185) & 0.01 & 24 (169) & 0.01 & 24 (145)  \\
			\bottomrule
	\end{tabular}}
\end{table}

We next compare ripALM with Mosek (version 11.0.27), SPGL1 (version 2.1), and dADMM (see Appendix \ref{sec:BPDN-dadmm}). For ripALM, we set $\rho=0.99$ based on results in Table \ref{Table:Comp-iPALMs-BPDN}. For dADMM, we initialize the penalty parameter as $\sigma=1$, and dynamically update it based on the primal-dual residuals, following the strategy in \cite[Section 5.1]{ylst2021fast}. We terminate dADMM when it returns a point $(\bm{s}^k, \bm{t}^k, \bm{y}^k)$ satisfying $\Delta_{\mathrm{res}}^{\rm bpdn}(\bm{s}^k, \bm{t}^k, \bm{y}^k) < 10^{-6}$ or reaches 10000 iterations. For SPGL1, we use its default termination conditions, set the tolerance as $10^{-6}$, (aligning it with our tolerance), and set the maximum number of iterations as 10000. For Mosek, we use its default termination conditions, but adopt two tolerance levels: a tolerance of $10^{-6}$ (denoted as Mosek (1e-6)) to match our tolerance and a tighter tolerance of $10^{-10}$ (denoted as Mosek (1e-10)) to generate a high-precision solution to use as a benchmark. In addition, to evaluate the solution quality, we compute the relative feasibility residual and  normalized objective gap, defined respectively as:
\begin{equation*}
\texttt{feas}(\bm{s}):= \frac{\max\left\{\|D\bm{s} - \bm{b}\|-\widehat{\kappa}, \,0\right\}}{1+\|\bm{b}\|}, \quad \text{ and } \quad \texttt{nobj}(\bm{s}):= \frac{\big|\|\bm{s}\|_1-\|\bm{s}^*\|_1\big|}{1 + \|\bm{s}^*\|_1},
\end{equation*}
where $\bm{s}^*$ denote the solution returned by Mosek (1e-10). 

Table \ref{tab:BPDN-UCI} reports the results for \texttt{housing7} and \texttt{mpg7}. These results further demonstrate the effectiveness of ripALM in comparison with the other representative algorithms.

\begin{table}[ht]
\caption{Comparisons among Mosek, SPGL1, dADMM, and ripALM for solving the BPDN problem \eqref{eq-bpdn} with $\widehat{\kappa} = \delta\|\bm{b}\|$ and $\delta\in\{0.1, 0.01\}$, using \texttt{housing7} and \texttt{mpg7} from the UCI data repository. In the table, ``\texttt{nobj}" denotes the normalized objective gap; ``\texttt{feas}" denotes the relative feasibility residual; ``\#" denotes the number of iterations (the total number of {\sc Ssn} iterations is given in the bracket); ``\texttt{time}" denotes the computational time.}\label{tab:BPDN-UCI}%
\renewcommand\arraystretch{1}
\centering \tabcolsep 5.4pt
\scalebox{0.86}{
\begin{tabular}{llllllllll}
\toprule
& & \texttt{nobj} & \texttt{feas} & \texttt{iters} & \texttt{time} & \texttt{nobj} & \texttt{feas} & \texttt{iters} & \texttt{time}  \\
\cmidrule(l){3-6} \cmidrule(l){7-10} \texttt{dataset} & \texttt{method} & \multicolumn{4}{c}{$\delta = 0.1$} & \multicolumn{4}{c}{$\delta = 0.01$} \\
\midrule
\multirow{5}{*}{\texttt{housing7}} & Mosek (1e-10) & 0 & 2.40e-12 & 27 & 33.21 & 0 & 2.37e-11 & 36 & 43.57 \\ 
& Mosek (1e-6) & 4.31e-05 & 1.44e-08 & 19 & 22.75 & 1.83e-05 & 4.60e-08 & 22 & 24.49 \\ 
& SPGL1 & 9.21e-03 & 0 & 1539 & 48.46 & 1.76e-01 & 1.42e-02 & 10000 & 237.72 \\ 
& dADMM & 2.00e-06 & 0 & 3151 & 70.09 & 4.98e-06 & 0 & 10000 & 221.44 \\ 
& ripALM & 2.08e-09 & 0 & 18 (48) & {\bf 1.65} & 6.58e-06 & 0 & 22 (74) & {\bf 2.11} \\ 
\midrule
\multirow{5}{*}{\texttt{mpg7}} & Mosek (1e-10) & 0 & 1.52e-13 & 16 & 0.62 & 0 & 1.49e-09 & 26 & 0.99 \\ 
& Mosek (1e-6) & 8.92e-06 & 1.82e-08 & 13 & 0.51 & 1.45e-07 & 2.98e-08 & 17 & 0.60 \\ 
& SPGL1 & 5.35e-03 & 0 & 443 & 0.21 & 8.82e-01 & 3.85e-02 & 10000 & 2.14 \\ 
& dADMM & 2.33e-06 & 0 & 2001 & 0.70 & 1.48e-04 & 0 & 10000 & 3.24 \\ 
& ripALM & 2.06e-07 & 1.58e-10 & 18 (23) & {\bf 0.02} & 1.03e-06 & 1.73e-08 & 30 (126) & {\bf 0.45} \\ 
\bottomrule 
\end{tabular}}
\end{table}

\section{Conclusions}\label{sec:conclusions}

In this paper, we developed a relative-type inexact proximal augmented Lagrangian method (ripALM) for solving a class of linearly constrained convex optimization problems. To the best of our knowledge, the proposed ripALM is the first relative-type inexact version of the vanilla pALM with provable convergence guarantees. By employing a suitable relative-type error criterion, it simplifies implementation and parameter tuning, compared to the classical absolute-type inexact pALM. We conducted a thorough convergence analysis and demonstrated the competitive efficiency of ripALM through numerical experiments on solving quadratically regularized optimal transport problems and basis pursuit denoising problems. Since our model covers a wide range of applications, future work may explore the potential of applying ripALM to other practical problems. 

\section*{Acknowledgements}

The research of Lei Yang is supported in part by the National Key Research and Development Program of China under grant 2023YFB3001704 and the National Natural Science Foundation of China under grant 12301411. The research of Kim-Chuan Toh is supported by the Ministry of Education, Singapore, under its Academic Research Fund Tier 2 grant (MOE-T2EP20224-0017).

\appendix

\section{Missing Proofs in Section \ref{sec:conver}}

\subsection{Proof of Theorem \ref{thm:convergence}}\label{sec:proofconver}

\begin{proof}
\textit{Statement (i)}. First, recall that $\ell(\bm{y}, \bm{x}) = -\bm{b}^\top \bm{y}+\langle \bm{x}, A^\top \bm{y}\rangle -f(\bm{x})$, which is convex in $\bm{y}$ and concave in $\bm{x}$. We see that  $\partial\ell(\bm{y},\bm{x})=\left\{A\bm{x}-\bm{b}\right\}\times\left\{v-A^{\top}\bm{y} \mid v\in \partial f(\bm{x})\right\}$. By using the relations in \eqref{ripALM-inexcond} and \eqref{ripALM_xwupdate}, along with some manipulations, we can obtain the following results for all $k \geq 0$:
\begin{numcases}{}
\big(\Delta^{k+1} - \tau_{k}\sigma_{k}^{-1}(\bm{y}^{k+1}-\bm{y}^{k}), \,\sigma_{k}^{-1}(\bm{x}^{k}-\bm{x}^{k+1})\big) \in \partial \ell(\bm{y}^{k+1}, \bm{x}^{k+1}), \label{optcond1} \\[5pt]
2\big|\langle\bm{w}^k-\bm{y}^{k+1}, \,\sigma_{k}\Delta^{k+1}\rangle\big| + \big\|\sigma_{k}\Delta^{k+1}\big\|^2 \leq \rho\big(\big\|\bm{x}^{k+1}-\bm{x}^{k}\big\|^2 + \tau_{k}\big\|\bm{y}^{k+1}-\bm{y}^k\big\|^2\big),  \label{optcond2} \\[5pt]
\bm{w}^{k+1} = \bm{w}^k - \sigma_{k}\Delta^{k+1}.   \label{optcond3}
\end{numcases}

Let $(\bm{y}^*,\bm{x}^*)\in\mathbb{R}^M\times\mathbb{R}^N$ be an arbitrary saddle point of $\ell$ and hence $(\bm{0},\bm{0})\in\partial\ell(\bm{y}^*,\bm{x}^*)$. For all $k\geq0$,
\begin{equation*}
\begin{aligned}
\|\bm{x}^k-\bm{x}^*\|^2
&= \|\bm{x}^k-\bm{x}^{k+1}+\bm{x}^{k+1}-\bm{x}^*\|^2 \\
&= \|\bm{x}^k-\bm{x}^{k+1}\|^2 + 2\langle\bm{x}^k-\bm{x}^{k+1},\,\bm{x}^{k+1}-\bm{x}^*\rangle + \|\bm{x}^{k+1}-\bm{x}^*\|^2.
\end{aligned}
\end{equation*}
By letting $\bm{\xi}^{k+1}:=\sigma_{k}^{-1}(\bm{x}^{k}-\bm{x}^{k+1})$, the above equation can be reformulated as
\begin{equation}\label{eq:xxineq}
\|\bm{x}^{k+1}-\bm{x}^*\|^2 = \|\bm{x}^k-\bm{x}^*\|^2 - 2\sigma_{k}\langle\bm{x}^{k+1}-\bm{x}^*,\,\bm{\xi}^{k+1}\rangle - \|\bm{x}^{k+1}-\bm{x}^{k}\|^2.
\end{equation}
Then, using the relation $\bm{w}^{k+1} = \bm{w}^k - \sigma_{k}\Delta^{k+1}$ (by \eqref{optcond3}), we see that
\begin{equation}\label{wwineq}
\begin{aligned}
 \|\bm{w}^{k+1}-\bm{y}^*\|^2
= &\; \|\bm{w}^k-\sigma_{k}\Delta^{k+1}-\bm{y}^*\|^2  \\
= &\; \|\bm{w}^k-\bm{y}^*\|^2 - 2\langle\bm{w}^k-\bm{y}^*,\,\sigma_{k}\Delta^{k+1}\rangle + \|\sigma_{k}\Delta^{k+1}\|^2 \\
= &\; \|\bm{w}^k-\bm{y}^*\|^2 - 2\langle\bm{w}^k-\bm{y}^{k+1},\,\sigma_{k}\Delta^{k+1}\rangle + \|\sigma_{k}\Delta^{k+1}\|^2 \\
&\; - 2\sigma_{k}\langle\bm{y}^{k+1}-\bm{y}^*,\,\bm{\theta}^{k+1}\rangle
- 2\tau_{k}\langle\bm{y}^{k+1}-\bm{y}^*,\,\bm{y}^{k+1}-\bm{y}^k\rangle.
\end{aligned}
\end{equation}
where $\bm{\theta}^{k+1}:=\Delta^{k+1}-\tau_{k}\sigma_{k}^{-1}(\bm{y}^{k+1}-\bm{y}^{k})$.
Similarly,
\begin{equation}\label{eq:yyineq}
\begin{aligned}
\tau_{k}\|\bm{y}^{k+1}-\bm{y}^*\|^2
= \tau_{k}\|\bm{y}^{k}-\bm{y}^*\|^2 - \tau_{k}\|\bm{y}^{k+1}-\bm{y}^k\|^2 + 2\tau_{k}\langle\bm{y}^{k+1}-\bm{y}^*,\,\bm{y}^{k+1}-\bm{y}^k\rangle.
\end{aligned}
\end{equation}
By summing \eqref{eq:xxineq}, \eqref{wwineq} and \eqref{eq:yyineq}, we have that
\begin{equation}\label{eq:xwyineq}
\begin{aligned}
&\; \|\bm{x}^{k+1}-\bm{x}^*\|^2 + \|\bm{w}^{k+1}-\bm{y}^*\|^2 + \tau_{k}\|\bm{y}^{k+1}-\bm{y}^*\|^2  \\
= &\; \|\bm{x}^{k}-\bm{x}^*\|^2 + \|\bm{w}^{k}-\bm{y}^*\|^2 + \tau_{k}\|\bm{y}^k-\bm{y}^*\|^2  \\
&\; - 2\sigma_{k}\left(\langle\bm{y}^{k+1}-\bm{y}^*,\,\bm{\theta}^{k+1}\rangle
+ \langle\bm{x}^{k+1}-\bm{x}^*,\,\bm{\xi}^{k+1}\rangle\right)
- 2\langle\bm{w}^k-\bm{y}^{k+1},\,\sigma_{k}\Delta^{k+1}\rangle\\
&\; + \|\sigma_{k}\Delta^{k+1}\|^2
- \|\bm{x}^{k+1}-\bm{x}^{k}\|^2 - \tau_{k}\|\bm{y}^{k+1}-\bm{y}^k\|^2.
\end{aligned}
\end{equation}
Note from \eqref{optcond1} that
\begin{equation}\label{optcond1re}
\big(\bm{\theta}^{k+1}, \,\bm{\xi}^{k+1}\big) \in \partial \ell(\bm{y}^{k+1}, \bm{x}^{k+1}),
\end{equation}
which, together with $\bm{0}\in\partial\ell(\bm{y}^*,\bm{x}^*)$ and the monotonicity of $\partial\ell$, yields
\begin{equation*}
\langle\bm{y}^{k+1}-\bm{y}^*,\,\bm{\theta}^{k+1}\rangle
+ \langle\bm{x}^{k+1}-\bm{x}^*,\,\bm{\xi}^{k+1}\rangle \geq 0.
\end{equation*}
Moreover, by using \eqref{optcond2}, we see that
\begin{equation*}\label{errbdineq}
\begin{aligned}
- 2\langle\bm{w}^k-\bm{y}^{k+1},\,\sigma_{k}\Delta^{k+1}\rangle + \|\sigma_{k}\Delta^{k+1}\|^2
&\leq 2\big|\langle\bm{w}^k-\bm{y}^{k+1},\,\sigma_{k}\Delta^{k+1}\rangle\big| + \|\sigma_{k}\Delta^{k+1}\|^2  \\
&\leq \rho\left(\|\bm{x}^{k+1}-\bm{x}^{k}\|^2
+ \tau_{k}\|\bm{y}^{k+1}-\bm{y}^k\|^2\right).
\end{aligned}
\end{equation*}
Substituting the above two inequalities into \eqref{eq:xwyineq}, we obtain a key inequality for the subsequent convergence analysis:
\begin{equation}\label{eq:recursion_of_xyw}
\begin{aligned}
&\; \|\bm{x}^{k+1}-\bm{x}^*\|^2 + \|\bm{w}^{k+1}-\bm{y}^*\|^2 + \tau_{k}\|\bm{y}^{k+1}-\bm{y}^*\|^2\\
\leq &\; \|\bm{x}^{k}-\bm{x}^*\|^2 + \|\bm{w}^{k}-\bm{y}^*\|^2 + \tau_{k}\|\bm{y}^k-\bm{y}^*\|^2 \\
&\; - (1-\rho)\left(\|\bm{x}^{k+1}-\bm{x}^{k}\|^2 + \tau_{k}\|\bm{y}^{k+1}-\bm{y}^k\|^2\right).
\end{aligned}
\end{equation}
The inequality \eqref{eq:recursion_of_xyw}, together with $\rho\in[0,1)$ and $\tau_{k+1}\leq(1+\nu_{k})\tau_{k}$ with $\nu_k\geq0$ and $\sum\nu_i<\infty$ for all $k \geq 0$, implies that
\begin{equation}\label{eq:relaxed_monotone}
\begin{aligned}
&\; \|\bm{x}^{k+1}-\bm{x}^*\|^2 + \|\bm{w}^{k+1}-\bm{y}^*\|^2 + \tau_{k+1}\|\bm{y}^{k+1}-\bm{y}^*\|^2\\
\leq &\; (1+\nu_{k})\left(\|\bm{x}^{k}-\bm{x}^*\|^2 + \|\bm{w}^{k}-\bm{y}^*\|^2 + \tau_{k}\|\bm{y}^k-\bm{y}^*\|^2\right).
\end{aligned}
\end{equation}
Since $\{\nu_{k}\}$ is a non-negative summable sequence, it then follows from \cite[Lemma 2 in Section 2.2 on Page 44]{polyak1987introduction} that the sequence $\left\{\|\bm{x}^{k}-\bm{x}^{*}\|^2+\|\bm{w}^{k}-\bm{y}^{*}\|^{2} + \tau_{k}\|\bm{y}^k-\bm{y}^{*}\|^2\right\}$ is convergent (but do not necessarily converge to $0$). This together with $\tau_k\geq\tau_{\min}>0$ implies that all sequences $\{\bm{x}^k\}$, $\{\bm{w}^k\}$, $\{\bm{y}^{k}\}$ are bounded.

\vspace{2mm}
\textit{Statement (ii)}. Using \eqref{eq:recursion_of_xyw} again with $\tau_{k+1}\leq(1+\nu_{k})\tau_{k}$ with $\nu_k\geq0$ and $\sum\nu_i<\infty$ for all $k \geq 0$, we have that
\begin{equation}\label{addineq1}
\begin{aligned}
0 \leq &\; (1-\rho){(1+\nu_k)}\left(\|\bm{x}^{k+1}-\bm{x}^{k}\|^2 + \tau_{k}\|\bm{y}^{k+1}-\bm{y}^k\|^2\right) \\
\leq &\; (1+\nu_{k})\left(\|\bm{x}^{k}-\bm{x}^*\|^2 + \|\bm{w}^{k}-\bm{y}^*\|^2 + \tau_{k}\|\bm{y}^k-\bm{y}^*\|^2\right) \\
&\quad - \left(\|\bm{x}^{k+1}-\bm{x}^*\|^2 + \|\bm{w}^{k+1}-\bm{y}^*\|^2 + \tau_{k+1}\|\bm{y}^{k+1}-\bm{y}^*\|^2\right).
\end{aligned}
\end{equation}
Since $\left\{\|\bm{x}^{k}-\bm{x}^{*}\|^2+\|\bm{w}^{k}-\bm{y}^{*}\|^{2}+ \tau_{k}\|\bm{y}^k-\bm{y}^{*}\|^2\right\}$ is convergent, $\nu_{k}\to0$ (due to $\nu_k\geq0$ and $\sum\nu_i<\infty$) and $\rho\in[0,1)$, it then follows from \eqref{addineq1} that
\begin{equation}\label{eq:conv_succhg_xy}
\lim\limits_{k\to\infty}\|\bm{x}^{k+1}-\bm{x}^{k}\| = 0
\quad \mathrm{and} \quad \lim\limits_{k\to\infty}\|\sqrt{\tau_{k}}(\bm{y}^{k+1}-\bm{y}^k)\| = 0.
\end{equation}
Note that both sequences $\{\sigma_k\}$ and $\{\tau_k\}$ are bounded away from 0. Thus, we further have that $\lim\limits_{k\to\infty}\bm{\xi}^{k+1} (:= \sigma_{k}^{-1}(\bm{x}^{k}-\bm{x}^{k+1}))=\bm{0}$ and $\lim\limits_{k\to\infty}\|\bm{y}^{k+1}-\bm{y}^k\|=\bm{0}$. Moreover, using \eqref{eq:conv_succhg_xy} together with \eqref{optcond2} implies that
\begin{equation*}
\lim\limits_{k\to\infty}~|\langle\bm{w}^k-\bm{y}^{k+1},\,\sigma_k\Delta^{k+1}\rangle|
= 0
\quad \mathrm{and} \quad
\lim\limits_{k\to\infty}~\|\sigma_k\Delta^{k+1}\|^2 = 0.
\end{equation*}
Since $\{\sigma_{k}\}$ is bounded away from $0$, we then obtain that $\lim\limits_{k\to\infty}\langle\bm{w}^k-\bm{y}^{k+1},\,\Delta^{k+1}\rangle=0$ and $\lim\limits_{k\to\infty}\Delta^{k+1}=\bm{0}$. Finally, recall again that $\tau_{k+1}\leq(1+\nu_{k})\tau_{k}$ with $\nu_k\geq0$ and $\sum\nu_i<\infty$ for all $k\geq0$. Thus, $\tau_k$ must be bounded from above and hence $\tau_{k}\sigma_{k}^{-1}$ is also bounded from above. Consequently, we can obtain that $\lim\limits_{k\to\infty}\tau_{k}\sigma_{k}^{-1}(\bm{y}^{k+1}-\bm{y}^{k})=\bm{0}$ and hence $\lim\limits_{k\to\infty}\bm{\theta}^{k+1}=\bm{0}$.

\vspace{2mm}
\textit{Statement (iii)}. We first study the limit of $\{F\big(\bm{\theta}^{k+1}, \,\bm{x}^{k+1}\big)\}$. Using relations \eqref{optcond1re} and \eqref{subdiffrelation}, we have that $(\bm{y}^{k+1}, \bm{\xi}^{k+1})\in\partial F\big(\bm{\theta}^{k+1}, \,\bm{x}^{k+1}\big)$. Then, by the concavity of $F$, it holds that, for all $k\geq 0$,
\begin{equation*}
F(\bm{0},\,\bm{x}^*) \leq F\big(\bm{\theta}^{k+1}, \,\bm{x}^{k+1}\big)
+ \langle\bm{y}^{k+1}, \,\bm{\theta}^{k+1}\rangle
+ \langle\bm{\xi}^{k+1}, \,\bm{x}^{k+1}-\bm{x}^{*}\rangle.
\end{equation*}
Since $\lim\limits_{k\to\infty}\bm{\theta}^{k+1}=\bm{0}$, $\lim\limits_{k\to\infty}\bm{\xi}^{k+1} = \bm{0}$, and the sequences $\{\bm{x}^k\}$ and $\{\bm{y}^{k}\}$ are bounded, we can obtain from the above inequality that
\begin{equation}\label{liminfres}
\liminf\limits_{k\to\infty}\,F\big(\bm{\theta}^{k+1}, \,\bm{x}^{k+1}\big)
\geq F(\bm{0},\,\bm{x}^*).
\end{equation}
On the other hand, since $\{\bm{x}^k\}$ is bounded, it has at least one accumulation point. Suppose that $\bm{x}^{\infty}$ is an accumulation point and $\{\bm{x}^{k_i}\}$ is a convergent subsequence such that $\lim\limits_{i\to\infty}\bm{x}^{k_i} = \bm{x}^{\infty}$. Since $\lim\limits_{k\to\infty}\|\bm{x}^{k+1} - \bm{x}^{k}\| = 0$, we also have that $\lim\limits_{i\to\infty}\bm{x}^{k_{i}+1} = \bm{x}^{\infty}$. Thus, by passing to a further subsequence if necessary, we may assume without loss of generality that the subsequence  $\big\{F\big(\bm{\theta}^{k_{i}+1}, \,\bm{x}^{k_{i}+1}\big)\big\}$ satisfies
\begin{equation*}
\lim\limits_{i\to\infty}F\big(\bm{\theta}^{k_{i}+1},\bm{x}^{k_{i}+1}\big)
=\limsup\limits_{k\to\infty}\,F\big(\bm{\theta}^{k+1}, \,\bm{x}^{k+1}\big).
\end{equation*}
Note that $F$ is closed upper semicontinuous concave (see, for example, \cite[Theorem 7]{r1974conjugate}), and thus $\mathrm{dom}\,F$ is closed. This together with $(\bm{0},\,\bm{x}^{\infty})=\lim\limits_{i\to\infty}\big(\bm{\theta}^{k_{i}+1}, \,\bm{x}^{k_{i}+1}\big)$ induces that $(\bm{0},\,\bm{x}^{\infty})\in\mathrm{dom}\,F$. Then, we see that
\begin{equation*}
\begin{aligned}
&\;\;\;\;\;F(\bm{0},\,\bm{x}^*) \\
&\geq F(\bm{0},\,\bm{x}^{\infty}) && \quad \mbox{(since $\bm{x}^*$ is optimal for problem \eqref{eq:para-proorgdual})} \\
&= F\bigg(\lim\limits_{i\to\infty}\bm{\theta}^{k_{i}+1},\,\lim\limits_{i\to\infty}\bm{x}^{k_{i}+1}\bigg) && \quad \mbox{(since $\lim\limits_{k\to\infty}\bm{\theta}^{k+1}=\bm{0}$ and $\lim\limits_{i\to\infty}\bm{x}^{k_{i}+1} = \bm{x}^{\infty}$)} \\
&\geq \limsup\limits_{i\to\infty}\,F\big(\bm{\theta}^{k_{i}+1}, \,\bm{x}^{k_{i}+1}\big)
&& \quad \mbox{(since $F$ is upper semicontinuous)} \\
&= \limsup\limits_{k\to\infty}\,F\big(\bm{\theta}^{k+1}, \,\bm{x}^{k+1}\big).  && \quad \mbox{(by the choice of subsequence $\{\bm{x}^{k_{i}+1}\}$)}
\end{aligned}
\end{equation*}
This together with \eqref{liminfres} implies that
\begin{equation}\label{eq:limofF}
\lim\limits_{k\to\infty}\,F\big(\bm{\theta}^{k+1}, \,\bm{x}^{k+1}\big)
= F(\bm{0},\,\bm{x}^*).
\end{equation}

We next study the limit of $\{G(\bm{y}^{k+1}, \bm{\xi}^{k+1})\}$. Since $-F$ and $G$ are convex conjugate and $\big(\bm{\theta}^{k+1},\,\bm{x}^{k+1}\big)\in\partial G(\bm{y}^{k+1},\,\bm{\xi}^{k+1})$, we can get  the following equality by using the Fenchel equality (see, for example, \cite[Theorem 23.5]{r1970convex}):
\begin{equation*}
G(\bm{y}^{k+1}, \,\bm{\xi}^{k+1})
= F\big(\bm{\theta}^{k+1}, \,\bm{x}^{k+1}\big)
+ \langle\bm{\theta}^{k+1}, \,\bm{y}^{k+1}\rangle
+ \langle\bm{x}^{k+1}, \,\bm{\xi}^{k+1}\rangle.
\end{equation*}
Since $\lim\limits_{k\to\infty}\bm{\theta}^{k+1}=\bm{0}$, $\lim\limits_{k\to\infty}\bm{\xi}^{k+1} = \bm{0}$, and the sequences $\{\bm{x}^k\}$ and $\{\bm{y}^{k}\}$ are bounded, we obtain that
\begin{equation*}
\lim\limits_{k\to\infty}\,G(\bm{y}^{k+1}, \bm{\xi}^{k+1})
= F(\bm{0},\,\bm{x}^*) = G(\bm{y}^*,\,\bm{0}).
\end{equation*}
This proves statement (iii).

\vspace{2mm}
\textit{Statement (iv)}. We first prove that any accumulation point of $\{\bm{y}^k\}$ is an optimal solution of problem \eqref{eq:para-proorg}. Since $\{\bm{y}^k\}$ is bounded by statement (i), the sequence $\{\bm{y}^k\}$ has at least one accumulation point. Suppose that $\bm{y}^{\infty}$ is an accumulation point and $\{\bm{y}^{k_j}\}$ is a convergent subsequence such that $\lim\limits_{j\to\infty}\bm{y}^{k_j} = \bm{y}^{\infty}$. Since $\lim\limits_{k\to\infty}\|\bm{y}^{k+1}-\bm{y}^{k}\| = 0$, we also have that $\lim\limits_{j\to\infty}\bm{y}^{k_j+1} = \bm{y}^{\infty}$. Then, using the fact that $G$ is lower semicontinuous and convex, and $\lim\limits_{k\to\infty}\bm{\xi}^{k+1}=\bm{0}$, we obtain that
\begin{equation*}
G(\bm{y}^{\infty},\bm{0})
= G(\lim_{j\to\infty}\bm{y}^{k_j+1},\,\lim_{j\to\infty}\bm{\xi}^{k_j+1})
\leq \liminf_{j\to\infty}~G(\bm{y}^{k_j+1},\,\bm{\xi}^{k_j+1})
= G(\bm{y}^*,\bm{0}).
\end{equation*}
This implies that $\bm{y}^{\infty}$ is an optimal solution of problem \eqref{eq:para-proorg}. Similarly, using the upper semicontinuity of $F$ and analogous manipulations, we can prove that any accumulation point of $\{\bm{x}^k\}$ is an optimal solution of problem \eqref{eq:para-proorgdual}. This proves statement (iv).

\vspace{2mm}
\textit{Statement (v)}. We next prove that the whole sequence $\{\bm{x}^{k}\}$ is convergent. Let
\begin{equation*}
D_{\tau_k}\left((\bm{w}^k,\bm{y}^k), \,\mathcal{Y}^*\right)
:= \inf\limits_{\bm{y}^*\in\mathcal{Y}^*}\left\{\|\bm{w}^{k}-\bm{y}^*\|^2 + \tau_{k}\|\bm{y}^{k}-\bm{y}^*\|^2\right\},
\end{equation*}
and define that
\begin{equation*}
\phi := \liminf_{k\to\infty}\,D_{\tau_k}\left((\bm{w}^k,\bm{y}^k), \,\mathcal{Y}^*\right),
\end{equation*}
where $\mathcal{Y}^*$ is the solution set of problem \eqref{eq:para-proorg} (i.e., problem \eqref{eq:maindual}). Since $\{\bm{w}^k\}$, $\{\bm{y}^{k}\}$ and $\{\tau_k\}$ are bounded, we see that $0 \leq \phi < \infty$ and there exists a subsequence $\{(\bm{w}^{k_j},\bm{y}^{k_j},\tau_{k_j})\}$ such that
\begin{equation*}
\lim_{j\to\infty}\,D_{\tau_{k_j}}\left((\bm{w}^{k_j},\bm{y}^{k_j}), \,\mathcal{Y}^*\right) = \phi.
\end{equation*}
Then, by passing to a further subsequence if necessary, we may also assume without loss of generality that the subsequence $\{\bm{x}^{k_j}\}\subseteq\{\bm{x}^k\}$ converges to some accumulation point $\bm{x}^{\infty}$, which, in view of statement (iv), belongs to $\mathcal{X}^*$ (the optimal solution set of \eqref{eq:para-proorgdual}). Thus, for such $\bm{x}^{\infty}$ and any $\bm{y}^*\in\mathcal{Y}^*$, using \eqref{eq:relaxed_monotone} with some manipulations, we obtain, for all $k > k_j$,
\begin{equation*}
\begin{aligned}
&\,\|\bm{x}^{k}-\bm{x}^{\infty}\|^2 + \|\bm{w}^{k}-\bm{y}^*\|^2 + \tau_{k}\|\bm{y}^{k}-\bm{y}^*\|^2 \\
\leq & \; \left(\prod_{i=k_j}^{k-1}\big(1+\nu_i\big)\right)
\Big(\|\bm{x}^{k_j}-\bm{x}^{\infty}\|^2 + \|\bm{w}^{k_j}-\bm{y}^*\|^2
+ \tau_{k_j}\|\bm{y}^{k_{j}}-\bm{y}^*\|^2\Big).
\end{aligned}
\end{equation*}
Since $0 \leq \phi \leq \liminf\limits_{k\to\infty}\left\{\|\bm{w}^{k}-\bm{y}^*\|^2 + \tau_{k}\|\bm{y}^{k}-\bm{y}^*\|^2\right\}$ for any $\bm{y}^*\in\mathcal{Y}^*$, passing to the limit superior when $k \to \infty$ on the both sides of this inequality, we obtain that, for any $\bm{y}^*\in\mathcal{Y}^*$,
\begin{equation*}
\begin{aligned}
&\; \phi + \limsup\limits_{k\to\infty}\left\{\|\bm{x}^{k}-\bm{x}^{\infty}\|^2\right\} \\
\leq &\; \limsup\limits_{k\to\infty}\left\{\|\bm{x}^{k}-\bm{x}^{\infty}\|^2 + \|\bm{w}^{k}-\bm{y}^*\|^2 + \tau_{k}\|\bm{y}^{k}-\bm{y}^*\|^2\right\} \\
\leq &\; \left(\prod_{i=k_j}^{\infty}(1+\nu_i)\right)
\left(\|\bm{x}^{k_j}-\bm{x}^{\infty}\|^2 + \|\bm{w}^{k_j}-\bm{y}^*\|^2
+ \tau_{k_j}\|\bm{y}^{k_j}-\bm{y}^*\|^2\right), \quad \forall j \geq 0.
\end{aligned}
\end{equation*}
Taking the infimum in $\bm{y}^*\in\mathcal{Y}^*$ on the right-hand side of the last inequality, we have that
\begin{equation*}
\begin{aligned}
&\limsup\limits_{k\to\infty}\left\{\|\bm{x}^{k}-\bm{x}^{\infty}\|^2\right\} \\
\leq &\; \left(\prod_{i=k_j}^{\infty}(1+\nu_i)\right)
\|\bm{x}^{k_j}-\bm{x}^{\infty}\|^2
+ \left(\prod_{i=k_j}^{\infty}(1+\nu_i)\right)
D_{\tau_{k_j}}\left((\bm{w}^{k_j},\bm{y}^{k_j}), \,\mathcal{Y}\right)
- \phi, \quad \forall j \geq 0.
\end{aligned}
\end{equation*}
Since $\ln\left(\prod_{i=k_{j}}^{\infty}(1+\nu_{i})\right) = \sum_{i=k_{j}}^{\infty}\ln(1+\nu_{i}) \leq \sum_{i=k_{j}}^{\infty}\nu_{i}$ and $\lim\limits_{j\to\infty}\sum_{i=k_{j}}^{\infty}\nu_{i} = 0$ (due to the summability of $\{\nu_k\}$), we see that $\lim\limits_{j\to\infty}\prod_{i=k_{j}}^{\infty}(1+\nu_{i}) = 1$. Using this fact, we can observe that the right-hand side of the above inequality converges to 0 as $j\to\infty$. Then, we conclude that $\lim\limits_{k\to\infty} \bm{x}^{k} = \bm{x}^{\infty}$, which completes the proof.
\end{proof}

\subsection{Proof of Theorem \ref{thm:Q-linear-rate}}\label{sec:proof_linear_rate}

\begin{proof}
\textit{Statement (i)}. For the sake of clarity, we will present our proof in three steps.

\vspace{2mm}
\textbf{Step I.} Let $(\bm{y}^*,\bm{x}^*)\in\mathbb{R}^M\times\mathbb{R}^N$ be an arbitrary saddle point of $\ell$. Similar to the proof of Theorem \ref{thm:convergence}(i), we combine \eqref{eq:xxineq} with \eqref{eq:yyineq} to obtain that
\begin{equation*}
\begin{aligned}
&\; \|\bm{x}^{k+1}-\bm{x}^*\|^2 + \tau_{k}\|\bm{y}^{k+1}-\bm{y}^*\|^2  \\
= &\; \|\bm{x}^{k}-\bm{x}^*\|^2 + \tau_{k}\|\bm{y}^k-\bm{y}^*\|^2
\; - 2\sigma_{k}\left(\langle\bm{y}^{k+1}-\bm{y}^*,\,\bm{\theta}^{k+1}\rangle
+ \langle\bm{x}^{k+1}-\bm{x}^{*},\,\bm{\xi}^{k+1}\rangle\right) \\
&\; + 2\sigma_{k}\langle\bm{y}^{k+1}-\bm{y}^{*}, \Delta^{k+1}\rangle
- \|\bm{x}^{k+1}-\bm{x}^{k}\|^2 - \tau_{k}\|\bm{y}^{k+1}-\bm{y}^k\|^2.
\end{aligned}
\end{equation*}
Since $(\bm{0},\bm{0})\in\partial\ell(\bm{y}^*,\bm{x}^*)$ and $\left(\bm{\theta}^{k+1}, \,\bm{\xi}^{k+1}\right) \in \partial \ell(\bm{y}^{k+1}, \bm{x}^{k+1})$, it then follows from the monotonicity of $\partial\ell$ that
\begin{equation*}
\langle\bm{y}^{k+1}-\bm{y}^*,\,\bm{\theta}^{k+1}\rangle
+ \langle\bm{x}^{k+1}-\bm{x}^{*},\,\bm{\xi}^{k+1}\rangle \geq 0.
\end{equation*}
Thus, we conclude that
\begin{equation}\label{eq:succ-xychg}
\begin{aligned}
&\;\left\|\begin{matrix}
\sqrt{\tau_{k}}(\bm{y}^{k}-\bm{y}^{*}) \\
\bm{x}^{k} - \bm{x}^{*}
\end{matrix}\right\|^{2}
- \left\|\begin{matrix}
\sqrt{\tau_{k}}(\bm{y}^{k+1}-\bm{y}^{*}) \\
\bm{x}^{k+1} - \bm{x}^{*}
\end{matrix} \right\|^{2} \\
\geq &\; \left\|\begin{matrix}
\sqrt{\tau_{k}}(\bm{y}^{k+1}-\bm{y}^{k}) \\
\bm{x}^{k+1} - \bm{x}^{k}
\end{matrix} \right\|^2
- 2\sigma_{k}\|\bm{y}^{k+1}-\bm{y}^{*}\|\|\Delta^{k+1}\|.
\end{aligned}
\end{equation}
Define the sequences $\{\overline{\bm{y}}^{k}\}\subseteq\mathbb{R}^M$ and $\{\overline{\bm{x}}^{k}\}\subseteq\mathbb{R}^N$ as follows:
\begin{equation*}\label{eq:def-of-proj}
\overline{\bm{y}}^{k} := \Pi_{\mathcal{Y}^*}(\bm{y}^{k})
\quad \text{and} \quad
\overline{\bm{x}}^{k} := \Pi_{\mathcal{X}^*}(\bm{x}^{k}),\quad \forall\; k\geq 0,
\end{equation*}
where $\mathcal{Y}^*$ is the solution set of problem \eqref{eq:para-proorg} (i.e., problem \eqref{eq:maindual}), $\mathcal{X}^*$ is the solution set of problem \eqref{eq:para-proorgdual} (i.e., problem \eqref{eq:mainprob}), and $\Pi_{\mathcal{Y}^*}(\bm{y}^{k})$ (resp. $\Pi_{\mathcal{X}^*}(\bm{x}^{k})$) denotes the projection of $\bm{y}^{k}$ (resp. $\bm{x}^k$) onto set $\mathcal{Y}^*$ (resp. $\mathcal{X}^*$). Since \eqref{eq:succ-xychg} holds for any $\bm{y}^{*}\in\mathcal{Y}^*$ and $\bm{x}^{*}\in\mathcal{X}^*$, we can replace $\bm{y}^{*}$ and $\bm{x}^{*}$ with $\overline{\bm{y}}^{k}$ and $\overline{\bm{x}}^{k}$, respectively, to obtain that
\begin{align*}
&\; \left\|\begin{matrix}
\sqrt{\tau_{k}}(\bm{y}^{k}-\overline{\bm{y}}^{k}) \\
\bm{x}^{k} - \overline{\bm{x}}^{k}
\end{matrix} \right\|^{2}
- \left\|\begin{matrix}
\sqrt{\tau_{k}}(\bm{y}^{k+1}-\overline{\bm{y}}^{k}) \\
\bm{x}^{k+1} - \overline{\bm{x}}^{k}
\end{matrix} \right\|^{2} \\
\geq &\; \left\|\begin{matrix}
\sqrt{\tau_{k}}(\bm{y}^{k+1}-\bm{y}^{k}) \\
\bm{x}^{k+1} - \bm{x}^{k}
\end{matrix} \right\|^2 - 2\sigma_{k}\|\bm{y}^{k+1}-\overline{\bm{y}}^{k}\|\|\Delta^{k+1}\|.
\end{align*}
Moreover, from the definitions of $\overline{\bm{y}}^{k}$ and $\overline{\bm{x}}^{k}$, we have that
$\left\|\bm{y}^{k+1} - \overline{\bm{y}}^{k+1}\right\| \leq \left\|\bm{y}^{k+1} - \overline{\bm{y}}^{k}\right\|$ and $\left\|\bm{x}^{k+1} - \overline{\bm{x}}^{k+1}\right\| \leq \left\|\bm{x}^{k+1} - \overline{\bm{x}}^{k}\right\|$. These, together with the above inequality, yield that
\begin{equation}\label{eq:succ-xychgbar}
\begin{aligned}
&\; \left\|\begin{matrix}
\sqrt{\tau_{k}}(\bm{y}^{k}-\overline{\bm{y}}^{k}) \\
\bm{x}^{k} - \overline{\bm{x}}^{k}
\end{matrix} \right\|^{2}
- \left\|\begin{matrix}
\sqrt{\tau_{k}}(\bm{y}^{k+1}-\overline{\bm{y}}^{k+1}) \\
\bm{x}^{k+1} - \overline{\bm{x}}^{k+1}
\end{matrix} \right\|^{2} \\
\geq &\; \left\|\begin{matrix}
\sqrt{\tau_{k}}(\bm{y}^{k+1}-\bm{y}^{k}) \\
\bm{x}^{k+1} - \bm{x}^{k}
\end{matrix} \right\|^2
- 2\sigma_{k}\|\bm{y}^{k+1}-\overline{\bm{y}}^{k}\|\|\Delta^{k+1}\|.
\end{aligned}
\end{equation}

\vspace{2mm}
\textbf{Step II.} We next derive an upper bound for $\|\bm{y}^{k+1}-\overline{\bm{y}}^{k}\|\|\Delta^{k+1}\|$. On the one hand, we have from \eqref{optcond2} that
\begin{equation*}
\sigma_{k}^{2}\|\Delta^{k+1}\|^2
\leq \rho\left(\|\bm{x}^{k+1}-\bm{x}^{k}\|^2
+ \tau_{k}\|\bm{y}^{k+1}-\bm{y}^k\|^2\right),
\end{equation*}
which implies that
\begin{equation}\label{eq:delta-upbd-xy}
\|\Delta^{k+1}\|
\leq \frac{\sqrt{\rho}}{\sigma_{k}}
\left\|\begin{matrix}
\sqrt{\tau_{k}}(\bm{y}^{k+1}-\bm{y}^{k}) \\
\bm{x}^{k+1} - \bm{x}^{k}
\end{matrix} \right\|.
\end{equation}
On the other hand, we see that
\begin{equation}\label{eq:upbd-yybark}
\begin{aligned}
\|\bm{y}^{k+1}-\overline{\bm{y}}^{k}\|
&\leq \|\bm{y}^{k+1}-\overline{\bm{y}}^{k+1}\|
+ \|\overline{\bm{y}}^{k+1}-\overline{\bm{y}}^{k}\|
\leq \|\bm{y}^{k+1}-\overline{\bm{y}}^{k+1}\|
+ \|\bm{y}^{k+1}-{\bm{y}}^{k}\| \\
&\leq \|\bm{y}^{k+1}-\overline{\bm{y}}^{k+1}\|
+ \frac{1}{\sqrt{\tau_{k}}}\left\|\begin{matrix}
\sqrt{\tau_{k}}(\bm{y}^{k+1}-{\bm{y}}^{k}) \\
\bm{x}^{k+1} - {\bm{x}}^{k}
\end{matrix}\right\|,
\end{aligned}
\end{equation}
where the second inequality follows from the non-expansiveness of the projection operator $\Pi_{\mathcal{Y}^*}(\cdot)$. Moreover, since $\{\bm{y}^{k}\}$ and $\{\bm{x}^{k}\}$ are bounded (by Theorem \ref{thm:convergence}(i)), there must exist a positive scalar $r$ such that
\begin{equation*}
\mathrm{dist}\big((\bm{y}^{k},\bm{x}^{k}),\,(\partial\ell)^{-1}(\bm{0},\bm{0})\big)\leq r, \quad \forall \,k \geq 0.
\end{equation*}
Thus, we apply Assumption \ref{asp:error-bound-Li} with this $r$ and know that, there exists a $\kappa>0$ such that
\begin{equation*}
\operatorname{dist}\left((\bm{y}^{k+1},\bm{x}^{k+1}), \,\mathcal{Y}^*\times\mathcal{X}^*\right)
\leq \kappa\operatorname{dist}\left((\bm{0},\bm{0}), \,\partial\ell(\bm{y}^{k+1},\bm{x}^{k+1})\right)
\leq \kappa\left\|(\bm{\theta}^{k+1}, \,\bm{\xi}^{k+1})\right\|,
\end{equation*}
where the last inequality is due to \eqref{optcond1re} (i.e., $\left(\bm{\theta}^{k+1}, \,\bm{\xi}^{k+1}\right) \in \partial \ell(\bm{y}^{k+1}, \bm{x}^{k+1})$) with $\bm{\theta}^{k+1} := \Delta^{k+1} - \tau_{k}\sigma_{k}^{-1}(\bm{y}^{k+1}-\bm{y}^{k})$ and $\bm{\xi}^{k+1} := \sigma_{k}^{-1}(\bm{x}^{k}-\bm{x}^{k+1})$. This inequality further implies that
\begin{equation}\label{eq:upbd-ydist}
\begin{aligned}
 & \|\bm{y}^{k+1} - \overline{\bm{y}}^{k+1}\|
\leq \; \sqrt{\|\bm{y}^{k+1} - \overline{\bm{y}}^{k+1}\|^2
+ \|\bm{x}^{k+1} - \overline{\bm{x}}^{k+1}\|^2} \\
\leq &\; \kappa\left\|\begin{matrix}
\Delta^{k+1} - \tau_{k}\sigma_{k}^{-1}(\bm{y}^{k+1} - \bm{y}^{k}) \\
\sigma_{k}^{-1}(\bm{x}^{k} - \bm{x}^{k+1})
\end{matrix}\right\|
\;\leq \; \kappa\left(\|\Delta^{k+1}\|
+ \frac{1}{\sigma_{k}}\left\|\begin{matrix}
\tau_{k}(\bm{y}^{k+1} - \bm{y}^{k}) \\
\bm{x}^{k+1} - \bm{x}^{k}
\end{matrix}\right\|\right)  \\
\leq &\; \kappa\left(\|\Delta^{k+1}\|
+ \frac{\sqrt{\overline{\tau}_{k}}}{\sigma_k}
\left\|\begin{matrix}
\sqrt{\tau_k}(\bm{y}^{k+1} - \bm{y}^{k}) \\
\bm{x}^{k+1} - \bm{x}^{k}
\end{matrix}\right\|\right) \\
\leq  &\; \frac{\kappa\left(\sqrt{\rho}+\sqrt{\overline{\tau}_{k}}\right)}{\sigma_{k}}
\left\|\begin{matrix}
\sqrt{\tau_{k}}(\bm{y}^{k+1}-\bm{y}^{k}) \\
\bm{x}^{k+1} - \bm{x}^{k}
\end{matrix}\right\|,
\end{aligned}
\end{equation}
where $\overline{\tau}_{k}$ is defined as $\overline{\tau}_{k}:=\max\left\{1, \tau_{k}\right\}$ and the last inequality follows from \eqref{eq:delta-upbd-xy}. Now, combining \eqref{eq:delta-upbd-xy}, \eqref{eq:upbd-yybark} and \eqref{eq:upbd-ydist}, with some manipulations, we can obtain that
\begin{equation*}
\|\bm{y}^{k+1}-\overline{\bm{y}}^{k}\|\|\Delta^{k+1}\|
\leq \left(\frac{\kappa\sqrt{\tau_{k}}
\left(\rho +\sqrt{\rho\overline{\tau}_{k}}\right)
+\sigma_{k}\sqrt{\rho}}{\sigma_{k}^{2}\sqrt{\tau_{k}}}\right)
\left\|\begin{matrix}
\sqrt{\tau_{k}}(\bm{y}^{k+1}-\bm{y}^{k}) \\
\bm{x}^{k+1} - \bm{x}^{k}
\end{matrix}\right\|^{2}.
\end{equation*}
Then, substituting this inequality into \eqref{eq:succ-xychgbar} yields that
\begin{equation}\label{eq:dist-xybark}
\begin{aligned}
&\; \left\|\begin{matrix}
\sqrt{\tau_{k}}(\bm{y}^{k}-\overline{\bm{y}}^{k}) \\
\bm{x}^{k} - \overline{\bm{x}}^{k}
\end{matrix} \right\|^{2}
-
\left\|\begin{matrix}
\sqrt{\tau_{k}}(\bm{y}^{k+1}-\overline{\bm{y}}^{k+1}) \\
\bm{x}^{k+1} - \overline{\bm{x}}^{k+1}
\end{matrix}\right\|^{2} \\
\geq &\; \left(1 - \frac{2\kappa\sqrt{\tau_{k}}\left(\rho+\sqrt{\rho\overline{\tau}_{k}}\right)
+2\sigma_{k}\sqrt{\rho}}{\sigma_{k}\sqrt{\tau_{k}}}\right)
\left\|\begin{matrix}
\sqrt{\tau_{k}}(\bm{y}^{k+1}-{\bm{y}}^{k}) \\
\bm{x}^{k+1} - {\bm{x}}^{k}
\end{matrix}\right\|^{2}.
\end{aligned}
\end{equation}

\vspace{2mm}
\textbf{Step III.} In the following, we will establish the convergence rate based on \eqref{eq:dist-xybark}. First, by recalling the conditions on $\{\tau_{k}\}$: $\tau_{k}\geq\tau_{\min}>0$ and $\tau_{k+1}\leq(1+\nu_{k})\tau_{k}$ with $\nu_{k}\geq0$ and $\sum_{k=0}^{\infty}\nu_{k}<+\infty$, we know that there exists $\tau_{\max}:=\left(\prod_{k=0}^{\infty}(1+\nu_{k})\right)\tau_{0}$ such that $0<\tau_{\min} \leq \tau_{k} \leq \tau_{\max} < +\infty$ for all $k\geq 0$. This together with condition \eqref{para-conds} implies that there exists a positive integer $k_{0}$ such that
\begin{equation*}
\sqrt{\tau_{k}} - 2\sqrt{\rho} > 0
\quad \text{and} \quad
\sigma_{k} > c\cdot\frac{2\kappa\sqrt{\tau_{k}}\left(\rho+\sqrt{\rho\overline{\tau}_{k}}\right)}{\sqrt{\tau_{k}} - 2\sqrt{\rho}}, \quad \forall\,k \geq k_{0},
\end{equation*}
where $c>1$. Hence, one can verify that
\begin{equation}\label{para-condsre}
\left(1 - \frac{2\kappa\sqrt{\tau_{k}}\left(\rho+\sqrt{\rho\overline{\tau}_{k}}\right)
+2\sigma_{k}\sqrt{\rho}}{\sigma_{k}\sqrt{\tau_{k}}}\right)
> \widetilde{c}:=\Big(\frac{c-1}{c}\Big)\cdot\frac{\sqrt{\tau_{\min}} - 2\sqrt{\rho}}{\sqrt{\tau_{\min}}}
> 0,
\quad \forall\,k \geq k_{0},
\end{equation}
which means that the factor in the right-hand side of \eqref{eq:dist-xybark} will be positive when $k\geq k_{0}$. On the other hand, using \eqref{eq:upbd-ydist} again, we deduce that
\begin{equation*}
\begin{aligned}
\left\|\begin{matrix}
\sqrt{\tau_{k}}(\bm{y}^{k+1}-\bm{y}^{k}) \\
\bm{x}^{k+1} - \bm{x}^{k}
\end{matrix}\right\|^{2}
&\geq \frac{\sigma_{k}^{2}}{\kappa^2\left(\sqrt{\rho}+\sqrt{\overline{\tau}_{k}}\right)^2}
\left\|\begin{matrix}
\bm{y}^{k+1}-\overline{\bm{y}}^{k+1} \\
\bm{x}^{k+1} - \overline{\bm{x}}^{k+1}
\end{matrix}\right\|^{2}  \\
&\geq
\frac{\sigma_{k}^{2}}{\kappa^2\left(\sqrt{\rho}+\sqrt{\overline{\tau}_{k}}\right)^2\overline{\tau}_{k}}
\left\|\begin{matrix}
\sqrt{\tau_{k}}(\bm{y}^{k+1}-\overline{\bm{y}}^{k+1}) \\
\bm{x}^{k+1} - \overline{\bm{x}}^{k+1}
\end{matrix}\right\|^{2}.
\end{aligned}
\end{equation*}
This, together with \eqref{eq:dist-xybark} and \eqref{para-condsre}, yields that
\begin{equation}\label{eq:linear_rate1}
\left\|\begin{matrix}
\sqrt{\tau_{k}}(\bm{y}^{k}-\overline{\bm{y}}^{k}) \\
\bm{x}^{k} - \overline{\bm{x}}^{k}
\end{matrix}\right\|^{2}
\geq \left(1 + \gamma_{k} \right)\left\|\begin{matrix}
\sqrt{\tau_{k}}(\bm{y}^{k+1}-\overline{\bm{y}}^{k+1}) \\
\bm{x}^{k+1} - \overline{\bm{x}}^{k+1}
\end{matrix} \right\|^{2}, \quad \forall \,k \geq k_{0},
\end{equation}
where
\begin{equation}\label{gammalbd}
\begin{aligned}
\gamma_{k} \,:=&\, \left(1 - \frac{2\kappa\sqrt{\tau_{k}}\left(\rho+\sqrt{\rho\overline{\tau}_{k}}\right)
+2\sigma_{k}\sqrt{\rho}}{\sigma_{k}\sqrt{\tau_{k}}}\right)
\frac{\sigma_{k}^{2}}{\kappa^2\left(\sqrt{\rho}+\sqrt{\overline{\tau}_{k}}\right)^2\overline{\tau}_{k}} \\
\geq \,&\, \widetilde{c}\cdot\frac{\sigma_{k}^{2}}{\kappa^2\left(\sqrt{\rho}+\sqrt{\overline{\tau}_{\max}}\right)^2\overline{\tau}_{\max}} \\
\geq \,&\, \gamma_{\min}:= \frac{\widetilde{c}\,\sigma_{\min}^{2}}{\kappa^2\left(\sqrt{\rho}+\sqrt{\overline{\tau}_{\max}}\right)^2\overline{\tau}_{\max}} > 0, \quad \forall\,k \geq k_{0}.
\end{aligned}
\end{equation}
Let $\Lambda^{k} := \operatorname{Diag}(\tau_{k}I_{M},I_{N})$. Since $\{\tau_{k}\}$ is bounded away from 0 and satisfies that $(1+\nu_{k})\tau_{k}\geq\tau_{k+1}$, we have that $(1+\nu_{k})\Lambda^{k} \succeq \Lambda^{k+1}\succ 0$. Then, one can obtain from \eqref{eq:linear_rate1} that
\begin{equation*}
(1+\nu_{k})\left\|\begin{matrix}
\bm{y}^{k}-\overline{\bm{y}}^{k} \\
\bm{x}^{k} - \overline{\bm{x}}^{k}
\end{matrix}\right\|_{\Lambda^{k}}^{2}
\geq
\left(1+\gamma_{k}\right)
\left\|\begin{matrix}	
\bm{y}^{k+1}-\overline{\bm{y}}^{k+1} \\
\bm{x}^{k+1} - \overline{\bm{x}}^{k+1}
\end{matrix}\right\|_{\Lambda^{k+1}}^{2},
\end{equation*}
which readily implies that
\begin{equation*}
\mathrm{dist}_{\Lambda^{k+1}}\left((\bm{y}^{k+1},\bm{x}^{k+1}), \,(\partial\ell)^{-1}(\bm{0},\bm{0})\right) \leq \mu_{k}\, \mathrm{dist}_{\Lambda^{k}}\left((\bm{y}^{k},\bm{x}^{k}), \,(\partial\ell)^{-1}(\bm{0},\bm{0})\right),
\end{equation*}
where $\mu_{k}:=\sqrt{\frac{1+\nu_{k}}{1+\gamma_{k}}}$. Since $\nu_{k}\to0$ and $\gamma_{k}\geq\gamma_{\min}> 0$ for all $k \blue{\geq} k_{0}$, one can verify that $\limsup\limits_{k\to\infty}\,\{\mu_{k}\}<1$. Thus, we obtain the desired results in statement (i).

\vspace{2mm}
\textit{Statement (ii)}. Using \eqref{eq:dist-xybark} and \eqref{para-condsre} again, we see that
\begin{equation*}
\widetilde{c}\,\tau_{\min}\|\bm{y}^{k+1}-{\bm{y}}^{k}\|^2
\leq
\widetilde{c}\left\|\begin{matrix}
\sqrt{\tau_{k}}(\bm{y}^{k+1}-{\bm{y}}^{k}) \\
\bm{x}^{k+1} - {\bm{x}}^{k}
\end{matrix}\right\|^{2}
\leq \mathrm{dist}_{\Lambda^{k}}^{\blue{2}}\left((\bm{y}^{k},\bm{x}^{k}), \,(\partial\ell)^{-1}(\bm{0},\bm{0})\right),
\end{equation*}
for any $ k \geq k_{0}$. Using this inequality and the fact that the sequence $\left\{\mathrm{dist}_{\Lambda^{k}}\left((\bm{y}^{k},\bm{x}^{k}), \,(\partial\ell)^{-1}(\bm{0},\bm{0})\right)\right\}$ is asymptotically Q-(super)linear convergent, we can conclude that there exist a positive integer $k_1$, $0<\beta<1$ and $C>0$ such that
\begin{equation*}
\|\bm{y}^{k+1}-{\bm{y}}^{k}\| \leq C \beta^{k}, \quad \forall \,k \geq k_{1},
\end{equation*}
which further implies that $\sum_{k=0}^{\infty}\|\bm{y}^{k+1}-{\bm{y}}^{k}\|<\infty$. Consequently, $\{\bm{y}^k\}$ is a Cauchy sequence and hence convergent. Therefore, the proof is completed.
\end{proof}

\section{A dual ADMM for QROT}\label{sec:ADMM}

In this section, we present the alternating direction method of multipliers (ADMM, see, e.g. \cite{boyd2011distributed,gabay1976dual}) for solving the dual problem \eqref{eq:dualQROT}, which can be reformulated as:
\begin{equation}\label{eq:maindual-admm2}
\begin{aligned}	&\min\limits_{\bm{u}\in\mathbb{R}^m,\,\bm{v}\in\mathbb{R}^n,\,W\in\mathbb{R}^{m\times n}}
\Big\{ f_{\texttt{q}}^{*}(W) - \bm{\alpha}^{\top}\bm{u} - \bm{\beta}^{\top}\bm{v}
\;\mid\;
\bm{u}\bm{1}_{n}^{\top} + \bm{1}_{m}\bm{v}^{\top} = W\Big\}.
\end{aligned}
\end{equation}
Given a penalty parameter $\sigma > 0$, the augmented Lagrangian function of \eqref{eq:maindual-admm2} is
\begin{equation*}
\begin{aligned}
&\; \mathcal{L}_{\sigma}\left(\bm{u},\bm{v},W;X\right)  \\
:= &\; f_{\texttt{q}}^{*}(W)
- \bm{\alpha}^{\top}\bm{u}
- \bm{\beta}^{\top}\bm{v}
+ \big\langle X, \,\bm{u}\bm{1}_{n}^{\top}+\bm{1}_{m}\bm{v}^{\top}-W\big\rangle
+ \frac{\sigma}{2}\big\| \bm{u}\bm{1}_{n}^{\top}
+ \bm{1}_{m}\bm{v}^{\top}-W\big\|_F^{2}.
\end{aligned}
\end{equation*}
Then, the ADMM for solving \eqref{eq:maindual-admm2} can be described as in Algorithm \ref{alg:warm_start_ADMM}.

\begin{algorithm}[ht]
\caption{ADMM for solving \eqref{eq:maindual-admm2}}\label{alg:warm_start_ADMM}
\begin{algorithmic}
\STATE \textbf{Input:} a penalty parameter $\sigma>0$, and initializations $\bm{u}^{0}\in\mathbb{R}^m$, $\bm{v}^{0}\in\mathbb{R}^n$,
		$W^{0}, X^0\in\mathbb{R}^{m\times n}$. Set $k = 0$.
\WHILE{the termination criterion is not met,}
\STATE \textbf{Step 1.} Compute 	
		\begin{equation*}
		\big(\bm{u}^{k+1},\,\bm{v}^{k+1}\big)
        =\arg\min\limits_{\bm{u},\bm{v}}~\mathcal{L}_{\sigma}\big(\bm{u},\bm{v},W^k;X^k\big).
		\end{equation*}
		
\STATE \textbf{Step 2.} Compute
		\begin{equation*}
			W^{k+1} = \arg\min\limits_{W}
			~\mathcal{L}_{\sigma}\big(\bm{u}^{k+1},\bm{v}^{k+1},W;X^k\big).
		\end{equation*}
		
\STATE \textbf{Step 3.} Set $X^{k+1} = X^{k} + \tau\sigma\left(\bm{u}^{k+1}\bm{1}_{n}^{\top}
		+ \bm{1}_{m}(\bm{v}^{k+1})^{\top}
		- W^{k+1}\right)$, where $\tau \in \left(0, \frac{1+\sqrt{5}}{2}\right)$ is the dual step-size that is typically set to $1.618$.
		
\vspace{2mm}
\STATE \textbf{Step 4.} Set $k \leftarrow k+1$ and go to \textbf{Step 1}.
\ENDWHILE
		
\STATE \textbf{Output:} $\big(\bm{u}^{k},\bm{v}^{k},W^{k},X^{k}\big)$.
\end{algorithmic}
\end{algorithm}

Both subproblems in ADMM can be solved efficiently. Specifically, $\big(\bm{u}^{k+1},\,\bm{v}^{k+1}\big)$ can be obtained by solving the following unconstrained convex minimization problem:
\begin{equation}\label{eq:subprobuv_ADMM}	
	\min\limits_{\bm{u},\bm{v}}~
	h_k(\bm{u},\bm{v}) := - \bm{\alpha}^{\top}\bm{u} - \bm{\beta}^{\top}\bm{v} + \frac{\sigma}{2}\left\|\bm{u}\bm{1}_{n}^{\top} + \bm{1}_{m}\bm{v}^{\top} - S^k\right\|_{F}^{2},
\end{equation}
where $S^k:=W^k-\sigma^{-1}X^{k}$. From the first-order optimality conditions of \eqref{eq:subprobuv_ADMM}, we see that solving problem \eqref{eq:subprobuv_ADMM} is equivalent to solving the equation $\nabla h_k(\bm{u},\bm{v}) = \bm{0}$. This, in turn, reduces to solving the following linear system
\begin{numcases}{}
	n\bm{u} + (\bm{1}_{n}^{\top}\bm{v})\bm{1}_{m} = \sigma^{-1}\bm{\alpha} + S^k\bm{1}_{n}, \label{eq:lsAAT_u} \\[3pt]
	(\bm{1}_{m}^{\top}\bm{u})\bm{1}_{n} + m\bm{v} = \sigma^{-1}\bm{\beta} + (S^k)^{\top}\bm{1}_{m}. \label{eq:lsAAT_v}
\end{numcases}
With some algebraic manipulations, it is not difficult to show that
\begin{eqnarray*}
	\bm{u}^{*}(t) &=& \frac{\sigma^{-1}\bm{\alpha} + S^k\bm{1}_{n}}{n} + t\,\bm{1}_{m}, \quad \forall\,t\in\mathbb{R},
	\\
	\bm{v}^{*}(t) &=& \frac{\sigma^{-1}\bm{\beta} + (S^k)^{\top}\bm{1}_{m}}{m} - \frac{\bm{1}_{m}^{\top}\bm{u}^*(t)}{m}\bm{1}_{n}, \quad \forall\,t\in\mathbb{R}.
\end{eqnarray*}
solves the above linear system.
Thus, we obtain a solution pair $(\bm{u}^{*}(t),\bm{v}^{*}(t))$ with any $t\in\mathbb{R}$. It can be routinely shown that $(\bm{u}^{*}(t),\bm{v}^{*}(t))$ satisfies the linear system \eqref{eq:lsAAT_u} and \eqref{eq:lsAAT_v}, and therefore solves problem \eqref{eq:subprobuv_ADMM}. On the other hand, $W^{k+1}$ can be obtained by computing the proximal operator of the function $\sigma^{-1} f_{\texttt{q}}^{*}$, i.e.,
\begin{equation*}
	W^{k+1} ~:= ~\mathtt{prox}_{\sigma^{-1} f_{\texttt{q}}^{*}}\big(Q^{k}\big)  
	=~ \left\{\begin{aligned}
		&C - \Pi_{\mathbb{R}^{m\times n}_{+}} \big(C-Q^{k}\big), && \lambda=0, \\[3pt]
		&Q^k - (1+\lambda\sigma)^{-1}\Pi_{\mathbb{R}^{m\times n}_{+}}(Q^k-C), && \lambda>0,
	\end{aligned}\right.
\end{equation*}
where $Q^k:=\bm{u}^{k+1}\bm{1}_{n}^{\top} + \bm{1}_{m}({\bm{v}}^{k+1})^{\top} + \sigma^{-1}X^{k}$.

\section{An iBPGM for QROT}\label{sec:iBPGM}

In this section, we briefly discuss how to employ an inexact Bregman proximal gradient method (iBPGM) with Sinkhorn's algorithm as a subsolver for solving the QROT problem \eqref{QROTprob}. We refer readers to \cite[Section 5]{yt2023inexact} for more details. Specifically, the iBPGM with the entropy kernel function for solving \eqref{QROTprob} can be given as follows: let $X^0>0$ and $\phi(X):=\sum_{ij}x_{ij}(\log x_{ij}-1)$, at the $k$-th iteration, compute
\begin{equation}\label{iBPGM-subprob}
	\begin{aligned}
		X^{k+1}~\approx~
		&\min\limits_{X} \Big\{ \langle C+\lambda X^k, \,X\rangle + \mu_k\mathcal{D}_{\phi}(X, \,X^k)   \; \mid \;  X\bm{1}_n = \bm{\alpha}, ~~X^{\top}\bm{1}_m = \bm{\beta}
		\Big\},
	\end{aligned}
\end{equation}
where $\mu_k\geq\lambda$ is a positive proximal parameter, and $\mathcal{D}_{\phi}(U, \,V)$ denotes the Bregman distance between $U$ and $V$ associated with the kernel function $\phi$ which is defined as $\mathcal{D}_{\phi}(U, \,V) := \phi(U) - \phi(V) - \langle \nabla\phi(V), \,U - V \rangle$. Problem \eqref{iBPGM-subprob} can be rewritten as
\begin{equation}\label{iBPGM-subprob-reform}
	\min\limits_{X} \Big\{\langle M^k, \,X\rangle + \mu_k\,{\textstyle\sum_{ij}}x_{ij}(\log x_{ij}-1) \;\mid\;   X\bm{1}_n=\bm{\alpha}, ~~ X^{\top}\bm{1}_m = \bm{\beta}\Big\},
\end{equation}
where $M^k:=C + \lambda X^k - \mu_k\log X^k$. Note that problem \eqref{iBPGM-subprob-reform} has the same form as the entropic regularized optimal transport problem and hence can be readily solved by the popular Sinkhorn's algorithm; see \cite[Section 4.2]{pc2019computational} for more details. Specifically, let $\Xi^k:=e^{-M^k/\mu_k}$. Then, given an initial positive vector $\bm{v}^{k,0}$, the iterative scheme is given by
\begin{equation}\label{sinkalg}
	\bm{u}^{k,t} = \bm{\alpha} ./ \big(\Xi^k\bm{v}^{k,t-1}\big), \quad
	\bm{v}^{k,t} = \bm{\beta} ./ \big((\Xi^k)^{\top}\bm{u}^{k,t}\big),\quad \forall\; t\geq 0,
\end{equation}
where `$./$' denotes the entrywise division between two vectors. When a pair $(\bm{u}^{k,t}, \,\bm{v}^{k,t})$ is obtained based on a certain stopping criterion, an approximate solution of \eqref{iBPGM-subprob-reform} (and hence \eqref{iBPGM-subprob}) can be recovered by setting $X^{k,t}:= \mathrm{Diag}(\bm{u}^{k,t})\,\Xi^k\,\mathrm{Diag}(\bm{v}^{k,t})$. Meanwhile, a pair of approximate dual solutions can be recovered by setting $\bm{f}^{k,t}:=\mu_k\log\bm{u}^{k,t}$ and $\bm{g}^{k,t}:=\mu_k\log\bm{v}^{k,t}$. In our experiments, we simply execute Sinkhorn's iteration \eqref{sinkalg} only \textit{once} for each subproblem, and observe that this is sufficient for obtaining a promising initial point for warm-starting our ripALM.

\section{Efficient implementation of ripALM for BPDN}\label{sec:SMW_BPDN}

When applying ripALM to the BPDN problem \eqref{eq-bpdn}, a key component is the efficient implementation of the semismooth Newton ({\sc Ssn}) method for solving the non-smooth equation \eqref{eq:optcond-y}. The main computational task in {\sc Ssn} lies in computing the Newton direction, which requires solving the linear system \eqref{eq:gen_dir_Newton-BPDN}. We next describe how to solve it efficiently.

Let $(\bar{\bm{s}}, \bar{\bm{t}}, \bm{y}) \in \mathbb{R}^n \times \mathbb{R}^m \times \mathbb{R}^m$ and $\sigma>0$ be given. We restate \eqref{eq:gen_dir_Newton-BPDN} as follows
\begin{equation}\label{eq:gen_dir_Newton_2}
H\bm{d} = - \bm{g},
\end{equation}
where $\bm{g}:= \nabla\Phi(\bm{y})$, and $H\in\widehat{\partial}(\nabla\Phi)(\bm{y})$ has the form
\begin{equation*}
H := \sigma\left(DUD^{\top} + V\right) + \frac{\tau}{\sigma}I_{m},
\end{equation*}
with ${U} \in \partial \mathtt{prox}_{\sigma\|\cdot\|_1 }(\bm{u})$, $\bm{u} := \bar{\bm{s}}+\sigma {D}^{\top}\bm{y}$, and $V \in \partial \Pi_{\mathcal{B}_{\widehat{\kappa}}^2}(\bm{v})$, $\bm{v}:= \bar{\bm{t}}-\sigma \bm{y}$. Moreover, the generalized Jacobians $U$ and $V$ are chosen as follows:
\begin{equation*}
\begin{aligned}
U = &\; \text{Diag}(\bm{w})
~~\text{with}~~w_{i} =
\left\{\begin{array}{l}
0, \quad \text{if}~ |u_{i}| \leq \sigma, \\
1, \quad \text {otherwise, }
\end{array}\right.,~~ i = 1,\dots, n, \\
V = &\;\left\{\begin{array}{ll}
I_{m}, & \|\bm{v}\| \leq \widehat{\kappa}, \\
\frac{\widehat{\kappa}}{\|\bm{v}\|}\left(I_{m}-\frac{1}{\|\bm{v}\|^2}\bm{v} \bm{v}^{\top}\right), & \|\bm{v}\| > \widehat{\kappa}.
\end{array}\right.
\end{aligned}
\end{equation*}
Thanks to the special structure of $H$, we can solve the linear system   \eqref{eq:gen_dir_Newton_2} efficiently by considering the following two cases.

\textbf{Case I.} $V = I_{m}$. In this case, the linear system \eqref{eq:gen_dir_Newton_2} reduces to
\begin{equation}\label{eq:gen_dir_Newton_BPDN_1}
\left(DUD^{\top} + \frac{\sigma^2+\tau}{\sigma^2}I_{m}\right)\bm{d}
= -\frac{1}{\sigma}\bm{g}.
\end{equation}
Define the index set $\mathcal{I}:=\left\{i \mid |u_{i}|>\sigma, \,i=1, \ldots, n\right\}$. Taking into account the special $0$-$1$ sparsity structure of $U$, we see that $DUD^{\top}=D_{\mathcal{I}}{D}_{\mathcal{I}}^{\top}$, where $D_{\mathcal{I}} \in \mathbb{R}^{m \times |\mathcal{I}|}$ denotes the submatrix of $D$ formed by the columns indexed by $\mathcal{I}$. Consequently, the linear system \eqref{eq:gen_dir_Newton_BPDN_1} is further simplified as
\begin{equation}\label{eq:gen_dir_Newton_BPDN_1_reduced}
\left(D_{\mathcal{I}}D_{\mathcal{I}}^{\top} + \bar{\gamma}I_{m}\right)\bm{d}
= - \frac{1}{\sigma}\bm{g},
\end{equation}
where $\bar{\gamma}=\sigma^{-2}(\sigma^2+\tau)$. To solve \eqref{eq:gen_dir_Newton_BPDN_1_reduced}, we proceed as follows: When $m\leq|\mathcal{I}|$, we directly factorize the coefficient matrix $D_{\mathcal{I}}D_{\mathcal{I}}^{\top}+\bar{\gamma}I_{m}$, which requires $O(m^3)$ arithmetic operations. Otherwise, if $m>|\mathcal{I}|$, we can utilize the Sherman--Morrison--Woodbury (SMW) formula to improve efficiency. Specifically, the inverse of the coefficient matrix can be computed as
\begin{equation*}
\left(D_{\mathcal{I}}D_{\mathcal{I}}^{\top}+\bar{\gamma}I_{m}\right)^{-1}
= \frac{1}{\bar{\gamma}}\left(I_{m} - D_{\mathcal{I}}\left(\bar{\gamma}I_{|\mathcal{I}|}
+ D_{\mathcal{I}}^{\top}D_{\mathcal{I}}\right)^{-1}D_{\mathcal{I}}^{\top}\right).
\end{equation*}
As a result, it suffices to factorize the smaller matrix $\bar{\gamma}I_{|\mathcal{I}|} + D_{\mathcal{I}}^{\top}D_{\mathcal{I}}$, which requires $O(|\mathcal{I}|^3)$  operations. This would be much cheaper than factorizing a matrix of size $m\times m$, when the diagonal of $U$ is sparse.

\textbf{Case II.} $V=\frac{\widehat{\kappa}}{\|\bm{v}\|}\left(I_{m}-\frac{1}{\|\bm{v}\|^2}\bm{v} \bm{v}^{\top}\right)$. In this case, the linear system \eqref{eq:gen_dir_Newton_2} reduces to
\begin{equation}\label{eq-coe-mat-c2}
\left(B+D_{\mathcal{I}}D_{\mathcal{I}}^{\top}\right)\bm{d}
= -\frac{1}{\sigma}\bm{g}
\quad\text{with}\quad
B:= \left(\frac{\widehat{\kappa}}{\|\bm{v}\|}+\frac{\tau}{\sigma^2}\right)I_{m}
- \frac{\widehat{\kappa}}{\|\bm{v}\|^3}\bm{v}\bm{v}^{\top},
\end{equation}
which is more complicated than \textbf{Case I} due to the presence of the dense matrix $\bm{v}\bm{v}^{\top}$. When $m\leq|\mathcal{I}|$, we again directly factorize the coefficient matrix, which requires $O(m^3)$ operations. Otherwise, if $m>|\mathcal{I}|$, we proceed as follows: We first apply the SMW formula to compute the inverse of the coefficient matrix:
\begin{equation*}
\left(B + D_{\mathcal{I}}D_{\mathcal{I}}^{\top}\right)^{-1}
= B^{-1} - B^{-1}D_{\mathcal{I}}\left(I_{|\mathcal{I}|}
+ D_{\mathcal{I}}^{\top}B^{-1}D_{\mathcal{I}}\right)^{-1}D_{\mathcal{I}}^{\top}{B}^{-1}.
\end{equation*}
Furthermore, the matrix ${B}^{-1}$ can be efficiently computed via the SMW formula as well:
\begin{equation*}
B^{-1} = \left(\frac{\widehat{\kappa}}{\|\bm{v}\|}
+ \frac{\tau}{\sigma^2}\right)^{-1}I_{m}
+ \frac{1}{\bar{\alpha}}\left(\frac{\widehat{\kappa}}{\|\bm{v}\|}
+ \frac{\tau}{\sigma^2}\right)^{-2}\frac{\widehat{\kappa}}{\|\bm{v}\|^3}\bm{v}\bm{v}^{\top},
\end{equation*}
where $\bar{\alpha} := 1 - \frac{\widehat{\kappa}}{\|\bm{v}\|}\left(\frac{\widehat{\kappa}}{\left\|\bm{v}\right\|} + \frac{\tau}{\sigma^2}\right)^{-1} > 0$. Therefore, in this case, it suffices to factorize the matrix $I_{|\mathcal{I}|} + D_{\mathcal{I}}^{\top}{B}^{-1}D_{\mathcal{I}}$, which requires $O(|\mathcal{I}|^3)$ operations.

Finally, we mention that when direct factorization of the above matrices is computationally prohibitive, one may instead apply an iterative method—such as the preconditioned conjugate gradient method—to solve the linear system \eqref{eq:gen_dir_Newton_2}. Such methods can effectively exploit the underlying sparsity and low-rank structures to reduce computational cost.

\section{A dual ADMM for BPDN}\label{sec:BPDN-dadmm}

In this section, we briefly discuss how to apply ADMM for solving the dual problem \eqref{eq:BPDN-dual} of the BPDN problem. To this end, we first reformulate the dual problem as:
\begin{equation}\label{eq:BPDN-dadmm}
\begin{aligned}
\min_{\bm{u},\,\bm{v},\,\bm{y}} \Big\{
 \delta_{\mathcal{B}_1^\infty}(\bm{u})
+ \widehat{\kappa}\|\bm{v}\| - \bm{b}^{\top}\bm{y} \; \mid\;
 D^{\top}\bm{y} - \bm{u} = 0,
~ -\bm{y} - \bm{v} = 0\Big\}.
\end{aligned}
\end{equation}
Given a penalty parameter $\sigma >0$, the augmented Lagrangian function of \eqref{eq:BPDN-dadmm} is
\begin{equation*}
\begin{aligned}
\mathcal{L}_{\sigma}(\bm{u}, \bm{v}, \bm{y}; \bm{s}, \bm{t})
:= & ~ \delta_{\mathcal{B}_1^\infty}(\bm{u}) + \widehat{\kappa}\|\bm{v}\|
- \bm{b}^{\top}\bm{y} + \langle \bm{s}, \,D^{\top}\bm{y}
- \bm{u}\rangle + \langle \bm{t}, \,-\bm{y} - \bm{v}\rangle \\
& + {\textstyle\frac{\sigma}{2}\left\|D^{\top}\bm{y} - \bm{u}\right\|^2
+ \frac{\sigma}{2}\left\|-\bm{y} - \bm{v}\right\|^2}.
\end{aligned}
\end{equation*}
The ADMM for solving \eqref{eq:BPDN-dadmm} is presented in Algorithm~\ref{alg:BPDN-dadmm}. In Step 1, we can obtain the closed-form solution to the subproblem by evaluating the proximal mappings of $\delta_{\mathcal{B}_1^\infty}(\cdot)$ and $\frac{\widehat{\kappa}}{\sigma}\|\cdot\|$. In Step 2, the subproblem reduces to solving a linear system, where the coefficient matrix $I + DD^\top$ is symmetric positive definite and remains unchanged throughout the iterations, and therefore requires factorization only once. For large-scale problems where such a factorization is prohibited, one may also apply the preconditioned conjugate gradient method for solving the linear system.

\begin{algorithm}[ht]
\caption{ADMM for solving \eqref{eq:BPDN-dadmm}}\label{alg:BPDN-dadmm}
\begin{algorithmic}
\STATE \textbf{Input:} a penalty parameter $\sigma>0$, and initializations $\bm{y}^{0}\in\mathbb{R}^{m}$, $\bm{s}^0\in\mathbb{R}^{n}$, $\bm{t}^{0}\in\mathbb{R}^{m}$.
\WHILE{the termination criterion is not met}
\STATE \textbf{Step 1.} Compute
\begin{equation*}
\begin{aligned}
&\;\big(\bm{u}^{k+1},\,\bm{v}^{k+1}\big)
= \arg\min_{\bm{u},\bm{v}}\left\{\mathcal{L}_{\sigma}(\bm{u}, \bm{v},\bm{y}^{k}; \bm{s}^{k}, \bm{t}^{k})\right\} \\[2pt]
\Leftrightarrow~~ &\;
\bm{u}^{k+1} = \Pi_{\mathcal{B}_1^\infty}\left(D^\top\bm{y}^k
+ \sigma^{-1}\bm{s}^k\right),\quad
\bm{v}^{k+1} = \sigma^{-1}\left(\left(\bm{t}^{k} - \sigma\bm{y}^{k}\right)
- \Pi_{{\mathcal{B}_{\widehat{\kappa}}^2}}\left(\bm{t}^{k} - \sigma\bm{y}^{k}\right)\right).
\end{aligned}
\end{equation*}
		
\STATE \textbf{Step 2.} Compute
\begin{equation*}
\begin{aligned}
&\; \bm{y}^{k+1} = \arg\min_{\bm{y}\in\mathbb{R}^{m}} \left\{\mathcal{L}_{\sigma}(\bm{u}^{k+1},\bm{v}^{k+1},\bm{y};\bm{s}^{k},\bm{t}^{k})\right\} \\
\Leftrightarrow~~ &\;
\big(DD^{\top} + I\big)\bm{y}^{k+1}
=\sigma^{-1}\left(\bm{b} - D\big(\bm{s}^{k} - \sigma\bm{u}^{k+1}\big)
+ \big(\bm{t}^{k} - \sigma\bm{v}^{k+1}\big)\right).
\end{aligned}
\end{equation*}
		
\STATE \textbf{Step 3.} Compute
\begin{equation*}
\bm{s}^{k+1} = \bm{s}^{k} + \tau\sigma\big({D}^{\top}\bm{y}^{k+1} - \bm{u}^{k+1}\big),
\quad
\bm{t}^{k+1} = \bm{t}^{k} + \tau\sigma\big(-\bm{y}^{k+1} - \bm{v}^{k+1}\big),
\end{equation*}
where $\tau \in \left(0, \,\frac{1+\sqrt{5}}{2}\right)$ is the dual step-size that is typically set to $1.618$.
\ENDWHILE
		
\STATE \textbf{Output:} $\left(\bm{s}^{k+1},\,\bm{t}^{k+1},\,\bm{y}^{k+1}\right)$.
\end{algorithmic}
\end{algorithm}

\section{Second-order cone programming reformulation for BPDN}\label{sec:BPDN_SOCP}

In this section, we present a second-order cone programming (SOCP) reformulation of the BPDN problem \eqref{eq-bpdn}. Specifically, we introduce an auxiliary nonnegative variable $\bm{r}\in\mathbb{R}^n_+$ to majorize $|\bm{s}|$, i.e., $|\bm{s}|\leq\bm{r}$. Thus, the objective $\|\bm{s}\|_1$ can be replaced by $\bm{1}^{\top}\bm{r}$ together with the linear constraints $-\bm{r}\leq \bm{s} \leq\bm{r}$ and $\bm{r} \geq \bm{0}$. Next, for a given positive integer $d$, we denote the $(d+1)$-dimensional second-order cone as $\mathcal{Q}^{d+1} := \big\{ (s_0, \bm{s})\in\mathbb{R}^{d+1} \mid s_0 \geq \|\bm{s}\|\big\}$. With these notations, the BPDN problem \eqref{eq-bpdn} admits the following SOCP reformulation:
\begin{equation*}
\begin{aligned}
\min_{\bm{s},\,\bm{r}\in\mathbb{R}^{n}} \; \Big\{
\bm{1}^{\top}\bm{r} \; \mid
 \; -\bm{r}\leq \bm{s} \leq\bm{r}, \quad \bm{r} \geq \bm{0},
\quad \big(\widehat{\kappa}, D\bm{s}-\bm{b}\big) \in \mathcal{Q}^{m+1}\Big\}.
\end{aligned}
\end{equation*}

\bibliographystyle{plain}
\bibliography{Ref_ripALM}

\end{document}